\definecolor{DarkGreen}{rgb}{0.2,0.6,0.2}
\newtheorem{dummytheorem}{Dummy-Theorem}[section]
\newcommand{\proofendsign}{$\Box$} % \rule{2mm}{2mm}
\newtheorem{lemma}[dummytheorem]{Lemma}
\newtheorem{theorem}[dummytheorem]{Theorem}
\newtheorem{proposition}[dummytheorem]{Proposition}
\newtheorem{corollary}[dummytheorem]{Corollary}
\newtheorem{definition}[dummytheorem]{Definition}
\newenvironment{proof}{{\noindent \bf Proof }}
 {{\hspace*{\fill}\proofendsign\par\bigskip}}
\newtheorem{remarknorm}[dummytheorem]{Remark}
\newtheorem{examplenorm}[dummytheorem]{Example}
\newcommand{\N}{\mathbb{N}}
\newcommand{\Q}{\mathbb{Q}}
\newcommand{\R}{\mathbb{R}}
\newcommand{\pr}{\mathbb{P}}
\newcommand{\ex}{\mathbb{E}}
\newcommand{\eins}{\mathbbm{1}}
\newcommand{\avatr}{{\rm AV@R}}
\def\eps{\varepsilon}
\newcommand{\OFP}{(\Omega,{\cal F},\mathbb{P})}
\newcommand{\argmax}{\operatornamewithlimits{arg\,max}}
\def\cR{\mathcal R}
\def\cF{\mathcal F}
\def\cL{\mathcal L}
\def\cM{\mathcal M}
\newcommand{\bR}{\mathbb{R}}
\newcommand{\bP}{\mathbb{P}}
\newcommand{\bE}{\mathbb{E}}
\newcommand{\bN}{\mathbb{N}}
\def\ua{\uparrow}
\def\bcswitch{\left\{\renewcommand{\arraystretch}{1.2}\begin{array}{c@{~:~}c}}
\def\ecswitch{\end{array}\right.}
\begin{document}

%%%%%%%%%%%%%%%%%%%%%%%%%%%%%%%%%%%%%%%%%%%%%%%%%%%%%%%%%%%%%%%%
%%%%%%%%%%%%%%%%%%%%%%%%%%%%%%%%%%%%%%%%%%%%%%%%%%%%%%%%%%%%%%%%
%%%%%%%%%%%%%%%%%%%%%%%%%%%%%%%%%%%%%%%%%%%%%%%%%%%%%%%%%%%%%%%%

\title{Domains of weak continuity of statistical functionals with a view toward robust statistics}

\author{
Volker Krätschmer\footnote{Faculty of Mathematics, University of Duisburg--Essen, {\tt volker.kraetschmer@uni-due.de}}
\and {\setcounter{footnote}{3}}Alexander Schied\footnote{Department of Mathematics, University of Mannheim, {\tt schied@uni-mannheim.de}} {\setcounter{footnote}{6}}
\and Henryk Z\"ahle\footnote{Department of Mathematics, Saarland University, {\tt zaehle@math.uni-sb.de}\hfill\break
The second author gratefully acknowledges support by Deutsche Forschungsgemeinschaft through the Research Training Group RTG 1953. The third author gratefully acknowledges support by BMBF through the project HYPERMATH under grant 05M13TSC. }}

\date{}

\maketitle

\begin{abstract}
Many standard estimators such as several maximum likelihood estimators or the empirical estimator for any law-invariant convex risk measure are not (qualitatively) robust in the classical sense. However, these estimators may nevertheless satisfy a weak \cite{Kraetschmeretal2012,Kraetschmeretal2014} or a local \cite{Zaehle2016} robustness property on relevant sets of distributions. One aim of our paper is to identify  sets of local robustness, and to explain the benefit of the knowledge of such sets. For instance, we will be able to demonstrate that many maximum likelihood estimators are robust on their natural parametric domains.  A second aim consists in
extending the general theory of robust estimation to our local framework.
 In particular we provide a corresponding Hampel-type theorem linking local robustness of a plug-in estimator with a certain continuity condition.
\end{abstract}

{\bf Keywords:} $(\psi_k)$-weak topology; w-set; qualitative robustness; Hampel's theorem; maximum likelihood estimator; law-invariant convex risk measure; aggregation robustness; Orlicz space

%%%%%%%%%%%%%%%%%%%%%%%%%%%%%%%%%%%%%%%%%%%%%%%%%%%%%%%%%%%%%%%%
%%%%%%%%%%%%%%%%%%%%%%%%%%%%%%%%%%%%%%%%%%%%%%%%%%%%%%%%%%%%%%%%
%%%%%%%%%%%%%%%%%%%%%%%%%%%%%%%%%%%%%%%%%%%%%%%%%%%%%%%%%%%%%%%%
%%%%%%%%%%%%%%%%%%%%%%%%%%%%%%%%%%%%%%%%%%%%%%%%%%%%%%%%%%%%%%%%
%%%%%%%%%%%%%%%%%%%%%%%%%%%%%%%%%%%%%%%%%%%%%%%%%%%%%%%%%%%%%%%%
%%%%%%%%%%%%%%%%%%%%%%%%%%%%%%%%%%%%%%%%%%%%%%%%%%%%%%%%%%%%%%%%

\newpage

\section{Introduction and problem statement}\label{Introduction}

Recently, in \cite{Zaehle2016} qualitative robustness of plug-in estimators was considered as a local property, i.e., on strict subsets of the natural domain of the corresponding statistical functional, and a respective Hampel-type criterion was proven. The latter says that if the statistical functional is continuous for a certain topology finer than the weak topology, then qualitative robustness holds on every set of distributions on which the relative weak topology coincides with the finer topology. Such sets of distributions were characterized in \cite{Zaehle2016}, but the provided characterization is rather technical and not at all useful for checking the concurrence of the topologies for any given set. The aim of the present paper is to provide more useful characterizations of such sets, and to illustrate their use in the context of qualitative robustness. Compared to \cite{Zaehle2016} we will also allow for more general topologies on sets of distributions which will turn out to increase the flexibility to check qualitative robustness for statistical functionals. As applications, robustness of maximum likelihood estimators and of empirical estimators of law-invariant convex risk measures are studied in detail. In particular we will demonstrate that many maximum likelihood estimators are robust on their natural parametric domains and even on broader sets. A further field of application is quantitative risk management. In recent contributions in this field the property of robustness has been pointed out as an important requirement for risk assessment; see, for instance, \cite{Contetal2010,Embrechtsetal2014,Kraetschmeretal2014}. Again the empirical estimators of well-founded statistical functionals like those associated with law-invariant convex risk measures fail to be robust but might satisfy this property on domains of interest.

To explain our intension more precisely, let $E$ be a Polish space and ${\cal M}_1$ be the set of all Borel probability measures on $E$. Consider the statistical model
\begin{equation}\label{Statistical Model - 10}
    (\Omega,{\cal F},\{\pr^\theta:\theta\in\Theta\}):=(E^\N,{\cal B}(E)^{\otimes\N},\{\pr^\mu:\mu\in{\cal M}\}),
\end{equation}
where ${\cal M}\subseteq{\cal M}_1$ is any set of Borel probability measures on $E$ and
\begin{equation}\label{Statistical Model - 20}
    \pr^\mu:=\mu^{\otimes\N}
\end{equation}
is the infinite product measure of $\mu$. Note that the coordinate projections on $E^\N$ are i.i.d.\ with law $\mu$ under $\pr^\mu$. For every $\boldsymbol{x}=(x_1,x_2,\ldots)\in E^\N$ and $n\in\N$, we define the empirical probability measure
$$
    \widehat m_n(\boldsymbol{x}):=\widehat m_n(x_1,\ldots,x_n):=\frac{1}{n}\sum_{i=1}^n\delta_{x_i}.
$$
Assume that ${\cal M}$ contains the set
$$
    \mathfrak{E}:=\{\widehat m_n(x_1,\ldots,x_n):\,x_1,\ldots,x_n\in E,\, n\in\N\}\\
$$
of all empirical probability measures. Let $(\Sigma,d_\Sigma)$ be a complete and separable metric space and $T:{\cal M}\to\Sigma$ be any map (statistical functional). The empirical probability measure $\widehat m_n$ induces a nonparametric estimator $\widehat T_n:\Omega\rightarrow\Sigma$ for $T(\mu)$ in the statistical model (\ref{Statistical Model - 10}) through
\begin{equation}\label{Def Plug-in Estimator}
    \widehat T_n(\boldsymbol{x}):=T(\widehat m_n(x)),\quad \boldsymbol{x}=(x_1,x_2,\ldots)\in\Omega,
\end{equation}
provided $\widehat T_n$ is $({\cal F},{\cal B}(\Sigma))$-measurable.

The following Definition \ref{definition of qualitative robustness} generalizes Hampel's classical notion of (qualitative) robustness for the sequence $(\widehat T_n)$ as introduced in \cite{Hampel1971}. Recall from Theorem 2.14 in \cite{Huber1981} that the set of all Borel probability measures on $\Sigma$ equipped with the weak topology is Polish and can be metrized by the Prohorov metric $\pi$. Moreover denote by ${\cal O}_{\rm w}$ the weak topology on ${\cal M}_1$.
%{\color{magenta}In folg.\ Def.\ in Anlehnung an \cite{Hampel1971,Cuevas1988} auch ``robust at $\mu$'' eingeführt.}

\begin{definition}\label{definition of qualitative robustness}
For a given set $M\subseteq{\cal M}$ and $\mu\in M$, the sequence of estimators $(\widehat T_n)$ is said to be {\em $M$-robust at $\mu$} if for every $\varepsilon>0$ there exists an open neighborhood $U=U(\mu,\varepsilon;M)$ of $\mu$ for the relative weak topology ${\cal O}_{\rm w}\cap M$ such that
$$%\begin{equation}\label{Robustness - characteristic property - Intro}
    \nu\in U\quad\Longrightarrow\quad \pi(\pr^{\mu}\circ\widehat T_n^{\,-1},\pr^{\nu}\circ\widehat T_n^{\,-1})\le\varepsilon \quad\mbox{for all }n\in\N.
$$%\end{equation}
The sequence $(\widehat T_n)$ is said to be {\em robust on $M$} if it is $M$-robust at every $\mu\in M$.
\end{definition}

In their pioneer work, Hampel \cite{Hampel1971} and Cuevas \cite{Cuevas1988} used (mainly the first part of) Definition \ref{definition of qualitative robustness} with specifically $M={\cal M}={\cal M}_1$ and established several criteria for robustness; cf.\ Theorems 1--2 in \cite{Hampel1971} and Theorems 1--2 in \cite{Cuevas1988}. In the present paper, our focus will be on the second part of Definition \ref{definition of qualitative robustness}, i.e.\ on robustness of $(\widehat T_n)$ on subsets $M$ of ${\cal M}$. In this context the following two criteria are already known for $M={\cal M}$.
\begin{itemize}
    \item[(I)] If $T:{\cal M}\rightarrow\Sigma$ is continuous for the relative weak topology ${\cal O}_{\rm w}\cap{\cal M}$, then $(\widehat T_n)$ is robust on ${\cal M}$.
    \item[(II)] If $(\widehat T_n)$ is weakly consistent and robust on ${\cal M}$, then $T:{\cal M}\rightarrow\Sigma$ is continuous for the relative weak topology ${\cal O}_{\rm w}\cap{\cal M}$.
\end{itemize}
Assertion (I) is a straightforward generalization of Theorem 2 in \cite{Cuevas1988} (where the author assumed ${\cal M}={\cal M}_1$) and assertion (II) is a special case of Theorem 1 in \cite{Cuevas1988}.

Recall that we assumed the set $\mathfrak{E}$ of all empirical probability measures to be contained in ${\cal M}$. As $\mathfrak{E}$ is dense in ${\cal M}_1$ w.r.t.\ the weak topology ${\cal O}_{\rm w}$ (cf.\ Theorem A.38 in \cite{FoellmerSchied2011} reformulated for probability measures), this implies that weak continuity of the map $T:{\cal M}\rightarrow\Sigma$ is a relatively strict requirement. For instance, in the case $E=\R$ the mean functional $T(\mu):=\int x\,\mu(dx)$ is not weakly continuous on $\mathfrak{E}$ (indeed, letting $x_{n,1}:=n$ and $x_{n,i} = 0$ for $i=2,\ldots,n$ and $n\in\N$, the sequence $(\widehat{m}_n(x_{n1},\dots,x_{nn}))_{n\in\N}$ converges to $\delta_{0}$ w.r.t. ${\cal O}_{\rm w}$, but $\int x\,\widehat{m}(x_{n1},\dots,x_{nn})(dx) = 1 \not\to 0 \int x\,\delta_{0}(dx)$). In view of (I)--(II), this simple example indicates that there are only a few relevant statistical functionals $T:{\cal M}\rightarrow\Sigma$ for which the corresponding sequence of estimators $(\widehat T_n)$ is robust on the whole domain ${\cal M}$. Nevertheless, for general statistical functionals one might ask for those subsets $M$ of ${\cal M}$ on which robustness of $(\widehat T_n)$ holds. The following simple example shows that this question can be reasonable.

\begin{examplenorm}\label{example intro exponential distriution}
Let $E=(0,\infty)$ and ${\cal E}$ be the class of all exponential distributions with mean $\theta$ (cf.\ Example \ref{parametric examples for w-sets - gamma}), $\theta\in(0,\infty)$. The unique maximum likelihood estimator for the parameter $\theta$ is known to be $\widehat T_n(\boldsymbol{x})=\overline{\boldsymbol{x}}_n$, where $\overline{\boldsymbol{x}}_n=\frac{1}{n}\sum_{i=1}^nx_i$ for $\boldsymbol{x}=(x_1,x_2,\ldots)$. It can be represented by $\widehat T_n(\boldsymbol{x})=T(\widehat m_n(\boldsymbol{x}))$ for the functional $T(\mu)=\int x\,\mu(dx)$ with domain ${\cal M}=\{\mu\in{\cal M}_1:\int |x|\,\mu(dx)<\infty\}$ and state space $\Sigma=(0,\infty)$. Since $T$ is weakly consistent by the law of large numbers but {\em not} weakly continuous on ${\cal M}$, assertions (I)--(II) imply that the sequence $(\widehat T_n)$ is not robust on ${\cal M}$. However, in Subsection \ref{Exponentialverteilung} we will see that our results yield robustness of $(\widehat T_n)$ on relatively large subsets of ${\cal M}$, in particular on ${\cal E}$. That means that the maximum likelihood estimator is robust at least against small deviations within the underlying parametric set of distributions ${\cal E}$. This statement could not be derived in the conventional theory of robustness. Note that robustness on ${\cal E}$ is of interest if one starts from the premise that both the target distribution $\mu$ and the distribution $\nu$ underlying the observations lie in ${\cal E}$.

In fact the maximum likelihood estimator is even robust against certain deviations out of ${\cal E}$, even though not against arbitrary deviations within the whole domain ${\cal M}$. For instance, at the end of Subsection \ref{Exponentialverteilung} we will see that the maximum likelihood estimator is also robust on the broader class $\Gamma$ of all Gamma distributions (with location parameter $0$). Robustness on $\Gamma\supsetneq{\cal E}$ is of interest if one assumes that the target distribution lies in ${\cal E}$ (so that the maximum likelihood principle is reasonable) but the distribution $\nu$ underlying the observations may lie in the broader class $\Gamma$.
{\hspace*{\fill}$\Diamond$\par\bigskip}
\end{examplenorm}

The issue of robustness on subsets was approached in Section 3.1 of \cite{Zaehle2016}. The latter paper develops further the theory of \cite{Kraetschmeretal2012,Kraetschmeretal2014} and provides the following criteria (i)--(ii), where ${\cal M}$ is assumed to be contained in the set ${\cal M}_1^{\psi}$ of all $\mu\in{\cal M}_1$ with $\int\psi\,d\mu<\infty$ for some given continuous function $\psi:E\to[0,\infty)$. By {\em $\psi$-weak topology} ${\cal O}_{\psi}$ on ${\cal M}_1^{\psi}$ we mean the coarsest topology for which all mappings $\mu\mapsto\int f\,d\mu$, $f\in {\cal C}_{\psi}$ are continuous, where ${\cal C}_{\psi}$ refers to the space of all continuous functions $f:E\rightarrow\R$ for which $\sup_{x\in E}|f(x)/(1+\psi(x))|<\infty$. For $E=\R^d$ and $\psi(x):=\|x\|^p$ with $  p\geq 1$, the set ${\cal M}_1^{\psi}$ is just the set of all Borel probability measures on $E=\R^d$ with finite $p$-th absolute moment and the $\psi$-weak topology is metrizable by the $L^p$-Wasserstein metric; cf.\ Lemma 8.3 in \cite{BickelFreedman1981}.

\begin{itemize}
    \item[(i)] If $T:{\cal M}\rightarrow\Sigma$ is continuous for the relative $\psi$-weak topology ${\cal O}_\psi\cap\cal{\cal M}$, then $(\widehat T_n)$ is robust on every subset $M\subseteq{\cal M}$ with ${\cal O}_\psi\cap M={\cal O}_{\rm w}\cap M$.
    \item[(ii)] If $(\widehat T_n)$ is weakly consistent on ${\cal M}$ and robust on every subset $M\subseteq{\cal M}$ with ${\cal O}_\psi\cap M={\cal O}_{\rm w}\cap M$, then $T:{\cal M}\rightarrow\Sigma$ is continuous for the $\psi$-weak topology ${\cal O}_\psi$.
\end{itemize}

In general the $\psi$-weak topology ${\cal O}_{\psi}$ is finer than the relative weak topology ${\cal O}_{\rm w}\cap{\cal M}_1^{\psi}$, and the two topologies coincide for $\psi\equiv 1$. Thus the criteria (i)--(ii) generalize the criteria (I)--(II). Assertion (i) says that for $\psi$-weakly continuous functionals $T:{\cal M}\rightarrow\Sigma$ the sequence $(\widehat T_n)$ is robust on every subset $M$ of ${\cal M}$ for which the relative $\psi$-weak topology ${\cal O}_\psi\cap M$ and the relative weak topology ${\cal O}_{\rm w}\cap M$ coincide. Lemma 3.4 of \cite{Zaehle2016} provides the following characterization of those subsets $M$ of ${\cal M}_1^\psi$ for which ${\cal O}_\psi\cap M={\cal O}_{\rm w}\cap M$ holds: the latter identity holds if and only if $M$ is locally uniformly $\psi$-integrating in the sense of Definition \ref{Def Uniformly Psi Integrating} below. On the one hand, this characterization is the basis for the proof of the criteria (i)--(ii) and is also relevant for robustness of more general estimators than plug-in estimators as defined in (\ref{Def Plug-in Estimator}); see \cite{Schuhmacheretal2015} for an example. On the other hand, the condition ``locally uniformly $\psi$-integrating'' is rather technical and not at all useful for checking the identity ${\cal O}_\psi\cap M={\cal O}_{\rm w}\cap M$ for any given set $M$. The aim of the present paper is to provide more useful characterizations of those subsets $M$ of ${\cal M}_1^\psi$ for which the identity ${\cal O}_\psi\cap M={\cal O}_{\rm w}\cap M$ holds, and to illustrate their use. For the sake of brevity we will refer to any $M\subseteq{\cal M}_1^\psi$ satisfying the condition ${\cal O}_\psi\cap M={\cal O}_{\rm w}\cap M$ as {\em w-set in ${\cal M}_1^\psi$}.

Theorem \ref{Key Prop} below gives three further equivalent conditions for a set to be a w-set in ${\cal M}_1^\psi$. Based on this theorem, we will obtain in Section~\ref{Sec Examples for locally uniformly intetgarting sets} several specific examples for w-sets in ${\cal M}_1^\psi$ for various choices of $\psi$. Among others, we will investigate popular parametric families of distribution such as normal, Pareto, Gumbel, or Gamma distributions, and also consider sets of distributions derived from Fr\'{e}chet classes of univariate marginal distributions via aggregation operators like the sum. The latter sets of distributions are of particular interest in the context of risk assessment. The results of Section~\ref{Sec Examples for locally uniformly intetgarting sets} together with assertion (i) above (and the results of Section~\ref{Sec Examples for continuous functionals}) in particular justify Example \ref{example intro exponential distriution}. In Section~\ref{Sec Examples for continuous functionals} we will provide examples for $\psi$-weakly continuous functionals $T$; we will study statistical functionals underlying the maximum likelihood method and law-invariant convex risk measures. In Section~\ref{Sec Examples for continuous functionals} we will also discuss the property of robustness of the corresponding plug-in estimators $(\widehat T_n)$ on subsets of $T$'s domain. Sections~\ref{Sec Proofs of main results} and~\ref{Sec Remaining proofs} contain longer proofs of our results.

Finally, note that we will in fact work with a slightly more general topology than ${\cal O}_\psi$, namely with the so-called $(\psi_k)$-weak topology ${\cal O}_{(\psi_k)}$ to be introduced at the beginning of Section \ref{Sec comparison of weak topologies}. This generalization does not have priority, but the respective theory covers some more examples than the theory for the $\psi$-weak topology ${\cal O}_\psi$. In particular, we need to establish a corresponding extension of the criteria (i)--(ii), which can be found in Theorem \ref{Key Prop - Corollary}.

%%%%%%%%%%%%%%%%%%%%%%%%%%%%%%%%%%%%%%%%%%%%%%%%%%%%%%%%%%%%%%%%
%%%%%%%%%%%%%%%%%%%%%%%%%%%%%%%%%%%%%%%%%%%%%%%%%%%%%%%%%%%%%%%%
%%%%%%%%%%%%%%%%%%%%%%%%%%%%%%%%%%%%%%%%%%%%%%%%%%%%%%%%%%%%%%%%
%%%%%%%%%%%%%%%%%%%%%%%%%%%%%%%%%%%%%%%%%%%%%%%%%%%%%%%%%%%%%%%%
%%%%%%%%%%%%%%%%%%%%%%%%%%%%%%%%%%%%%%%%%%%%%%%%%%%%%%%%%%%%%%%%
%%%%%%%%%%%%%%%%%%%%%%%%%%%%%%%%%%%%%%%%%%%%%%%%%%%%%%%%%%%%%%%%

\section{Concurrence of weak and $(\psi_k)$-weak topologies  and applications in robust statistics}\label{Sec comparison of weak topologies}

As before let $E$ be a Polish space and use the notation introduced in Section \ref{Introduction}. Let $(\psi_k)$ be a sequence of {\em gauge functions}, i.e., a sequence of continuous functions $\psi_k:E\rightarrow[0,\infty)$. Let ${\cal C}_{\psi_k}$ be the space of all continuous functions $f:E\rightarrow\R$ for which $\sup_{x\in E}|f(x)/(1+\psi_k(x))|<\infty$. Let ${\cal M}_1^{(\psi_k)}$ be the set of all Borel probability measures $\mu$ on $E$ for which $\int\psi_k\,d\mu<\infty$ for every $k\in\N$.  The {\em $(\psi_k)$-weak topology} ${\cal O}_{(\psi_k)}$ on ${\cal M}_1^{(\psi_k)}$ is defined to be the coarsest topology for which all mappings $\mu\mapsto\int f\,d\mu$, $f\in {\cal C}_{\psi_k}$, $k\in\N$, are continuous. When $\psi_k=\psi$ for all $k\in\N$, we have ${\cal M}_1^{(\psi_k)}={\cal M}_1^\psi$ and ${\cal O}_{(\psi_k)}={\cal O}_\psi$.

\begin{lemma}\label{Charakterisierung psi-schwache Topologie}
The set ${\cal M}_1^{(\psi_k)}$ equipped with the $(\psi_k)$-weak topology is a Polish space. In addition the $(\psi_{k})$-weak topology is metrizable by the metric
\begin{equation}\label{Det psi k metric}
    d_{(\psi_k)}(\mu,\nu) := d_{\rm w}(\mu,\nu) + \sum_{k=1}^\infty 2^{-k}\Big(\Big|\int \psi_k\,d\mu - \int \psi_k\,d\nu\Big|\wedge 1\Big),
\end{equation}
where $d_{\rm w}$  is any metric for the weak topology. Moreover, for every $(\mu_{n})_{n\in\N_0}\subset{\cal M}_1^{(\psi_k)}$ the following statements are equivalent.
\begin{enumerate}
    \item $\mu_{n}\to\mu_{0}$ $(\psi_k)$-weakly.
    \item $\mu_{n}\to\mu_{0}$ weakly and $\int\psi_k\,d\mu_{n}\to\int\psi_k\,d\mu_{0}$ for every $k\in\N$.
\end{enumerate}
\end{lemma}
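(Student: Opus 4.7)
\emph{Plan.} I would establish the sequential characterization (a) $\Leftrightarrow$ (b) first and then bootstrap it into the metrizability and Polish property by embedding $\mathcal{M}_1^{(\psi_k)}$ into a Polish product space.

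The implication (a) $\Rightarrow$ (b) is immediate: $\mathcal{C}_b(E)\subseteq\mathcal{C}_{\psi_k}$ for every $k$ (since $|f(x)|/(1+\psi_k(x))\le\|f\|_\infty$), and $\psi_k\in\mathcal{C}_{\psi_k}$ itself. For (b) $\Rightarrow$ (a), I would fix $k$ and $f\in\mathcal{C}_{\psi_k}$ with $|f|\le C(1+\psi_k)$, pick a continuous truncation $\chi_R:[0,\infty)\to[0,1]$ with $\chi_R=1$ on $[0,R]$ and $\chi_R=0$ outside $[0,R+1]$, and split $f=f\chi_R(\psi_k)+f(1-\chi_R(\psi_k))$. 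The first piece is bounded continuous, so its integrals converge by weak convergence. For the remainder, $(1+\psi_k)\chi_R(\psi_k)$ is bounded continuous, and combined with the assumed convergence of $\int\psi_k\,d\mu_n$ this yields $\int(1+\psi_k)(1-\chi_R(\psi_k))\,d\mu_n\to\int(1+\psi_k)(1-\chi_R(\psi_k))\,d\mu_0$ for each fixed $R$. The right-hand side vanishes as $R\to\infty$ by dominated convergence on the limit measure, giving Vitali-type uniform integrability of $1+\psi_k$ along $(\mu_n)$, and therefore $\int f\,d\mu_n\to\int f\,d\mu_0$.

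For metrizability and Polishness, I would consider the injection $\Phi:\mathcal{M}_1^{(\psi_k)}\to\mathcal{M}_1\times[0,\infty)^{\N}$, $\Phi(\mu):=(\mu,(\int\psi_k\,d\mu)_{k\in\N})$, with the codomain carrying the product of the weak topology and the product topology (so it is Polish), and the natural product metric $d_w(\mu,\nu)+\sum_k 2^{-k}(|a_k-b_k|\wedge 1)$, whose pullback via $\Phi$ is exactly $d_{(\psi_k)}$. The equivalence (a) $\Leftrightarrow$ (b) says $\Phi$ preserves convergent sequences. To upgrade this to an equality of topologies, I would note that the subspace topology on $\Phi(\mathcal{M}_1^{(\psi_k)})$ is metrizable and is (i) coarser than $\mathcal{O}_{(\psi_k)}$ because $\Phi$ is continuous (its coordinate maps lie in $\mathcal{O}_{(\psi_k)}$), and (ii) at least as fine as $\mathcal{O}_{(\psi_k)}$ because sequential continuity of each $\mu\mapsto\int f\,d\mu$, $f\in\mathcal{C}_{\psi_k}$, in the metrizable subspace topology is exactly (b) $\Rightarrow$ (a). Hence $\mathcal{O}_{(\psi_k)}$ coincides with the subspace topology induced by $\Phi$ and is metrized by $d_{(\psi_k)}$.

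Finally, to deduce Polishness, I would show that $\Phi(\mathcal{M}_1^{(\psi_k)})$ is $G_\delta$ in the Polish codomain and invoke Alexandrov's theorem. Since $\mu\mapsto\int\psi_k\,d\mu=\sup_m\int(\psi_k\wedge m)\,d\mu$ is lower semicontinuous for the weak topology, the map $(\mu,(a_j)_j)\mapsto\int\psi_k\,d\mu-a_k$ is LSC on the product, so $\{\int\psi_k\,d\mu\le a_k\}$ is closed and $\{\int\psi_k\,d\mu\ge a_k\}=\bigcap_m\{\int\psi_k\,d\mu>a_k-1/m\}$ is $G_\delta$; intersecting over $k$ shows that $\Phi(\mathcal{M}_1^{(\psi_k)})=\bigcap_k\{\int\psi_k\,d\mu=a_k\}$ is $G_\delta$. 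The main obstacles I expect are the Vitali-type step in (b) $\Rightarrow$ (a), which genuinely requires the convergence of $\int\psi_k\,d\mu_n$ to upgrade weak convergence to uniform integrability of $1+\psi_k$, and the $G_\delta$ argument for Polishness, which is unavoidable since $d_{(\psi_k)}$ itself need not be complete (for instance, $\mu_n=(1-1/n)\delta_0+(1/n)\delta_n$ with $\psi(x)=|x|$ is $d_\psi$-Cauchy but has no limit in $\mathcal{M}_1^\psi$).
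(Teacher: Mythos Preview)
Your argument is correct, and the sequential characterization (a)$\Leftrightarrow$(b) follows essentially the same truncation idea as the paper, though your use of a continuous cutoff $\chi_R(\psi_k)$ is slightly cleaner than the paper's indicator truncation $\eins_{\{\psi_k\le a\}}$, which forces the authors to pick the level $a$ outside the atoms of $\mu_0\circ\psi_k^{-1}$ in order to invoke the portmanteau theorem.

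The genuine divergence is in how Polishness is obtained. The paper does not embed into a product space; instead it first argues that the $(\psi_k)$-weak topology has a countable base (as intersections of basic $\psi_{k_i}$-weak open sets), so that sequences determine the topology and (a)$\Leftrightarrow$(b) immediately yields metrizability by $d_{(\psi_k)}$. For completeness the paper invokes the known fact (from \cite{FoellmerSchied2011}) that each single $\psi_k$-weak topology is Polish with a complete metric $d_k$, and then checks directly that $d(\mu,\nu):=\sum_k 2^{-k}(d_k(\mu,\nu)\wedge 1)$ is a complete separable metric for ${\cal O}_{(\psi_k)}$: a $d$-Cauchy sequence is $d_k$-Cauchy for every $k$, the $\psi_k$-weak limits all agree by the Hausdorff property of the weak topology, and the common limit lies in ${\cal M}_1^{(\psi_k)}$. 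Your route via the embedding $\Phi$ into ${\cal M}_1\times[0,\infty)^{\N}$, the $G_\delta$ description of the image, and Alexandrov's theorem is more self-contained in that it does not presuppose the Polishness of each $\psi_k$-weak topology, and your observation that $d_{(\psi_k)}$ itself need not be complete (your $\mu_n=(1-1/n)\delta_0+(1/n)\delta_n$ example) is a nice complement that the paper leaves implicit. The paper's approach, on the other hand, produces an explicit complete metric without appealing to the abstract $G_\delta$ machinery.
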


In Theorem~\ref{Key Prop} below we will specify those subsets of ${\cal M}_1^{(\psi_{k})}$ on which the relative $(\psi_{k})$-weak topology and the relative weak topology  coincide. We will use the following terminology, which extends Definition 2.12 in \cite{Kraetschmeretal2014} and Definition 3.1 in \cite{Zaehle2016}.

\begin{definition}\label{Def Uniformly Psi Integrating}
A set $M\subseteq{\cal M}_1$ is said to be locally uniformly $(\psi_{k})$-integrating if for every $\mu\in M$, $\varepsilon>0$, and $k\in\N$ there exist  $a>0$ and a weakly open neighborhood $U$ of $\mu$  such that
$$
    \nu\in M\cap U\quad\Longrightarrow\quad \int \psi_{k}\eins_{\{\psi_{k}\ge a\}}\,d\nu\,\le\,\varepsilon.
$$
The set $M$ is said to be uniformly $(\psi_{k})$-integrating if for every $\varepsilon>0$ and $k\in\N$ there exists some $a>0$ such that
$$
    \sup_{\mu\in M}\int \psi_{k}\eins_{\{\psi_{k}\ge a\}}\,d\mu\,\le\,\varepsilon.
$$
If $(\psi_{k})$ consists of a single gauge function, say $\psi$, we shall speak of (locally) uniformly $\psi$-integrating sets instead of (locally) uniformly
$(\psi_{k})$-integrating sets.
\end{definition}

Note that a set $M$ is (locally) uniformly $(\psi_k)$-integrating if and only if it is (locally) uniformly $\psi_k$-integrating for every $k\in\N$. Of course, any uniformly $(\psi_{k})$-integrating set $M$ is also locally uniformly $(\psi_{k})$-integrating, and any locally uniformly $(\psi_{k})$-integrating set $M$ is a subset of ${\cal M}_1^{(\psi_{k})}$. If all $\psi_{k}$ are bounded, then the set ${\cal M}_1$ coincides with ${\cal M}_1^{(\psi_{k})}$ and is uniformly $(\psi_{k})$-integrating.

Let us now turn to the characterization of those subsets of ${\cal M}_1^{(\psi_{k})}$ on which the relative $(\psi_{k})$-weak topology and the relative weak topology coincide. For $\psi$-weak topologies, the equivalence (a)$\Leftrightarrow$(b) in the following theorem is already known from Lemma 3.6 in \cite{Zaehle2016}.

\begin{theorem}\label{Key Prop}
Let $(\psi_{k})$ be any sequence of gauge functions and $M\subseteq{\cal M}_1^{(\psi_{k})}$ be given. Then the following conditions are equivalent:
\begin{enumerate}
    %\item The relative weak and the relative $(\psi_{k})$-weak topologies on $M$ coincide.
    \item ${\cal O}_{(\psi_k)}\cap M={\cal O}_{\rm w}\cap M$.
    \item $M$ is locally uniformly $(\psi_{k})$-integrating.
    \item Every weakly compact subset of $M$ is uniformly $(\psi_{k})$-integrating.
    \item Every sequence in $M$ which converges weakly in $M$ is uniformly $(\psi_{k})$-integrating.
    \item For every sequence $(\mu_n)\subseteq M$ for which $\mu_n$ converges weakly to $\mu_0$ the convergence $\int\psi_k\,d\mu_n\to\int\psi_k\,d\mu_0$ holds for all $k\in\N$.
   \end{enumerate}
\end{theorem}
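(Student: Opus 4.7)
The plan is to establish the cycle $(a)\Rightarrow (e)\Rightarrow (d)\Rightarrow (c)\Rightarrow (b)\Rightarrow(a)$. The return arrow $(b)\Rightarrow (a)$ is already available: Lemma 3.6 of \cite{Zaehle2016} handles the single-gauge case, and this upgrades to a general sequence $(\psi_k)$ by noting that ${\cal O}_{(\psi_k)}$ is the coarsest topology refining each ${\cal O}_{\psi_k}$ and that local uniform $(\psi_k)$-integrability is just local uniform $\psi_k$-integrability for every $k\in\N$. The implication $(a)\Rightarrow (e)$ is then immediate from Lemma \ref{Charakterisierung psi-schwache Topologie}: if the two relative topologies agree on $M$ and $\mu_n\to\mu_0$ weakly in $M$, then $\mu_n\to\mu_0$ in ${\cal O}_{(\psi_k)}$, so $\int\psi_k\,d\mu_n\to\int\psi_k\,d\mu_0$ for every $k$.

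For $(e)\Rightarrow (d)$, I would run a Vitali-type truncation. Given $(\mu_n)\subseteq M$ with $\mu_n\to\mu_0$ weakly in $M$, fix $k$ and $\varepsilon>0$, and choose $a_0>0$ with $\mu_0(\{\psi_k=a_0\})=0$ and $\int(\psi_k-a_0)^+\,d\mu_0$ arbitrarily small. Since $\psi_k\wedge a_0$ is bounded and continuous, weak convergence gives $\int\psi_k\wedge a_0\,d\mu_n\to\int\psi_k\wedge a_0\,d\mu_0$; subtracting this from the moment convergence supplied by $(e)$ yields $\int(\psi_k-a_0)^+\,d\mu_n\to\int(\psi_k-a_0)^+\,d\mu_0$. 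The pointwise estimate $\psi_k\eins_{\{\psi_k\ge 2a_0\}}\le 2(\psi_k-a_0)^+$ then controls the tails of $\psi_k$ under $\mu_n$ uniformly for all $n$ beyond some threshold $N$, and the finitely many remaining members of the sequence are uniformly $\psi_k$-integrating by dominated convergence.

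For $(d)\Rightarrow (c)$, if some weakly compact $K\subseteq M$ failed to be uniformly $\psi_k$-integrating, I would extract $\varepsilon>0$ and $\mu_n\in K$ with $\int\psi_k\eins_{\{\psi_k\ge n\}}\,d\mu_n>\varepsilon$; since the weak topology on ${\cal M}_1$ is metrizable (Prohorov), weak compactness is sequential, so a subsequence $\mu_{n_j}\to\mu_0\in K\subseteq M$ weakly, contradicting $(d)$ because $\int\psi_k\eins_{\{\psi_k\ge a\}}\,d\mu_{n_j}\ge\int\psi_k\eins_{\{\psi_k\ge n_j\}}\,d\mu_{n_j}>\varepsilon$ whenever $a\le n_j$. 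Finally, for $(c)\Rightarrow (b)$ I would argue contrapositively: negating $(b)$ at some $\mu\in M$ for $\varepsilon>0$ and $k\in\N$ and exploiting metrizability of the weak topology, pick $\mu_n\in M$ with Prohorov distance $<1/n$ from $\mu$ satisfying $\int\psi_k\eins_{\{\psi_k\ge n\}}\,d\mu_n>\varepsilon$; the set $\{\mu\}\cup\{\mu_n:n\in\N\}\subseteq M$ is weakly compact and violates $(c)$. The main technical hurdle is the Vitali step in $(e)\Rightarrow (d)$, where one must simultaneously exploit weak convergence (to handle the bounded truncation $\psi_k\wedge a_0$) and the hypothesis $(e)$ (to handle the unbounded residual $(\psi_k-a_0)^+$) in order to upgrade pointwise moment convergence to genuine uniform tail control.
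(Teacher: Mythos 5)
Your proposal is correct, but it routes the one genuinely hard implication differently from the paper. The paper's cycle never uses (b)$\Rightarrow$(a) directly: it proves (b)$\Rightarrow$(c) by a finite-subcover argument, (c)$\Rightarrow$(b) and (c)$\Leftrightarrow$(d) by the same subsequence/contradiction arguments you use, treats (a)$\Leftrightarrow$(e) as immediate from metrizability, and places all the real work in (c)$\Rightarrow$(a), proved self-containedly via two compactness lemmas: Lemma~\ref{psiweakcompactness} (a Prohorov-type characterization of relative $(\psi_k)$-weak compactness, obtained by a Tychonoff product argument over the single-gauge criterion of Corollary A.47 in \cite{FoellmerSchied2011}) and Lemma~\ref{compact lemma multivariate} (relative $(\psi_k)$-weak compactness $\Leftrightarrow$ weak relative compactness plus uniform $(\psi_k)$-integrability), combined with Lemma~\ref{relatively compact w set lemma}. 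You instead outsource the hard direction to Lemma 3.6 of \cite{Zaehle2016} for a single gauge $\psi$ and lift it to sequences via two observations that are both valid: ${\cal O}_{(\psi_k)}$ is the join of the traces of the topologies ${\cal O}_{\psi_k}$ (so if each ${\cal O}_{\psi_k}\cap M={\cal O}_{\rm w}\cap M$, the join of the relative topologies is again ${\cal O}_{\rm w}\cap M$), and local uniform $(\psi_k)$-integrability is exactly local uniform $\psi_k$-integrability for every $k$ --- an equivalence the paper itself records after Definition~\ref{Def Uniformly Psi Integrating}, and the citation is licensed since the paper explicitly acknowledges the single-gauge (a)$\Leftrightarrow$(b) as known from \cite{Zaehle2016}. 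Your (e)$\Rightarrow$(d) Vitali truncation is the one place you supply analytic content absent from the paper (which gets (e) for free from (a)), and it is sound: $\psi_k\wedge a_0$ is bounded continuous, subtraction from the moment convergence in (e) gives $\int(\psi_k-a_0)^+\,d\mu_n\to\int(\psi_k-a_0)^+\,d\mu_0$, and the pointwise bound $\psi_k\eins_{\{\psi_k\ge 2a_0\}}\le 2(\psi_k-a_0)^+$ controls the tails; note only that your requirement $\mu_0(\{\psi_k=a_0\})=0$ is superfluous, since $\psi_k\wedge a_0$ is continuous for every $a_0$, and that your reading of (d)/(e) with the weak limit $\mu_0$ lying in $M$ is the correct one (without it the theorem would be false). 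In sum, your route is shorter and avoids the compactness machinery entirely, at the price of not being self-contained; the paper's heavier lemmas pay for themselves by being reused elsewhere, e.g.\ Lemma~\ref{psiweakcompactness} underlies Proposition~\ref{compactness prop}.
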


\begin{definition}\label{Def w-set}
Let $(\psi_{k})$ be any sequence of gauge functions and $M\subseteq{\cal M}_1^{(\psi_{k})}$. Then $M$ is said to be a w-set in ${\cal M}_1^{(\psi_{k})}$ if condition (a) (and thus each of the equivalent conditions (a)--(e)) in Theorem~\ref{Key Prop} holds.
\end{definition}

\begin{remarknorm}\label{dominance}
%{\color{blue}(i) The union of finitely many w-sets in ${\cal M}_1^{(\psi_{k})}$ is again a w-set in ${\cal M}_1^{(\psi_{k})}$, which can easily be derived from part (d) of Theorem \ref{Key Prop}.} {\color{magenta}Oder übersehe ich etwas?} \green{Ich denke, das passt so.} {\color{red}Das sehe ich noch nicht: z.B. Folge $(\mu_{n})$ in einer w-Menge $M_{1}$ schwach konvergent gegen $\mu$ in einer w-Menge $M_{2}$, aber $\mu\not\in M_{1}$. Was nun? Variation des Problems: Nehmt zwei disjunkte w-Mengen $M_{1}$ sowie $M_{2}$, und nehmt an, es existiert ein $\mu\in M_{1}$, welches au{\ss}erdem im topologischen Abschlu{\ss} von $M_{2}$ bzgl. der schwachen Topologie liegt. Dann k\"onnen wir zwar aus der w-Mengen Eigenschaft von $M_{1}$ eine offene Umgebung $U$ von $\mu$ finden, so dass $M_{1}\cap U$ klein genug ist, aber \"uber $M_{2}\cap U$ wissen wir a priori nichts. Oder?
%Wirklich einfach erscheint mir Anwendung von (a): Seien $M_{1},\dots,M_{r}$ w-Mengen und $G\in {\cal O}_{(\psi_{k})}$. Nach (a) existieren $H_{1},\dots,H_{r}\in {\cal O}_{w}$ mit
%$$
%G\cap M_{i} = H_{i}\cap M_{i} = H_{i}\cap \bigcup_{j=1}^{r}
%$$
%$$
%G \cap \bigcup_{i=1}^{r} M_{i} =  \bigcup_{i=1}^{r} M_{i}\cap G\in {\cal O}
%$$
%}
%(ii)
Let $(\psi_{k})$ and $(\widetilde{\psi}_{k})$ be sequences of gauge functions satisfying $\widetilde{\psi}_{k}\leq\psi_{k}$ pointwise for every $k\in\N$. Then ${\cal M}_1^{(\psi_{k})}\subseteq {\cal M}_1^{(\widetilde{\psi}_{k})},$ and the $(\psi_{k})$-weak topology is finer than the $(\widetilde{\psi}_{k})$-weak topology on ${\cal M}_1^{(\psi_{k})}$. In particular, every w-set in ${\cal M}_1^{(\psi_{k})}$ is also a w-set in ${\cal M}_1^{(\widetilde{\psi}_{k})}$. Moreover, if $\psi_k\equiv 1$ for every $k\in\N$, then every subset of ${\cal M}_1={\cal M}_1^{(\psi_k)}$ is a w-set.
{\hspace*{\fill}$\Diamond$\par\bigskip}
\end{remarknorm}

%\begin{remarknorm}\label{union of w-sets}
%
%{\hspace*{\fill}$\Diamond$\par\bigskip}
%\end{remarknorm}

We obtain the following generalization of Hampel's theorem, where by weak consistency of $(\widehat T_n)$ on ${\cal M}$ we mean that $\lim_{n\to\infty}\pr^\mu[|\widehat T_n-T(\mu)|\ge\eta]=0$ for all $\eta>0$ and $\mu\in {\cal M}$.

\begin{theorem}\label{Key Prop - Corollary}
Let the statistical model $(\Omega,{\cal F},\{\pr^\mu:\mu\in{\cal M}\})$ (with ${\cal M}\subseteq{\cal M}_1^{(\psi_k)}$), the functional $T:{\cal M}\rightarrow\Sigma$, and the sequence of estimators $(\widehat T_n)$ be as introduced in Section~\ref{Introduction}. Then the following two assertions hold:
\begin{itemize}
    \item[{\rm (i)}] If for any w-set $M$ in ${\cal M}_1^{(\psi_k)}$ with $M\subseteq{\cal M}$ the functional $T$ is continuous at every $\mu\in M$ for the relative $(\psi_k)$-weak topology ${\cal O}_{(\psi_k)}\cap{\cal M}$, then $(\widehat T_{n})$ is robust on $M$. In particular, if $T$ is continuous for the relative $(\psi_k)$-weak topology ${\cal O}_{(\psi_k)}\cap{\cal M}$, then $(\widehat T_n)$ is robust on every w-set $M$ in ${\cal M}_1^{(\psi_k)}$ with $M\subseteq{\cal M}$.
    \item[{\rm(ii)}] If $(\widehat T_n)$ is weakly consistent on ${\cal M}$ and robust on every w-set $M$ in ${\cal M}_1^{(\psi_k)}$ with $M\subseteq{\cal M}$, then $T$ is continuous for the relative $(\psi_k)$-weak topology ${\cal O}_{(\psi_k)}\cap{\cal M}$.
\end{itemize}
\end{theorem}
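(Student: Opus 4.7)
For (i), the plan is to first establish a $(\psi_k)$-weak analogue of the classical Hampel/Cuevas theorem (criterion~(I)) and then descend to the relative weak topology on $M$ via the w-set property. The $(\psi_k)$-weak Hampel estimate I aim to prove is: if $T:\mathcal{M}\to\Sigma$ is continuous at $\mu\in\mathcal{M}$ for $\mathcal{O}_{(\psi_k)}\cap\mathcal{M}$, then for every $\varepsilon>0$ there is a $(\psi_k)$-weakly open neighborhood $U'\subseteq\mathcal{M}$ of $\mu$ with
\[
    \pi(\mathbb{P}^\mu\circ\widehat T_n^{-1},\mathbb{P}^\nu\circ\widehat T_n^{-1})\le\varepsilon
    \qquad\text{for all } \nu\in U',\ n\in\mathbb{N}.
\]
The proof parallels the classical one: Varadarajan's theorem together with the SLLN applied to each $\psi_k$ yields $\widehat m_n\to\mu$ almost surely in the $(\psi_k)$-weak topology under $\mathbb{P}^\mu$, and a Strassen-type coupling of $\mu^{\otimes n}$ and $\nu^{\otimes n}$ propagates this to nearby $\nu$ uniformly in $n$, with the explicit metric from Lemma~\ref{Charakterisierung psi-schwache Topologie} controlling the $\psi_k$-integrals. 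Given this, I set $U:=U'\cap M$: since $M$ is a w-set, Theorem~\ref{Key Prop}(a) gives $\mathcal{O}_{(\psi_k)}\cap M=\mathcal{O}_{\rm w}\cap M$, so $U$ is a relatively weakly open neighborhood of $\mu$ in $M$ on which the required Prohorov bound holds, yielding $M$-robustness at $\mu$.

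For (ii), I would run the same logic in reverse via a sequential argument. By Lemma~\ref{Charakterisierung psi-schwache Topologie} the $(\psi_k)$-weak topology is metrizable, so it suffices to prove sequential continuity of $T$. Take $(\mu_m)_{m\in\mathbb{N}_0}\subseteq\mathcal{M}$ with $\mu_m\to\mu_0$ $(\psi_k)$-weakly and set $M:=\{\mu_m:m\in\mathbb{N}_0\}$. The key observation is that $M$ is a w-set via condition~(e) of Theorem~\ref{Key Prop}: any weakly convergent sequence in $M$ either is eventually constant (and then trivially has convergent $\psi_k$-integrals) or admits a subsequence that is a genuine subsequence of $(\mu_m)$, forcing its weak limit to be $\mu_0$, after which the hypothesis delivers convergence of the $\psi_k$-integrals. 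By the robustness assumption, for every $\varepsilon>0$ there is a relatively weakly open neighborhood $U\ni\mu_0$ in $M$ with $\pi(\mathbb{P}^{\mu_0}\circ\widehat T_n^{-1},\mathbb{P}^\nu\circ\widehat T_n^{-1})\le\varepsilon$ for all $\nu\in U$ and $n\in\mathbb{N}$. Since $(\psi_k)$-weak convergence implies weak convergence, $\mu_m\in U$ for large $m$; letting $n\to\infty$ and invoking weak consistency (which yields $\mathbb{P}^\mu\circ\widehat T_n^{-1}\to\delta_{T(\mu)}$ in Prohorov for each $\mu$) gives $\pi(\delta_{T(\mu_0)},\delta_{T(\mu_m)})\le\varepsilon$, and hence $T(\mu_m)\to T(\mu_0)$.

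The main obstacle is the $(\psi_k)$-weak Hampel estimate underpinning step one of~(i), specifically obtaining uniformity in $n$ of the Prohorov bound on the laws of $\widehat T_n$. The coupling must simultaneously preserve weak closeness (standard via Strassen) and closeness of the empirical $\psi_k$-moments $\int\psi_k\,d\widehat m_n$; the latter rests on the $(\psi_k)$-weak closeness of $\mu$ and $\nu$ combined with concentration of the empirical $\psi_k$-averages from the SLLN, but requires careful bookkeeping to turn the asymptotic SLLN statement into a genuinely $n$-uniform estimate. The remaining pieces---the descent to $M$ via Theorem~\ref{Key Prop} in (i), and the sequential/weak-consistency argument in (ii)---are essentially formal once this core estimate is secured.
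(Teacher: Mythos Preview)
Your argument for part (ii) is essentially correct and matches the paper's: one takes a $(\psi_k)$-weakly convergent sequence, observes that it forms a w-set (the paper invokes Lemma~\ref{relatively compact w set lemma}, you use condition~(e) of Theorem~\ref{Key Prop}; both are fine), applies the robustness hypothesis on that w-set, and passes to the limit via weak consistency.

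For part (i), however, there is a genuine gap in the plan. You propose to establish the Hampel-type estimate first on all of $\mathcal{M}$ (for the $(\psi_k)$-weak topology) and only afterwards restrict to $M$. The core of such an estimate is a bound of the form
\[
    \mathbb{P}^\nu\big[d_{(\psi_k)}(\widehat m_n,\nu)\ge\eta\big]\le\varepsilon
    \qquad\text{for all } n\ge n_0,
\]
uniformly over $\nu$ in a $(\psi_k)$-weak neighborhood of $\mu$ in $\mathcal{M}$. The Prohorov part of $d_{(\psi_k)}$ is unproblematic, but controlling $|\int\psi_k\,d\widehat m_n-\int\psi_k\,d\nu|$ uniformly in $n$ and $\nu$ needs more than your tools provide: a Strassen coupling of $\mu^{\otimes n}$ and $\nu^{\otimes n}$ only controls weak closeness of the two empirical measures, not closeness of their (unbounded) $\psi_k$-integrals; and the SLLN for $\psi_k$ is asymptotic, while a Chebychev-type finite-$n$ bound would require $\sup_\nu\mathrm{Var}_\nu(\psi_k)<\infty$, which a $(\psi_k)$-weak ball in $\mathcal{M}$ does \emph{not} furnish. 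For instance, with $\psi(x)=|x|$ the measures $\nu_a=(1-c/a)\delta_0+(c/a)\delta_a$ lie in any prescribed $d_\psi$-ball around $\delta_0$ for small $c$, yet $\mathrm{Var}_{\nu_a}(\psi)\asymp a\to\infty$.

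The paper does not attempt this global estimate. Instead it uses the w-set property \emph{inside} the concentration argument: by Theorem~\ref{Key Prop}(b), $M$ is locally uniformly $(\psi_k)$-integrating, so one can choose a truncation level $a_k$ with $\int\psi_k\eins_{\{\psi_k\ge a_k\}}\,d\nu$ uniformly small over a neighborhood in $M$; Markov handles the tail, and Chebychev applied to the bounded function $\psi_k\eins_{\{\psi_k<a_k\}}$ then yields the uniform-in-$\nu$, uniform-in-$n\ge n_0$ bound. Thus the w-set hypothesis is not merely used at the end to pass from $\mathcal{O}_{(\psi_k)}\cap M$ to $\mathcal{O}_{\rm w}\cap M$; it is what makes the concentration step go through at all. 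You should also note that the large-$n$ argument leaves finitely many $n<n_0$ uncovered; the paper handles these separately via the continuity of the finite-sample map $(x_1,\dots,x_n)\mapsto T(\widehat m_n(x_1,\dots,x_n))$ together with weak continuity of $\mu\mapsto\mu^{\otimes n}$.
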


In the case where $\psi_k=\psi$ for all $k\in\N$, Theorem~\ref{Key Prop - Corollary} is already known from Theorem 3.8 in \cite{Zaehle2016}; previous versions of this result were obtained in \cite{Kraetschmeretal2012,Kraetschmeretal2014}. %\sout{There, considerations are restricted to $(\psi_{k})$-weak topologies with identical gauge functions $\psi_{k}$.}
The extension in terms of the general notion of $(\psi_{k})$-weak topology is motivated by the example of maximum likelihood estimation which will be studied in Subsection \ref{Continuityminimumcontrastfunctionals}. In particular, in order to establish local robustness for the maximum likelihood estimator of the scale parameter of Gumbel distributions the possibility to use nonconstant sequences of gauge functions will prove to be convenient; cf.\ Subsection \ref{Gumbelverteilung} below. Likewise, it will turn out that the full generality of Theorem \ref{Key Prop - Corollary} is useful when investigating local robustness of certain law-invariant convex risk measures; cf.\ the discussion subsequent to Corollary \ref{StetigkeitRisikofunktional cor}.

In many situations the functional $T:{\cal M}\rightarrow\Sigma$ can be shown to be $(\psi_k)$-weakly continuous on the whole domain ${\cal M}$. In some cases, however, it is beneficial that in condition (i) of Theorem \ref{Key Prop - Corollary} we only require continuity of $T$ at every point of $M$. To give an example, let $E=\R$, $\psi_k=\psi\equiv1$ (hence ${\cal M}_1^{(\psi_k)}={\cal M}_1$), $\alpha\in(0,1)$, and $T:{\cal M}_1\rightarrow\R$ be the functional that assigns to a Borel probability measure its (lower) $\alpha$-quantile. This (quantile) functional $T$ is ($\psi$-) weakly continuous at every point of the set $M$ of all Borel probability measures with a unique $\alpha$-quantile, but not weakly continuous on ${\cal M}_1$.

%%%%%%%%%%%%%%%%%%%%%%%%%%%%%%%%%%%%%%%%%%%%%%%%%%%%%%%%%%%%%%%%
%%%%%%%%%%%%%%%%%%%%%%%%%%%%%%%%%%%%%%%%%%%%%%%%%%%%%%%%%%%%%%%%
%%%%%%%%%%%%%%%%%%%%%%%%%%%%%%%%%%%%%%%%%%%%%%%%%%%%%%%%%%%%%%%%
%%%%%%%%%%%%%%%%%%%%%%%%%%%%%%%%%%%%%%%%%%%%%%%%%%%%%%%%%%%%%%%%
%%%%%%%%%%%%%%%%%%%%%%%%%%%%%%%%%%%%%%%%%%%%%%%%%%%%%%%%%%%%%%%%
%%%%%%%%%%%%%%%%%%%%%%%%%%%%%%%%%%%%%%%%%%%%%%%%%%%%%%%%%%%%%%%%

\section{Examples of w-sets in ${\cal M}_1^{(\psi_k)}$}\label{Sec Examples for locally uniformly intetgarting sets} %locally uniformly $(\psi_k)$-integrating sets}

The following lemma provides a simple but general class of w-sets in ${\cal M}_1^{(\psi_k)}$.

\begin{lemma}\label{relatively compact w set lemma}
Every set $M\subseteq{\cal M}_1^{(\psi_k)}$ that is relatively compact for the $(\psi_k)$-weak topology is a w-set in ${\cal M}_1^{(\psi_k)}$.
\end{lemma}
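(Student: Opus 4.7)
My plan is to use the characterization (a)$\Leftrightarrow$(e) from Theorem~\ref{Key Prop}: it suffices to show that whenever $(\mu_n)\subseteq M$ converges weakly to some $\mu_0$, the convergence $\int\psi_k\,d\mu_n\to\int\psi_k\,d\mu_0$ holds for every $k\in\N$. The key ingredients are (1) that the $(\psi_k)$-weak topology is finer than the weak topology, (2) that by Lemma~\ref{Charakterisierung psi-schwache Topologie}, $(\psi_k)$-weak convergence is equivalent to weak convergence together with the convergence of all the integrals $\int\psi_k\,d\cdot$, and (3) that ${\cal M}_1^{(\psi_k)}$ endowed with the $(\psi_k)$-weak topology is metrizable (even Polish), so that relative compactness of $M$ is equivalent to relative sequential compactness.

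The core of the argument is a standard subsequence--subsubsequence principle. Given $(\mu_n)\subseteq M$ with $\mu_n\to\mu_0$ weakly, suppose for contradiction that there exist $k_0\in\N$, a subsequence $(\mu_{n_j})$, and $\delta>0$ with $\bigl|\int\psi_{k_0}\,d\mu_{n_j}-\int\psi_{k_0}\,d\mu_0\bigr|\ge\delta$ for all $j$. By the relative $(\psi_k)$-weak compactness of $M$, there is a further subsequence $(\mu_{n_{j_\ell}})$ that converges $(\psi_k)$-weakly to some $\nu\in{\cal M}_1^{(\psi_k)}$. Since the $(\psi_k)$-weak topology refines the weak topology, $\mu_{n_{j_\ell}}\to\nu$ also weakly, and uniqueness of weak limits forces $\nu=\mu_0$. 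Lemma~\ref{Charakterisierung psi-schwache Topologie} then yields $\int\psi_{k_0}\,d\mu_{n_{j_\ell}}\to\int\psi_{k_0}\,d\mu_0$, contradicting the choice of $(\mu_{n_j})$. Hence (e) holds.

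I do not foresee any serious obstacles here: Theorem~\ref{Key Prop} does most of the heavy lifting by reducing the claim to the convenient condition (e), and Lemma~\ref{Charakterisierung psi-schwache Topologie} provides the required translation between $(\psi_k)$-weak convergence and the convergence of the gauge integrals. The only subtlety is to invoke the metrizability of the $(\psi_k)$-weak topology so that relative compactness of $M$ really delivers convergent subsequences, and to observe that along the way one obtains for free that the weak limit $\mu_0$ belongs to ${\cal M}_1^{(\psi_k)}$, since it agrees with the $(\psi_k)$-weak subsequential limit~$\nu$.
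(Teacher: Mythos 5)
Your argument is correct and is essentially the paper's own proof: the paper likewise uses metrizability to extract, from relative $(\psi_k)$-weak compactness, a $(\psi_k)$-weakly convergent subsequence of any weakly convergent sequence in $M$, identifies its limit with the weak limit by uniqueness of weak limits, and concludes $(\psi_k)$-weak convergence of the whole sequence --- the same subsequence--subsubsequence device you use, merely without routing through condition (e) of Theorem~\ref{Key Prop}. One caution you should make explicit: the paper's proof of the implication (c)$\Rightarrow$(a) in Theorem~\ref{Key Prop} itself invokes this very lemma, so your appeal to that theorem is legitimate only because the equivalence (a)$\Leftrightarrow$(e) is established independently (directly from metrizability and Lemma~\ref{Charakterisierung psi-schwache Topologie}), which keeps your argument free of circularity.
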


\begin{proof}
It suffices to show that on $M$ weak convergence implies $(\psi_{k})$-weak convergence. So let us suppose that $(\mu_n)$ is a sequence in $M$ that converges weakly to some $\mu\in {\color{cyan}M}$. Then by $(\psi_k)$-weak compactness, every subsequence of $(\mu_n)$ has a subsequence that converges $(\psi_k)$-weakly toward some $\nu\in {\cal M}_1^{(\psi_k)}$. Since $(\psi_k)$-weak convergence implies weak convergence, we must have $\nu=\mu$. It hence follows that $\mu_n\to\mu$ also $(\psi_k)$-weakly which completes the proof.
\end{proof}

The preceding lemma has the following consequence.

\begin{proposition}\label{compactness prop}
Suppose that $(\psi_k)$ is a sequence of gauge functions such that all sets of the form $\{\psi_{k+1}\le n\psi_k\}$ are compact for $k,n\in\mathbb{N}$. Then ${\cal M}_1^{(\psi_k)}$ is $(\psi_k)$-weakly compact and thus itself a w-set.
\end{proposition}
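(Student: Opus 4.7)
The plan is to invoke Lemma~\ref{relatively compact w set lemma}: every space is relatively compact in itself, so once ${\cal M}_1^{(\psi_k)}$ is shown to be $(\psi_k)$-weakly compact, the w-set property follows immediately from that lemma. Hence the entire task reduces to establishing the compactness claim.

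To do so, I would use that $({\cal M}_1^{(\psi_k)},{\cal O}_{(\psi_k)})$ is Polish by Lemma~\ref{Charakterisierung psi-schwache Topologie} and argue via sequential compactness. Given an arbitrary sequence $(\mu_n)\subseteq {\cal M}_1^{(\psi_k)}$, the goal is to extract a $(\psi_k)$-weakly convergent subsequence. The hypothesis that $K_{k,m}:=\{\psi_{k+1}\le m\psi_k\}$ is compact delivers the pointwise domination $\psi_k\le \psi_{k+1}/m$ on $E\setminus K_{k,m}$, which in turn feeds into the Markov-type estimate
\[
    \int\psi_k\,\mathbf{1}_{\{\psi_{k+1}>m\psi_k\}}\,d\mu \;\le\; \frac{1}{m}\int\psi_{k+1}\,d\mu.
\]
This is the precise mechanism by which the hypothesis controls the behavior of $\psi_k$ outside the compact level sets, and it is what will drive both the tightness and the subsequent integrability arguments.

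The remaining steps I have in mind are: (i) a diagonal extraction forcing $\int\psi_k\,d\mu_{n_j}$ to converge in $[0,\infty]$ for each $k\in\N$; (ii) deducing weak tightness of $(\mu_{n_j})$ from the above bound, combined with the compactness of the $K_{k,m}$, and applying Prohorov's theorem to obtain $\mu_{n_j}\to\mu_0$ weakly; (iii) verifying $\mu_0\in{\cal M}_1^{(\psi_k)}$ via Fatou's lemma; and finally (iv) upgrading weak convergence to $(\psi_k)$-weak convergence through the characterization (a)$\Leftrightarrow$(b) of Lemma~\ref{Charakterisierung psi-schwache Topologie}, which reduces matters to checking that $\int\psi_k\,d\mu_{n_j}\to\int\psi_k\,d\mu_0$ for every $k$. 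The latter convergence then follows from uniform $\psi_k$-integrability along the subsequence (a consequence of the Markov estimate above) together with the continuity and boundedness of $\psi_k$ on each compact $K_{k,m}$.

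The main obstacle I anticipate is step (i), i.e.\ guaranteeing that the extracted $\psi_k$-moments converge to \emph{finite} limits. This is the critical point where the hierarchical relation $\psi_{k+1}\gg\psi_k$ off compacts must be exploited to preclude any escape of $\psi_k$-mass to infinity for sequences in ${\cal M}_1^{(\psi_k)}$; the subsequent tightness and integrability manipulations are then fairly routine consequences of the displayed bound and the metric description in Lemma~\ref{Charakterisierung psi-schwache Topologie}.
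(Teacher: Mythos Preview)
The paper's proof is a two-line appeal to Lemma~\ref{psiweakcompactness}: $\mathcal{M}_1^{(\psi_k)}$ is trivially $(\psi_k)$-weakly closed, and relative compactness is claimed via condition~(d) of that lemma with the choice $\phi_k:=\psi_{k+1}$; the w-set conclusion then follows from Lemma~\ref{relatively compact w set lemma} exactly as you propose. Your direct sequential-compactness route therefore amounts to reproving the implication (d)$\Rightarrow$(a) of Lemma~\ref{psiweakcompactness} by hand---your displayed Markov-type estimate is precisely the mechanism behind condition~(c) there---so the two approaches differ mainly in packaging.

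That said, the concern you flag about step~(i) is not merely the ``main obstacle'': it is a genuine gap that cannot be closed from the stated hypothesis alone. Take $E=\mathbb{R}$, $\psi_k(x)=|x|^k$, and $\mu_n=\delta_n$. Then $\{\psi_{k+1}\le n\psi_k\}=[-n,n]$ is compact for all $k,n\in\mathbb{N}$, so the hypothesis of the proposition is met; yet $\int\psi_k\,d\mu_n=n^k\to\infty$ for every $k\ge 1$, and $(\mu_n)$ has no weakly---hence no $(\psi_k)$-weakly---convergent subsequence. So the finiteness of the extracted moment limits you need in step~(i) simply fails, and no amount of ``hierarchical'' control of $\psi_k$ by $\psi_{k+1}$ will rescue it, because your Markov bound $\int\psi_k\eins_{\{\psi_{k+1}>m\psi_k\}}\,d\mu\le m^{-1}\int\psi_{k+1}\,d\mu$ is useless unless the right-hand side is uniformly bounded along the sequence. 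The paper's short proof has the analogous defect: condition~(d) of Lemma~\ref{psiweakcompactness} requires $\sup_{\mu\in\mathcal{M}_1^{(\psi_k)}}\int\psi_{k+1}\,d\mu<\infty$, which is violated in the same example. Your instinct that something more is needed here is correct.
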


\begin{proof}Clearly, ${\cal M}_1^{(\psi_k)}$ is $(\psi_k)$-weakly closed, while relative compactness follows from Lemma \ref{psiweakcompactness} (d) by taking $\phi_k:=\psi_{k+1}$. Lemma \ref{relatively compact w set lemma} finally gives that ${\cal M}_1^{(\psi_k)}$ is a w-set.
\end{proof}

The most interesting case is where $E$ is non-compact. In this case Proposition \ref{compactness prop} is not applicable to constant sequences of gauge functions (i.e.\ $\psi_k=\psi$ for all $k\in\N$), because then the sets $\{\psi\le n\psi\}=E$, $n\in\N$, are not compact. However, as an immediate consequence of condition (e) in Theorem \ref{Key Prop} we obtain the following alternative device.

\begin{proposition}\label{A word on our motivation - remark 20}
Let $\Theta$ be a topological space, and $\mu_\theta$ be an element of ${\cal M}_1^{(\psi_k)}$ for every $\theta\in\Theta$. Then the set ${\cal M}_{\Theta}:=\{\mu_\theta:\theta\in\Theta\}$ is a w-set in ${\cal M}_1^{(\psi_k)}$ if the following two conditions are satisfied:
\begin{enumerate}
    \item For every sequence $(\theta_n)_{n\in\N_0}$ in $\Theta$, weak convergence of $\mu_{\theta_n}$ to $\mu_{\theta_0}$ implies $\theta_n\to\theta_0$.
    \item For every sequence $(\theta_n)_{n\in\N_0}$ in $\Theta$, convergence of $\theta_n$ to $\theta_0$ implies $\int\psi_k\,d\mu_{\theta_n}\to\int\psi_k\,d\mu_{\theta_0}$ for all $k\in\N$.
\end{enumerate}
%{\hspace*{\fill}$\Diamond$\par\bigskip}
\end{proposition}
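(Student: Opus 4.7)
The plan is to verify condition (e) of Theorem~\ref{Key Prop} for $M:={\cal M}_\Theta$, which is the most efficient characterization of w-sets for this setting because it is a purely sequential criterion and both conditions (a) and (b) are sequential.

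More precisely, let $(\mu_n)_{n\in\N}$ be a sequence in ${\cal M}_\Theta$ converging weakly to some $\mu_0\in{\cal M}_\Theta$. By definition of ${\cal M}_\Theta$ we may pick $\theta_n\in\Theta$ with $\mu_n=\mu_{\theta_n}$ for all $n\in\N$ and $\theta_0\in\Theta$ with $\mu_0=\mu_{\theta_0}$. Since $\mu_{\theta_n}\to\mu_{\theta_0}$ weakly, condition~(a) yields $\theta_n\to\theta_0$ in $\Theta$, and condition~(b) then delivers $\int\psi_k\,d\mu_{\theta_n}\to\int\psi_k\,d\mu_{\theta_0}$ for every $k\in\N$. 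This establishes condition~(e) in Theorem~\ref{Key Prop}, so ${\cal M}_\Theta$ is a w-set in ${\cal M}_1^{(\psi_k)}$.

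The only point that requires a moment's thought is the possible non-injectivity of the map $\theta\mapsto\mu_\theta$: the same $\mu_n$ may admit several representing parameters, and similarly for $\mu_0$. However, condition~(a) is formulated for every sequence $(\theta_n)_{n\in\N_0}$ in $\Theta$, so it automatically covers every possible choice of representing parameters; hence the first step above is legitimate. I do not foresee any serious obstacle beyond this observation.
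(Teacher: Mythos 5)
Your proof is correct and follows exactly the paper's route: the paper presents this proposition as an immediate consequence of condition (e) of Theorem~\ref{Key Prop}, which is precisely the sequential criterion you verify. Your added remark on non-injectivity of $\theta\mapsto\mu_\theta$ is a harmless (and accurate) clarification that the paper leaves implicit.
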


%%%%%%%%%%%%%%%%%%%%%%%%%%%%%%%%%%%%%%%%%%%%%%%%%%%%%%%%%%%%%%%%
%%%%%%%%%%%%%%%%%%%%%%%%%%%%%%%%%%%%%%%%%%%%%%%%%%%%%%%%%%%%%%%%
%%%%%%%%%%%%%%%%%%%%%%%%%%%%%%%%%%%%%%%%%%%%%%%%%%%%%%%%%%%%%%%%

\subsection{Parametric classes of distributions}\label{Sec Examples for locally uniformly intetgarting sets - param} %locally uniformly $(\psi_k)$-integrating sets}

In this section, we consider a few examples in which parametric classes of probability distributions belong to ${\cal M}_1^{(\psi_k)}$ for suitably chosen sequences $(\psi_k)$ satisfying the hypotheses of Proposition \ref{compactness prop} or Proposition \ref{A word on our motivation - remark 20}. Note that in view of Remark \ref{dominance}, the assertions in the following examples can be reformulated for many other (coarser) topologies; see the second part of Example \ref{parametric examples for w-sets - gamma} for an illustration.

\begin{examplenorm}\label{parametric examples for w-sets - normal}
%{\bf (Multivariate normal distributions).}
Let $E=\R^d$ equipped with the euclidean norm $\|\cdot\|$ and $\mathcal{N}$ be the class of all $d$-dimensional normal distributions $N(m,\Sigma)$, where $m\in\R^d$ and $\Sigma$ is a semidefinite $d\times d$ covariance matrix. If we let $\psi_k(x):=\exp(\lambda_k{ \|x\|}^{\alpha_k})$, where $\lambda_k\uparrow\infty$ and $\alpha_k\uparrow 2$, we have $\mathcal{N}\subseteq{\cal M}_1^{(\psi_k)}$, and Proposition \ref{compactness prop} yields that $\mathcal{N}$ is a w-set in ${\cal M}_1^{(\psi_k)}$.
{\hspace*{\fill}$\Diamond$\par\bigskip}
\end{examplenorm}

\begin{examplenorm}\label{parametric examples for w-sets - pareto}
For fixed parameters $\alpha>0$ and $x_{\min}>0$, let $\mathcal{P}_{\alpha,x_{\min}}$ be the class of type-1 Pareto distributions with shape parameter $a\ge\alpha$. That is, $\mathcal{P}_{\alpha,x_{\min}}$ consists of all Borel probability measures on $\mathbb{R}$ with Lebesgue density
$$
    f_a(x)=\frac{a}{x_{\min}}\Big(\frac{x_{\min}}{x}\Big)^{a+1}\eins_{[x_{\min},\infty)}(x)
$$
for some $a\ge\alpha$. If we let $\psi_k(x):=|x|^{p_k}$, where $p_k>0$ and $p_k\uparrow\alpha$, we have $\mathcal{P}_{\alpha,x_m}\subseteq{\cal M}_1^{(\psi_k)}$, and Proposition \ref{compactness prop} yields that $\mathcal{P}_{\alpha,x_{\min}}$ is a w-set in ${\cal M}_1^{(\psi_k)}$.
{\hspace*{\fill}$\Diamond$\par\bigskip}
\end{examplenorm}

\begin{examplenorm}\label{parametric examples for w-sets - gamma}
Let $\Gamma$ denote the class of all Gamma distributions with location parameter $0$. That is, $\Gamma$ is the class of all Borel probability measures on $(0,\infty)$
with Lebesgue density
$$
    f_{\kappa,\theta}(x)=\frac{x^{\kappa-1}e^{-x/\theta}}{\theta^\kappa\,\Gamma(k)} %\,\eins_{(0,\infty)}(x)
$$
for some $\theta,\kappa>0$. When taking $\psi_k(x):=x^k$ or $\psi_k(x):=e^{\lambda_k x^{\beta_k}}$ where $\lambda_k\uparrow\infty$ and $\beta_k\uparrow 1$, we have $\Gamma\subseteq{\cal M}_1^{(\psi_k)}$, and Proposition \ref{compactness prop} yields that $\Gamma$ is a w-set in ${\cal M}_1^{(\psi_k)}$.

When the parameter $\kappa$ is fixed and set to $1$, the Lebesgue density $f_{\kappa,\theta}$ simplifies to the Lebesgue density
$$
    f_{\theta}(x):=f_{1,\theta}(x)=\frac{e^{-x/\theta}}{\theta}  %\,\eins_{(0,\infty)}(x)
$$
of the exponential distribution to the parameter $\theta>0$, and the corresponding class of all exponential distributions will be denoted by ${\cal E}$. Again by Proposition \ref{compactness prop} we obtain that ${\cal E}$ is a w-set in ${\cal M}_1^{(\psi_k)}$ for the sequences $(\psi_k)$ of gauge functions mentioned above. In Subsection \ref{Exponentialverteilung} we will consider the single gauge function $\psi(x):=x$ which is dominated by $\psi_k(x):=e^{\lambda_k x^{\beta_k}}$ (with $\lambda_k,\beta_k$ as above) for every $k$. Thus ${\cal E}\subseteq{\cal M}_1^{(\psi_k)}\subseteq{\cal M}_1^{\psi}$, and by Remark \ref{dominance}
%{\color{red}mit Remark \ref{dominance} k\"onnen wir nur folgern, dass ${\cal E}$ eine w-Menge in ${\cal M}_1^{(\psi_{k})}$ ist} {\color{magenta}Wieso? Das verstehe ich nicht ganz.} {\color{cyan}by a routine application of Proposition \ref{A word on our motivation - remark 20}}
we obtain that ${\cal E}$ is also a w-set in ${\cal M}_1^{\psi}$.
%means of Proposition \ref{A word on our motivation - remark 20} we obtain that ${\cal E}$ is a w-set in ${\cal M}_1^{\psi}$. Indeed, if for some sequence $(\theta_{n})$ in $(0,\infty)$ the corresponding sequence $(\mu_{\theta_{n}})$ in ${\cal E}$ converges weakly to some $\mu_{\theta_0}\in {\cal E}$, then obviously $\theta_{n}\to\theta_{0}$ and $|x|f_{\theta_{n}}(x)\to |x|f_{\theta_{0}}(x)$ for every $x\in\R$. In addition $\theta_{n}\leq 2\theta_{0}$ for sufficient large $n$ so that for sufficient large $n$ the inequality $f_{\theta_{n}}(x)\leq f(x)$ holds for all $x\in\R$, where $f(x)$ is defined to take the values $0$, $2\theta_0$, $f_{2\theta_0}(x)$ according to whether $x<0$, $0\le x\le 2\theta_0$, or $x>2\theta_0$. Since ${\cal E}\subseteq\cM_1^{\psi}$, we have $\int |x| f(x)\,dx<\infty$, and then an application of the dominance convergence theorem yields $\int\psi\,d\mu_{\theta_{n}} = \int |x| f_{\theta_{n}}(x)\,dx\rightarrow\int |x| f_{\theta_{0}}(x)\,dx = \int\psi\,d\mu$. Hence conditions (a) and (b) of Proposition \ref{A word on our motivation - remark 20} are shown.
{\hspace*{\fill}$\Diamond$\par\bigskip}
\end{examplenorm}

\begin{examplenorm}\label{parametric examples for w-sets - gumbel}
Let $\mathcal{G}$ denote the class of all Gumbel distributions, i.e., the class of all Borel probability measures ${\rm G}_{a}$ on $\mathbb{R}$ with Lebesgue density
$$
    f_a(x)=ae^{-ax-e^{-ax}}
$$
for some $a>0$. By letting $\psi_k(x):=|x|^k$ or $\psi_k(x):=e^{\lambda_k|x|^{\beta_k}}$ where $\lambda_k\uparrow\infty$ and $\beta_k\uparrow 1$, we obtain that $\mathcal{G}\subseteq{\cal M}_1^{(\psi_k)}$, and Proposition \ref{compactness prop} yields that $\mathcal{G}$ is a w-set in ${\cal M}_1^{(\psi_k)}$.

In Subsection \ref{Gumbelverteilung} we will consider the gauge functions $\psi_k(x):=|\log a_k - a_kx - e^{-a_kx}|$ for some sequence $(a_{k})$ representing $(0,\infty)\cap\Q$. Since for $a > 0$ the moment generating function of ${\rm G}_{a}$ is well defined on $(-\infty,1/a)$ enclosing $0$, the integrals $\int\widetilde a|x|e^{-ax -e^{-ax}}dx$ and $\int e^{-\widetilde a x}  e^{-ax -e^{-ax}}dx$ are finite for any $a,\widetilde a> 0$, and thus ${\rm G}_{a}\in\cM_1^{(\psi_{k})}$ for all $a>0$. We now verify conditions (a)--(b) of Proposition~\ref{A word on our motivation - remark 20} to show that $\mathcal{G}$ is also a w-set in $\cM_1^{(\psi_{k})}$ for this choice of gauge functions.

(a): Let $({\rm G}_{a_n})_{n\in\N}$ be any sequence in ${\cal G}$ which weakly converges to some distribution ${\rm G}_{a_0}\in {\cal G}$. Then corresponding sequence $(F_{a_{n}})_{n\in\N_{0}}$ of distribution functions satisfies
$$
    e^{-e^{-a_{n} x}} = F_{a_{n}}(x) \longrightarrow  F_{a}(x) = e^{-e^{-ax}}\quad\mbox{for all }x\in\R.
$$
Thus necessarily $a_n\to a_0$.

(b): Let $(a_n)_{n\in\N}$ be a sequence in $(0,\infty)$ which converges to some $a_0\in(0,\infty)$. Set $\underline{a}:=\inf_{n\in\N}a_n$ and $\overline{a}:=\sup_{n\in\N}a_n$, and note that $\underline{a}>0$ and $\overline{a}<\infty$. For any $x\in\R$, the mapping $a\mapsto - ax - e^{-ax}$ is nonincreasing on $(0,\infty)$. Thus
$$
    \sup_{n\in\N}\,\psi_k(x)\,a_ne^{-a_{n}x -e^{-a_{n}x}}\le\,\psi_k(x)\,\overline{a}\,e^{-\underline{a}x -e^{-\underline{a}x}}\quad\mbox{ for every }x\in\R.
$$
Hence by ${\rm G}_{\overline{a}}\in\cM_1^{(\psi_{k})}$, we may apply the dominated convergence theorem to conclude that for every $k\in\N$,
\begin{eqnarray*}
    \lefteqn{\int\psi_k(x)\,{\rm G}_{a_n}(dx)\,=\,\int \psi_k(x)\,a_{n}e^{-a_{n}x -e^{-a_{n}x}}\, dx}\\
    & \longrightarrow & \int\psi_k(x)\,a_{0}e^{-a_{0}x -e^{-a_{0}x}}\, dx\,=\,\int\psi_k(x)\,{\rm G}_{a_0}(dx).
\end{eqnarray*}

As we have shown that conditions (a)--(b) of Proposition~\ref{A word on our motivation - remark 20} are satisfies, the proposition ensures that $\mathcal{G}$ is indeed a w-set in $\cM_1^{(\psi_{k})}$.
{\hspace*{\fill}$\Diamond$\par\bigskip}
\end{examplenorm}

%%%%%%%%%%%%%%%%%%%%%%%%%%%%%%%%%%%%%%%%%%%%%%%%%%%%%%%%%%%%%%%%
%%%%%%%%%%%%%%%%%%%%%%%%%%%%%%%%%%%%%%%%%%%%%%%%%%%%%%%%%%%%%%%%
%%%%%%%%%%%%%%%%%%%%%%%%%%%%%%%%%%%%%%%%%%%%%%%%%%%%%%%%%%%%%%%%

\subsection{Images of Fr\'{e}chet classes}\label{aggregationrobustness}

In this section we consider certain classes $M$ that consist of the image measures of the probability measures in a given Fr\'{e}chet class w.r.t.\ fixed one-dimensional marginal distributions. The motivation for studying this examples is the notion of aggregation robustness recently introduced by Embrechts et al.\ \cite{Embrechtsetal2014}; see also Subsection \ref{ContinuityRiskfunctionals} below.

Let  $E=\R^{r}$ be endowed with the Euclidean norm $\|\cdot\|$. For given Borel probability measures $\mu_{1},\dots,\mu_{d}$ on $\R$ denote by $\boldsymbol{{\cal M}}(d;\mu_{1},\dots,\mu_{d})$ the set of all Borel probability measures on $\R^d$ whose one-dimensional marginal distributions are  $\mu_{1},\dots,\mu_{d}$. The set $\boldsymbol{{\cal M}}(d;\mu_{1},\dots,\mu_{d})$ is sometimes called {\em Fr\'{e}chet class} associated with $\mu_1,\ldots,\mu_d$. For any Borel measurable map $A_{d}:\R^d\to\R^{r}$ let
\begin{equation}\label{Def M A d}
    M=M(\mu_{1},\dots,\mu_{d};A_{d}):=\big\{\boldsymbol{\mu}\circ A_{d}^{-1}:\,\boldsymbol{\mu}\in\boldsymbol{{\cal M}}(d;\mu_{1},\dots,\mu_{d})\big\}\subseteq{\cal M}_1
\end{equation}
be the class of the images under $A_{d}$ of all probability measures from $\boldsymbol{{\cal M}}(d;\mu_{1},\dots,\mu_{d})$. In the following Example \ref{Example aggregation maps} we give a few examples of maps $A_d$ we have in mind. All these maps are Lipschitz continuous and thus satisfy condition (a) of Proposition \ref{Aggregationsrobustheit} ahead. This proposition will provide a general criterion for determining whether $M(\mu_{1},\dots,\mu_{d};A_{d})$ is a w-set.

\begin{examplenorm}\label{Example aggregation maps}
A simple but relevant example for a map $A_d:\R^d\rightarrow\R^d$ is the identity map:
\begin{itemize}
    \item [(i)] $A_{d}(\boldsymbol{x}):=\boldsymbol{x}$.
\end{itemize}
In this case the set $M(\mu_{1},\dots,\mu_{d};A_{d})$ defined by (\ref{Def M A d}) is nothing but the Fr\'{e}chet class $\boldsymbol{{\cal M}}(d;\mu_{1},\dots,\mu_{d})$ itself. Examples for maps $A_{d}:\R^{d}\rightarrow\R$ that are relevant in risk management and insurance are given by
\begin{itemize}
    \item [(ii)] $A_{d}(x_{1},\dots,x_{d}) := \sum_{i=1}^{d}x_{i}$,
    \item [(iii)] $A_{d}(x_{1},\dots,x_{d}) := \max\{x_{1},\dots,x_{d}\}$,
    \item [(iv)] $A_{d}(x_{1},\dots,x_{d}) := \sum_{i=1}^{d}(x_{i} - t_{i})^{+}$ for thresholds $t_{1},\dots,t_{d} > 0$,
    \item [(v)] $A_{d}(x_{1},\dots,x_{d}) := (\sum_{i=1}^{d}x_{i} - t)^{+}$ for a threshold $t > 0$;
\end{itemize}
see, for instance, \cite[p.\,248]{McNeiletal2005}. For an application of (ii) see Subsection~\ref{ContinuityRiskfunctionals} below.
{\hspace*{\fill}$\Diamond$\par\bigskip}
\end{examplenorm}

Let us now turn over to our main criterion to check whether the set $M$ defined in \eqref{Def M A d} is a w-set in ${\cal M}_1^{(\widetilde\psi_k)}$. Here we use the notation $\widetilde\psi_k(\cdot):=\psi_k(\|\cdot\|)$. Note that $\widetilde\psi_k=\psi_k$, $k\in\N$, when $r=1$.

\begin{proposition}\label{Aggregationsrobustheit}
Let $A_d:\R^d\to\R^r$ be any Borel measurable map. Let $(\psi_k)$ be any sequence of gauge functions on $\R$ that are all convex and even, and consider the sequence of gauge functions $(\widetilde\psi_k)$ on $\R^r$ with $\widetilde\psi_k$ as above. Further assume that the following assertions hold:
\begin{enumerate}
    \item There exist constants $b,c>0$ such that $\|A_d(\boldsymbol{x})\|\le b+c\sum_{i=1}^d|x_i|$ for all $\boldsymbol{x}=(x_1,\ldots,x_d)\in\R^d$.
    \item For every $k\in\N$ there exist $\ell_k\in\N$ and $c_k>0$ such that $\psi_k((d+1)cx)\le c_k\psi_{\ell_k}(x)$ for all $x\in\R$, where the constant $c$ is given by (a).
    \item $\mu_{1},\dots,\mu_{d}\in\cM_{1}^{(\psi_k)}$.
%    \item $N$ is uniformly $(\psi_k)$-integrating.
\end{enumerate}
Then $M$ defined by (\ref{Def M A d}) is uniformly $(\widetilde\psi_k)$-integrating, and thus it is a w-set in ${\cal M}_1^{(\widetilde\psi_k)}$.
\end{proposition}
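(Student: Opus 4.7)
The plan is to show that $M$ is uniformly $(\widetilde\psi_k)$-integrating; by Theorem~\ref{Key Prop} this simultaneously gives the second conclusion that $M$ is a w-set in $\cM_1^{(\widetilde\psi_k)}$. The key idea is to dominate $\psi_k(\|A_d(\cdot)\|)$ on $\R^d$ by a function whose integral against any $\boldsymbol\mu\in\boldsymbol{\cM}(d;\mu_1,\dots,\mu_d)$ depends only on the prescribed marginals $\mu_1,\dots,\mu_d$, and then to verify uniform integrability for that dominating function.

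For the dominator: since $\psi_k$ is convex and even it is nondecreasing on $[0,\infty)$, so assumption (a) gives $\psi_k(\|A_d(\boldsymbol x)\|)\le\psi_k(b+c\sum_{i=1}^d|x_i|)$. Writing the argument as the convex combination $\tfrac{1}{d+1}\big((d+1)b+\sum_i(d+1)c|x_i|\big)$ and applying Jensen yields
$$
\psi_k(\|A_d(\boldsymbol x)\|)\le\tfrac{\psi_k((d+1)b)}{d+1}+\tfrac{1}{d+1}\sum_{i=1}^d\psi_k((d+1)c|x_i|),
$$
and (b) combined with evenness of $\psi_{\ell_k}$ bounds each summand by $c_k\psi_{\ell_k}(x_i)$. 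Hence
$$
\psi_k(\|A_d(\boldsymbol x)\|)\le\Phi_k(\boldsymbol x):=A_k+B_k\,\Psi_k(\boldsymbol x),\qquad\Psi_k(\boldsymbol x):=\sum_{i=1}^d\psi_{\ell_k}(x_i),
$$
with explicit constants $A_k,B_k>0$. By (c), $\int\Phi_k\,d\boldsymbol\mu=A_k+B_k\sum_i\int\psi_{\ell_k}\,d\mu_i$ is finite and, crucially, does not depend on the coupling $\boldsymbol\mu\in\boldsymbol{\cM}(d;\mu_1,\dots,\mu_d)$. Since $\psi_k(\|A_d\|)\le\Phi_k$ (so $\{\psi_k(\|A_d\|)\ge a\}\subseteq\{\Phi_k\ge a\}$), the change of variables gives
$$
\int\widetilde\psi_k\eins_{\{\widetilde\psi_k\ge a\}}\,d(\boldsymbol\mu\circ A_d^{-1})\le\int\Phi_k\eins_{\{\Phi_k\ge a\}}\,d\boldsymbol\mu,
$$
so $M\subseteq\cM_1^{(\widetilde\psi_k)}$ and the assertion reduces to uniform $\Phi_k$-integrability, equivalently uniform integrability of $\Psi_k$, across $\boldsymbol\mu\in\boldsymbol{\cM}(d;\mu_1,\dots,\mu_d)$.

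For the latter, given $\varepsilon>0$ I would first pick $s>0$ large enough that $\int\psi_{\ell_k}\eins_{\{\psi_{\ell_k}>s\}}\,d\mu_i<\varepsilon/d$ for every $i=1,\dots,d$ (possible by (c) and finiteness of the family). The pointwise decomposition $\psi_{\ell_k}(x_i)\le s+\psi_{\ell_k}(x_i)\eins_{\{\psi_{\ell_k}(x_i)>s\}}$ yields $\Psi_k\le ds+\sum_i\psi_{\ell_k}(x_i)\eins_{\{\psi_{\ell_k}(x_i)>s\}}$, hence for every $t>0$
$$
\int\Psi_k\eins_{\{\Psi_k\ge t\}}\,d\boldsymbol\mu\le ds\cdot\boldsymbol\mu(\Psi_k\ge t)+\sum_{i=1}^d\int\psi_{\ell_k}\eins_{\{\psi_{\ell_k}>s\}}\,d\mu_i.
$$
The second sum is below $\varepsilon$ by the choice of $s$, while Markov together with the identity $\int\Psi_k\,d\boldsymbol\mu=\sum_i\int\psi_{\ell_k}\,d\mu_i$ (independent of $\boldsymbol\mu$) makes $ds\cdot\boldsymbol\mu(\Psi_k\ge t)$ fall below $\varepsilon$ once $t$ is large, uniformly in $\boldsymbol\mu$. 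Combining this with the analogous Markov bound for $A_k\boldsymbol\mu(\Phi_k\ge a)$ delivers $\sup_{\boldsymbol\mu}\int\Phi_k\eins_{\{\Phi_k\ge a\}}\,d\boldsymbol\mu\to0$ as $a\to\infty$, which is the required uniform $(\widetilde\psi_k)$-integrability of $M$.

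The main obstacle I anticipate is that the level set $\{\Psi_k\ge t\}$ genuinely couples the coordinates, so any straightforward reduction to single-marginal integrals a priori produces cross-terms that depend on the joint law $\boldsymbol\mu$; the role of the two-piece truncation at level $s$ is precisely to replace those cross-terms by either (i) the deterministic constant $s$ (absorbed into a Markov bound on $\boldsymbol\mu(\Psi_k\ge t)$ that uses only $\sum_i\int\psi_{\ell_k}\,d\mu_i$) or (ii) a tail integral of $\psi_{\ell_k}$ against the individual marginal $\mu_i$, so that every bound in sight is controlled uniformly over $\boldsymbol{\cM}(d;\mu_1,\dots,\mu_d)$.
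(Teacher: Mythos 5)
Your proof is correct, and while the domination step is the same as the paper's (monotonicity of $\psi_k$ on $[0,\infty)$ from convexity and evenness, assumption (a), Jensen applied to the convex combination $\frac{1}{d+1}\bigl((d+1)b+\sum_{i=1}^d(d+1)c|x_i|\bigr)$, and assumption (b) to pass to $\psi_{\ell_k}$), the mechanism by which you extract uniform integrability is genuinely different. The paper invokes the de la Vall\'ee-Poussin theorem twice: first, since $\{\mu_1\circ f_{d,c}^{-1},\dots,\mu_d\circ f_{d,c}^{-1}\}$ with $f_{d,c}(x):=(d+1)cx$ is a finite subset of $\cM_1^{(\psi_k)}$ and hence uniformly $(\psi_k)$-integrating, it produces a convex increasing $h_k$ with $h_k(x)/x\to\infty$ and $\max_i\int h_k(\psi_k)\,d\mu_i'<\infty$; it then runs your Jensen chain on the composition $h_k\circ\psi_k$ (checking that this composition is still convex, which needs $h_k$ convex and increasing) to get $\sup_{\nu\in M}\int h_k(\widetilde\psi_k)\,d\nu<\infty$, and applies de la Vall\'ee-Poussin a second time to conclude. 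You instead run the Jensen chain on $\psi_k$ itself and prove uniform integrability of the dominator $\Phi_k=A_k+B_k\sum_i\psi_{\ell_k}(x_i)$ by hand, via the two-piece truncation at level $s$ plus a Markov bound on $\boldsymbol\mu(\Psi_k\ge t)$ whose right-hand side $\frac1t\sum_i\int\psi_{\ell_k}\,d\mu_i$ is coupling-free. Your route buys elementarity and transparency: every bound visibly depends only on the marginals, and you avoid both the auxiliary function $h_k$ and the slightly delicate convexity-of-composition point. The paper's route buys brevity once de la Vall\'ee-Poussin is on the table, reducing the whole uniform-integrability question to a single moment bound. Both correctly finish via Theorem~\ref{Key Prop} (uniform implies locally uniform $(\widetilde\psi_k)$-integrating, hence w-set), and your intermediate observations (the pointwise implication $0\le f\le g\Rightarrow f\eins_{\{f\ge a\}}\le g\eins_{\{g\ge a\}}$ used in the change of variables, and the choice of $s$ by dominated convergence for the finite family of marginals) are all sound.
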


%%%%%%%%%%%%%%%%%%%%%%%%%%%%%%%%%%%%%%%%%%%%%%%%%%%%%%%%%%%%%%%%
%%%%%%%%%%%%%%%%%%%%%%%%%%%%%%%%%%%%%%%%%%%%%%%%%%%%%%%%%%%%%%%%
%%%%%%%%%%%%%%%%%%%%%%%%%%%%%%%%%%%%%%%%%%%%%%%%%%%%%%%%%%%%%%%%
%%%%%%%%%%%%%%%%%%%%%%%%%%%%%%%%%%%%%%%%%%%%%%%%%%%%%%%%%%%%%%%%
%%%%%%%%%%%%%%%%%%%%%%%%%%%%%%%%%%%%%%%%%%%%%%%%%%%%%%%%%%%%%%%%
%%%%%%%%%%%%%%%%%%%%%%%%%%%%%%%%%%%%%%%%%%%%%%%%%%%%%%%%%%%%%%%%

\section{Examples for $(\psi_k)$-weakly continuous functionals $T$}\label{Sec Examples for continuous functionals}

In this section, we will discuss continuity of some relevant statistical functionals w.r.t.\ the $(\psi_k)$-weak topology for suitable sequences of gauge functions $(\psi_k)$. In Section~\ref{Continuityminimumcontrastfunctionals} we will consider statistical functionals which underly the maximum likelihood estimation principle. In Section~\ref{ContinuityRiskfunctionals} we will consider statistical functionals associated with so-called risk measures. The latter play an important role in quantitative risk management; see, for instance, \cite{McNeiletal2005,Rueschendorf2013}.

%%%%%%%%%%%%%%%%%%%%%%%%%%%%%%%%%%%%%%%%%%%%%%%%%%%%%%%%%%%%%%%%
%%%%%%%%%%%%%%%%%%%%%%%%%%%%%%%%%%%%%%%%%%%%%%%%%%%%%%%%%%%%%%%%
%%%%%%%%%%%%%%%%%%%%%%%%%%%%%%%%%%%%%%%%%%%%%%%%%%%%%%%%%%%%%%%%

\subsection{Maximum likelihood functionals}\label{Continuityminimumcontrastfunctionals}

%{\color{magenta}Die ``Einleitung'' und die alten Abschnitte 4.1.1 und 4.1.2 habe ich wegen dem neunten Spiegelpunkt im Gutachten zusammengefasst. Ebenso habe ich das dominating measure von $\nu$ in $\lambda$ umgenannt, da $\nu$ unten auch als Beobactungsmodell auftritt; siehe Spiegelpunkte 12 und 13 im Gutachen. Ich hoffe, ich habe keine ``Ersetzung'' vergessen.\\}

Let $\Theta\subseteq\R^d$ and $\mu_\theta$ be a Borel probability measure on $E$ for every $\theta\in\Theta$. Let
$$
    (\Omega,{\cal F}):=(E^\N,{\cal B}(E)^{\otimes\N})\quad\mbox{ and }\quad \pr^\theta:=\mu_\theta^{\otimes\N}\mbox{ for every }\theta\in\Theta.
$$
Then $(\Omega,{\cal F},\{\pr^\theta:\theta\in\Theta\})$ is a parametric statistical product model. We will assume that our parametric statistical model is dominated. That is, we assume that there is some $\sigma$-finite measure $\lambda$ on $(E,{\cal B}(E))$ (called the dominating measure) such that for every $\theta\in\Theta$ the law $\mu_\theta$ is absolutely continuous w.r.t.\ $\lambda$ with Radon--Nikodym derivative
$$
    f_\theta\,:=\,\frac{d\mu_\theta}{d\lambda}\,.
$$
In particular, when $X_i$ denotes the $i$-th coordinate projection on $\Omega=E^\N$, the law $\pr^\theta\circ(X_1,\ldots,X_n)^{-1}=\mu_\theta^{\otimes n}$ of a sample of size $n$ is absolutely continuous w.r.t.\ $\lambda^{\otimes n}$ with Radon--Nikodym derivative $d\mu_\theta^{\otimes n}/d\lambda^{\otimes n}$ satisfying
$$
    \frac{d\mu_\theta^{\otimes n}}{d\lambda^{\otimes n}}(x_1,\ldots,x_n)\,=\,\prod_{i=1}^n f_\theta(x_i)=:\,L_n(x_1,\ldots,x_n;\theta)\quad\mbox{for all }x_1,\ldots,x_n\in E
$$
for every $\theta\in\Theta$ and $n\in\N$.

%%%%%%%%%%%%%%%%%%%%%%%%%%%%%%%%%%%%%%%%%%%%%%%%%%%%%%%%%%%%%%%%
%%%%%%%%%%%%%%%%%%%%%%%%%%%%%%%%%%%%%%%%%%%%%%%%%%%%%%%%%%%%%%%%

%\subsubsection{Maximum likelihood estimators and their functional representations}\label{Maximum likelihood estimators and functionals}

By definition, a maximum likelihood estimator $\widehat T_n$ for the parameter $\theta$ based on a sample of size $n$ satisfies
\begin{equation}\label{Def MLE}
    \widehat{T}_{n}(\boldsymbol{x})=\widehat{T}_{n}(x_1,\ldots,x_n)\in\argmax_{\theta\in\Theta} L_n(x_1,\ldots,x_n;\theta)\quad\mbox{ for all }\boldsymbol{x}=(x_1,x_2,\ldots)\in\Omega
\end{equation}
for every $\theta\in\Theta$. Let us assume that $f_\theta>0$ on $E$ for every $\theta\in\Theta$.
Then condition (\ref{Def MLE}) is equivalent to
$$%\begin{equation}\label{Def MLLE}
    \widehat{T}_{n}(\boldsymbol{x})=\widehat{T}_{n}(x_1,\ldots,x_n)\in\argmax_{\theta\in\Theta} \frac{1}{n}\sum_{i=1}^{n}\log f_{\theta}(x_{i})\quad\mbox{ for all }\boldsymbol{x}=(x_1,x_2,\ldots)\in\Omega.
$$%\end{equation}
In particular, if ${\cal M}\subseteq{\cal M}_1$ contains the set $\mathfrak{E}$ of all empirical probability measures and $T:{\cal M}\rightarrow\Theta$ is a functional satisfying
\begin{equation}\label{Def MLF}
    T(\mu)\in\argmax_{\theta\in\Theta} \int \log f_{\theta}\,d\mu\quad\mbox{ for all }\mu\in{\cal M},
\end{equation}
then we can define a maximum likelihood estimator by
\begin{equation}\label{Representation MLE}
    \widehat{T}_{n}(\boldsymbol{x})=\widehat{T}_{n}(x_1,\ldots,x_n)=T(\widehat m_n(x_1,\ldots,x_n))\quad\mbox{ for all }\boldsymbol{x}=(x_1,x_2,\ldots)\in\Omega.
\end{equation}

Inspired by the representation (\ref{Representation MLE}) of the maximum likelihood estimator we introduce the following terminology. For any subset $\cM\subseteq\cM_{1}$ containing $\mathfrak{E}$, a map $T:{\cal M}\rightarrow\Theta$ is called {\em maximum likelihood functional} associated with the statistical model $(\Omega,{\cal F},\{\pr^\theta:\theta\in\Theta\})$ if for every $\mu\in{\cal M}$ the integrals $\int |\log f_{\theta}|\,d\mu$, $\theta\in\Theta$, are finite and (\ref{Def MLF}) holds.

\begin{remarknorm}\label{ML Jensen Remark}
Although it is not crucial for our purposes, note that a maximum likelihood functional $T$ evaluated at $\mu_\theta$ takes the parameter $\theta$ as its value, i.e., $\theta$ provides a maximizer of the mapping $\vartheta\mapsto \int \log f_{\vartheta}\,d\mu_\theta$ on $\Theta$. Moreover, $\theta$ is the unique maximizer as soon as $\mu_\theta\neq\mu_\vartheta$ for all $\vartheta\in\Theta\setminus\{\theta\}$. Indeed, since we assumed $f_\theta>0$ for every $\theta\in\Theta$, the strict concavity of the logarithm and Jensen's inequality yield
\begin{eqnarray*}
    \int \log f_{\vartheta}\,d\mu_\theta - \int \log f_{\theta}\,d\mu_\theta=\int \log\Big(\frac{f_{\vartheta}}{f_{\theta}}\Big)d\mu_\theta <
    \log\Big(\int\frac{f_{\vartheta}}{f_{\theta}}\,d\mu_\theta\Big) = \log(1) = 0
\end{eqnarray*}
for every $\vartheta\in\Theta$ for which $f_\vartheta\neq f_\theta$; in the third step we have used the fact that $f_\vartheta/f_\theta=d\mu_\vartheta/d\mu_\theta$.
{\hspace*{\fill}$\Diamond$\par\bigskip}
\end{remarknorm}

%%%%%%%%%%%%%%%%%%%%%%%%%%%%%%%%%%%%%%%%%%%%%%%%%%%%%%%%%%%%%%%%
%%%%%%%%%%%%%%%%%%%%%%%%%%%%%%%%%%%%%%%%%%%%%%%%%%%%%%%%%%%%%%%%

%\subsubsection{A word on our motivation}\label{A word on our motivation}

In view of (\ref{Representation MLE}) the minimal domain of a maximum likelihood functional $T:{\cal M}\rightarrow\Theta$ is ${\cal M}=\mathfrak{E}$, where as before $\mathfrak{E}$ refers to the set of all empirical probability measures on $E$. In order to apply Theorem~\ref{Key Prop - Corollary} to maximum likelihood estimators, the domain ${\cal M}$ has to be chosen so large that it contains both $\mathfrak{E}$ and the set
$$
    {\cal M}_\Theta:=\{\mu_\theta:\,\theta\in\Theta\}.
$$
Indeed, if $\mathfrak{E}\cup{\cal M}_{\Theta}\subseteq{\cal M}\subseteq{\cal M}_1^{(\psi_k)}$ and a maximum likelihood functional $T:{\cal M}\rightarrow\Theta$ is $(\psi_k)$-weakly continuous for some sequence of gauge functions $(\psi_k)$, then Theorem~\ref{Key Prop - Corollary} shows that the corresponding sequence of maximum likelihood estimators $(\widehat T_n)$ is robust on every w-set $M$ in ${\cal M}_1^{(\psi_k)}$ with $M\subseteq{\cal M}$ (in particular with $M\subseteq {\cal M}_\Theta$). Recall that robustness of $(\widehat T_n)$ on $M$ means that for every $\mu\in M$ and $\varepsilon>0$ there exists an open neighborhood $U=U(\mu,\varepsilon;M)$ of $\mu$ for the relative weak topology ${\cal O}_{\rm w}\cap M$ such that $\pi(\pr^{\mu}\circ\widehat T_n^{\,-1},\pr^{\nu}\circ\widehat T_n^{\,-1})\le\varepsilon$ for all $\nu\in U$ and $n\in\N$.

\begin{remarknorm}\label{A word on our motivation - remark 10}
In many specific situations the set ${\cal M}_\Theta$ itself can be shown to be a w-set in ${\cal M}_1^{(\psi_k)}$; see Examples \ref{parametric examples for w-sets - normal} through \ref{parametric examples for w-sets - gumbel}. In this case the sequence $(\widehat T_n)$ of maxium likelihood estimators is robust on ${\cal M}_\Theta$.
{\hspace*{\fill}$\Diamond$\par\bigskip}
\end{remarknorm}

Note that robustness on ${\cal M}_\Theta$ is of interest if one starts from the premise that both the target distribution $\mu$ and the distribution $\nu$ underlying the observations lie in the parametric class of distributions ${\cal M}_\Theta$. On the other hand, robustness on $M\supsetneq{\cal M}_\Theta$ is of interest if one assumes that the target distribution lies in ${\cal M}_\Theta$ (so that the maximum likelihood principle is reasonable) but the distribution $\nu$ underlying the observations may lie in a broader class $M$ (nested between ${\cal M}_\Theta$ and ${\cal M}$).
\medskip

%%%%%%%%%%%%%%%%%%%%%%%%%%%%%%%%%%%%%%%%%%%%%%%%%%%%%%%%%%%%%%%%
%%%%%%%%%%%%%%%%%%%%%%%%%%%%%%%%%%%%%%%%%%%%%%%%%%%%%%%%%%%%%%%%

\subsubsection{$(\psi_k)$-weak continuity of maximum likelihood functionals}\label{Continuity of maximum likelihood functionals}

Here we are going to investigate a fixed maximum likelihood functional $T:\cM\rightarrow\Theta$ for continuity w.r.t.\ a suitable $(\psi_k)$-weak topology. To this end, let $\Theta$ be an open convex subset of $\R^{d}$ equipped with the Euclidean norm $\|\cdot\|$. We will assume throughout that the following three conditions hold:
\begin{eqnarray}
    & & f_{\theta}(x) > 0\quad\mbox{ for every }\theta\in\Theta\mbox{ and }x\in E. \label{interior1} \\
    & & x\longmapsto \log f_{\theta}(x)\quad \mbox{is continuous for every }\theta\in\Theta. \label{continuity1} \\
    & & \theta\longmapsto \log f_{\theta}(x)\quad \mbox{is concave for every }x\in E. \label{convexity1}
\end{eqnarray}
In general we cannot expect that the maximum likelihood functional $T$ is weakly continuous. On the other hand, it should often be possible to find a sequence of gauge functions $(\psi_k)$ for which $T$ is $(\psi_k)$-weakly continuous. At this abstract level the most obvious candidates for the sequence $(\psi_{k})$ is built upon a sequence $(\log f_{\theta_{k}}(\cdot))$ with $(\theta_{k})$ a sequence in $\Theta$ representing $\Theta\cap\Q^{d}$. For instance, we may and do choose
\begin{equation}\label{Def psi k}
    \psi_k(x):= |\log f_{\theta_k}(x)|,\quad x\in E.
\end{equation}
For every $\mu\in\cM_{1}$ with $\int|\log f_{\theta}|\,d\mu<\infty$ for all $\theta\in\Theta$ we can now define a map ${\cal L}_{\mu}:\Theta\rightarrow\R$ by
$$
    {\cal L}_{\mu}(\theta) := \int \log f_{\theta}\,d\mu.
$$
Note that ${\cal L}_\mu$ is concave by (\ref{convexity1}) and thus continuous. %finite on $\Theta,$ in particular $L_{\mu}|_{\Theta}$ is continuous.
Moreover, $T(\mu)$ is a maximum point of ${\cal L}_{\mu}$ by the characteristic property (\ref{Def MLF}) of the functional $T$.
\medskip

\begin{theorem}\label{contrastfunction}
Let ${\cal M}\subseteq{\cal M}_1$ and $T:{\cal M}\rightarrow\Theta$ be a maximum likelihood functional. Assume that (\ref{interior1})--(\ref{convexity1}) hold, and let the function $\psi_k$ be defined by (\ref{Def psi k}) for every $k\in\N$. Let $\mu\in\cM$ be such that the map ${\cal L}_{\mu}$ has a unique maximum point. Then $T(\mu_{n})\to T(\mu)$ for any sequence $(\mu_{n})\subset\cM$ which satisfies ${\cal L}_{\mu_{n}}(\theta_{k})\to{\cal L}_{\mu}(\theta_{k})$ for every $k\in\N$. In particular, $T$ is continuous at $\mu$ w.r.t.\ the $(\psi_{k})$-weak topology.
\end{theorem}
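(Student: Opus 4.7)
The plan is to deduce the ``in particular'' assertion from the preceding convergence statement and to prove the latter by combining the classical uniform-on-compacta convergence of concave functions with a direct argmax-trapping argument based on uniqueness of the maximizer.

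\emph{Step 1 (reduction of the ``in particular'' part).} First I would observe that, by (\ref{continuity1}), every $\log f_{\theta_k}$ is continuous on $E$, and, by the definition (\ref{Def psi k}), $|\log f_{\theta_k}(x)|/(1+\psi_k(x))\le 1$; hence $\log f_{\theta_k}\in\cC_{\psi_k}$. Consequently, if $(\mu_n)\subset\cM$ converges to $\mu$ in the $(\psi_k)$-weak topology, then
\[
    \cL_{\mu_n}(\theta_k)=\int\log f_{\theta_k}\,d\mu_n\longrightarrow\int\log f_{\theta_k}\,d\mu=\cL_\mu(\theta_k)
\]
for every $k\in\N$. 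Thus the second statement reduces to the first.

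\emph{Step 2 (uniform convergence on compacta).} By (\ref{convexity1}) each $\cL_{\mu_n}$ and $\cL_\mu$ is a finite concave function on the open convex set $\Theta\subseteq\R^d$, hence automatically continuous. Since $\{\theta_k:k\in\N\}=\Theta\cap\Q^d$ is dense in $\Theta$ and $\cL_{\mu_n}\to\cL_\mu$ pointwise on this dense subset by hypothesis, the classical convergence theorem for concave functions (cf.\ Rockafellar, \emph{Convex Analysis}, Theorem~10.8) yields that $\cL_{\mu_n}\to\cL_\mu$ uniformly on every compact subset of $\Theta$.

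\emph{Step 3 (trapping the maximizer).} Set $\theta^\ast:=T(\mu)$ and $\vartheta_n:=T(\mu_n)$; by hypothesis $\theta^\ast$ is the unique maximizer of $\cL_\mu$ on $\Theta$. Fix $r>0$ small enough that $\overline{B}(\theta^\ast,r)\subset\Theta$. By uniqueness of the maximizer and continuity of $\cL_\mu$ on the compact sphere $\partial B(\theta^\ast,r)$,
\[
    \eta:=\cL_\mu(\theta^\ast)-\max_{\theta\in\partial B(\theta^\ast,r)}\cL_\mu(\theta)>0.
\]
Uniform convergence on $\overline{B}(\theta^\ast,r)$ then gives, for all $n$ sufficiently large,
\[
    \cL_{\mu_n}(\theta^\ast)>\max_{\theta\in\partial B(\theta^\ast,r)}\cL_{\mu_n}(\theta).
\]
If one had $\vartheta_n\notin B(\theta^\ast,r)$, the segment $[\theta^\ast,\vartheta_n]$ would meet $\partial B(\theta^\ast,r)$ at some point $\theta'=(1-t)\theta^\ast+t\vartheta_n$ with $t\in(0,1]$; concavity of $\cL_{\mu_n}$ together with $\cL_{\mu_n}(\vartheta_n)\ge\cL_{\mu_n}(\theta^\ast)$ (since $\vartheta_n$ maximizes $\cL_{\mu_n}$) would yield $\cL_{\mu_n}(\theta')\ge\cL_{\mu_n}(\theta^\ast)$, contradicting the previous display. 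Hence $\vartheta_n\in B(\theta^\ast,r)$ for all large $n$, and since $r>0$ was arbitrary, $\vartheta_n\to\theta^\ast$, which is the desired conclusion.

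\emph{Main obstacle.} The delicate point is not the convergence $\cL_{\mu_n}\to\cL_\mu$, which is standard, but preventing the maximizers $\vartheta_n$ from drifting toward $\partial\Theta$ or to infinity: uniform convergence on compacta does not \emph{a priori} control the location of $\vartheta_n$. The key is that uniqueness of $\theta^\ast$ plus concavity of \emph{every} $\cL_{\mu_n}$ confines $\vartheta_n$ to an arbitrarily small ball around $\theta^\ast$, because a concave function that exceeds its value at $\theta^\ast$ outside such a ball must also exceed it on the bounding sphere, which uniform convergence rules out.
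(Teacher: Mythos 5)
Your proof is correct, and while it follows the same overall decomposition as the paper's (convergence of the likelihood maps $\cL_{\mu_n}$, then stability of the maximizer), it replaces the paper's two citations with a partly stronger, partly self-contained argument. The paper upgrades pointwise convergence on the dense set $\{\theta_k\}=\Theta\cap\Q^d$ only to pointwise convergence on all of $\Theta$ (via Corollary 7.18 in Rockafellar--Wets, \emph{Variational Analysis}) and then invokes a black-box stability theorem for convex minimization (Theorem 5.3.25(f) in Kosmol--M\"uller-Wichards) to conclude $T(\mu_n)\to T(\mu)$. You instead extract uniform convergence on compacta from the classical theorem on convex (here concave) functions and then prove the argmax stability yourself by the sphere-trapping argument: uniqueness of $\theta^\ast$ gives a positive gap $\eta$ over $\partial B(\theta^\ast,r)$, uniform convergence transfers a strict gap to $\cL_{\mu_n}$ for large $n$, and concavity of $\cL_{\mu_n}$ together with $\cL_{\mu_n}(\vartheta_n)\ge\cL_{\mu_n}(\theta^\ast)$ forces $\vartheta_n\in B(\theta^\ast,r)$ --- the intermediate point on the segment $[\theta^\ast,\vartheta_n]$ would otherwise violate the gap. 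Your Step 3 correctly isolates the genuine difficulty (maximizers drifting to $\partial\Theta$ or to infinity, which mere pointwise or locally uniform convergence cannot exclude without concavity), and your Step 1 reduction of the ``in particular'' part via $\log f_{\theta_k}\in\cC_{\psi_k}$ is exactly the paper's closing observation. What your route buys is transparency and independence from the optimization-theoretic reference; what the paper's route buys is brevity and a formulation that needs only pointwise convergence of the concave maps, delegating the compactness mechanics to the cited stability theorem.
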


\begin{remarknorm}\label{reduction to single}
Let the gauge functions $\psi_{k}$ be defined as in (\ref{Def psi k}). Furthermore, let $k_0\in\N$ such that for every $k\in\N$ there exist constants $B_k\ge 0$ and $C_k>0$ with $\psi_k\le B_k+C_k\psi_{k_0}$. So $\overline{\psi}_{k}:=B_{k}+C_{k}\psi_{k_{0}}$ defines a new sequence $(\overline{\psi}_{k})$ of gauge functions such that
$\cM_1^{\psi_{k_{0}}} = \cM_1^{(\overline{\psi}_{k})} = \cM_1^{(\psi_{k})}$. Then on the one hand by the genuine definition of $(\psi_{k})$-weak topology, the $(\overline{\psi}_{k})$-weak topology is finer than the $(\psi_{k})$-weak topology. On the other hand by Lemma \ref{Charakterisierung psi-schwache Topologie} the $(\overline{\psi}_{k})$-weak topology is coarser than the $\psi_{k_{0}}$-weak topology. In particular the $(\psi_{k})$-topology coincides with the $\psi_{k_{0}}$-weak topology so that we may replace in Theorem \ref{contrastfunction} the $(\psi_{k})$-weak topology with the $\psi_{k_{0}}$-weak topology.
%This way of reduction of a sequence of gauge functions to a single one of them is not always possible; cf.\ the example in Subsection~\ref{Gumbelverteilung} below.
{\hspace*{\fill}$\Diamond$\par\bigskip}
\end{remarknorm}

We round out Section~\ref{Continuityminimumcontrastfunctionals} with two specific examples illustrating Theorem~\ref{contrastfunction}.

%%%%%%%%%%%%%%%%%%%%%%%%%%%%%%%%%%%%%%%%%%%%%%%%%%%%%%%%%%%%%%%%
%%%%%%%%%%%%%%%%%%%%%%%%%%%%%%%%%%%%%%%%%%%%%%%%%%%%%%%%%%%%%%%%

\subsubsection{Example exponential distribution}\label{Exponentialverteilung}

Let specifically $E=(0,\infty)$, $\Theta=(0,\infty)$ and ${\cal M}_\Theta:=\{{\rm Exp}_{\theta}:\,\theta\in(0,\infty)\}$ be the class of all exponential distributions ${\rm Exp}_{\theta}$  with parameter $\theta$. In this case we have
\begin{equation}\label{Exponentialverteilung - 10}
    \log f_\theta(x)=-\log\theta - x/\theta \quad\mbox{ for all }\theta\in(0,\infty)\mbox{ and }x\in (0,\infty).
\end{equation}
If we choose the sequence of gauge functions $(\psi_k)$ by (\ref{Def psi k}), then we obtain
\begin{equation}\label{Exponentialverteilung - 20}
    \psi_k(x)= |\log f_{\theta_k}(x)|= |-\log \theta_k - x/\theta_k|\quad \mbox{ for all }x\in (0,\infty)\mbox{ and }k\in\N
\end{equation}
for some sequence $(\theta_{k})$ in $\Theta=(0,\infty)$ representing $(0,\infty)\cap\Q$. It can easily be seen from (\ref{Exponentialverteilung - 10}) that conditions (\ref{interior1})--\eqref{convexity1} hold. Since $|\log f_{\theta}|\le B_{\theta}+C_{\theta}\psi_{k_0}$ for $B_{\theta}:=|\log \theta|$, $C_{\theta}:=1/\theta$ and those $k_0$ for which $\theta_{k_0}=1$, we may observe $\cM_{1}^{\psi_{k_{0}}} = \bigcap_{\theta > 0}\cM_{1}^{|\log f_{\theta}|} = \cM_{1}^{(\psi_{k})}$. Moreover, by Remark \ref{reduction to single}, the $(\psi_{k})$-weak topology coincides with the $\psi$-weak topology on $\cM_{1}^{(\psi_{k})} = \cM_{1}^{\psi}$, where %\sout{to consider the single gauge function}}
$\psi:=\psi_{k_0}$, i.e.
\begin{equation}\label{Exponentialverteilung - 15}
    \psi(x):= x, \quad x\in(0,\infty).
\end{equation}
For $\mu\in\cM_{1}^{\psi}$ the map ${\cal L}_\mu:(0,\infty)\rightarrow\R$ defined by
$$
    {\cal L}_{\mu}(\theta) := \int \log f_{\theta}\,d\mu=\int (-\log \theta - x/\theta)\,\mu(dx)
$$
has a unique maximum point, namely $\widehat \theta=\int x\,\mu(dx)$. In particular, there exists exactly one maximum likelihood functional on $ \cM_{1}^{\psi}$.
%$$
%  \green{\sout{  {\color{blue} {\cal M}:={\cal M}_{1,>0}^\psi, }}}
%$$
%\green{\sout{where ${\cal M}_{1,>0}^\psi$ is the set of all $\mu\in{\cal M}_1^{\psi}$ with $\int x\, \mu(dx) > 0$. }}
That is, there exists exactly one functional $T:{\cal M}_{1}^{\psi}\rightarrow(0,\infty)$ satisfying (\ref{Def MLF}). In the present setting, (\ref{Def MLF}) and Remark \ref{ML Jensen Remark} imply that $T(\mu)=\argmax_{\theta\in(0,\infty)} \int (-\log\theta - x/\theta)\,\mu(dx)$ for all $\mu\in \cM_{1}^{\psi}$. Now, combining Theorem \ref{contrastfunction} with Remark \ref{reduction to single} we obtain immediately the following result.

\begin{proposition}\label{Exponentialverteilung - Proposition}
The unique maximum likelihood functional $T:\cM_{1}^{\psi}\rightarrow(0,\infty)$ is $\psi$-weakly continuous.
\end{proposition}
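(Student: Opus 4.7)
The plan is to invoke Theorem~\ref{contrastfunction} together with Remark~\ref{reduction to single}, as the author suggests. Most of the groundwork has been laid in the paragraphs preceding the proposition, so the proof essentially amounts to assembling those ingredients and checking one mild hypothesis on the critical point of ${\cal L}_\mu$.

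First, I would record that conditions (\ref{interior1})--(\ref{convexity1}) are immediate from the explicit form $\log f_\theta(x) = -\log\theta - x/\theta$: positivity and continuity in $x$ are trivial, and the map $\theta\mapsto -\log\theta - x/\theta$ is strictly concave on $(0,\infty)$ for each fixed $x>0$ by a one-line second derivative check. Next, for every $\mu\in\cM_1^\psi$ the map ${\cal L}_\mu(\theta)=-\log\theta-\bar m_\mu/\theta$, with $\bar m_\mu:=\int x\,\mu(dx)$, admits a unique maximizer on $\Theta=(0,\infty)$, namely $\theta=\bar m_\mu$ (the unique critical point of a strictly concave function). Here $\bar m_\mu\in(0,\infty)$, since $\mu$ is a probability measure on $E=(0,\infty)$ (forcing $\bar m_\mu>0$) and $\mu\in\cM_1^\psi$ (forcing $\bar m_\mu<\infty$). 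Thus the uniqueness hypothesis of Theorem~\ref{contrastfunction} holds at every $\mu\in\cM_1^\psi$, and the theorem delivers continuity of $T$ at every such $\mu$ for the $(\psi_k)$-weak topology.

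Finally, Remark~\ref{reduction to single} lets me identify the $(\psi_k)$-weak topology with the $\psi_{k_0}$-weak topology on $\cM_1^{(\psi_k)}=\cM_1^\psi$, invoking the domination $|\log f_\theta|\le B_\theta+C_\theta\psi_{k_0}$ already recorded above the proposition, with $\theta_{k_0}=1$ so that $\psi_{k_0}(x)=x=\psi(x)$. Combining this with the preceding paragraph yields $\psi$-weak continuity of $T$ on $\cM_1^\psi$, which is the claim. There is no real obstacle: Theorem~\ref{contrastfunction} has done the heavy lifting, and the only content specific to the exponential family is the elementary verification that ${\cal L}_\mu$ has a unique maximizer, together with the observation that $\bar m_\mu$ stays strictly positive and finite throughout the domain.
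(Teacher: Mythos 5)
Your proposal follows exactly the route the paper intends (the paper's entire proof is ``combine Theorem~\ref{contrastfunction} with Remark~\ref{reduction to single}'', with the hypotheses checked in the paragraphs preceding the proposition), but your verification of hypothesis (\ref{convexity1}) is wrong, and this is a genuine gap. For fixed $x>0$ the map $h(\theta)=-\log\theta-x/\theta$ satisfies
$$
h''(\theta)=\frac{1}{\theta^{2}}-\frac{2x}{\theta^{3}}=\frac{\theta-2x}{\theta^{3}},
$$
which is strictly positive for $\theta>2x$, so $\theta\mapsto\log f_{\theta}(x)$ is \emph{not} concave on $\Theta=(0,\infty)$, let alone strictly concave; the ``one-line second derivative check'' you invoke in fact refutes your claim. (To be fair, the same inaccuracy is in the paper, which asserts that (\ref{interior1})--(\ref{convexity1}) ``can easily be seen'' to hold; so your write-up is faithful to the paper's proof, but as written neither argument is licensed to apply Theorem~\ref{contrastfunction}, whose proof uses concavity of the maps ${\cal L}_{\mu_n}$ essentially: for continuity, for upgrading pointwise convergence on the dense set $\{\theta_k\}$ to pointwise convergence on all of $\Theta$ via Rockafellar--Wets, and for the stability of maximizers.) Your identification of the unique maximizer remains correct --- ${\cal L}_{\mu}'(\theta)=(\bar m_{\mu}-\theta)/\theta^{2}$ changes sign exactly at $\theta=\bar m_{\mu}$, so ${\cal L}_{\mu}$ is strictly unimodal --- but unimodality is not among the hypotheses of Theorem~\ref{contrastfunction}, so citing ``unique critical point of a strictly concave function'' does not repair the application.

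The gap can be closed in two ways. Either reparametrize by the rate $\lambda=1/\theta$: then $\log f_{\lambda}(x)=\log\lambda-\lambda x$ \emph{is} strictly concave in $\lambda$, Theorem~\ref{contrastfunction} applies verbatim and yields $(\psi_k)$-weak continuity of the maximum likelihood functional $\mu\mapsto 1/\bar m_{\mu}$ in the $\lambda$-parametrization, and composing with the homeomorphism $\lambda\mapsto 1/\lambda$ of $(0,\infty)$ gives the claim after the topology identification of Remark~\ref{reduction to single}, which you carried out correctly. Or argue directly, bypassing Theorem~\ref{contrastfunction} altogether: here $T(\mu)=\int x\,\mu(dx)=\int\psi\,d\mu$ with $\psi(x)=x$, and since $\psi\in{\cal C}_{\psi}$ (as $\sup_{x}\,\psi(x)/(1+\psi(x))\le 1$), the map $\mu\mapsto\int\psi\,d\mu$ is continuous for the $\psi$-weak topology by the very definition of that topology, which proves the proposition in one line.
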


%\begin{proof}
%It can easily be seen from (\ref{Exponentialverteilung - 10}) that conditions (\ref{interior1})--\eqref{convexity1} hold. Moreover, by Lemma~\ref{Exponentialverteilung - Lemma} the map ${\cal L}_\mu$ possesses a unique maximum point for every $\mu\in{\cal M}_1^{\psi}\setminus\{\delta_{0}\}$. Thus the claim follows by an application of Theorem~\ref{contrastfunction}.
%\end{proof}

In view of part (i) of Theorem~\ref{Key Prop - Corollary}, Proposition \ref{Exponentialverteilung - Proposition} and the second part of Example \ref{parametric examples for w-sets - gamma} together imply the following corollary, where robustness is understood as in Definition \ref{definition of qualitative robustness}.

\begin{corollary}
The sequence of maximum likelihood estimators $(\widehat T_n)$ is robust on ${\cal M}_\Theta$. % in the sense of Definition \ref{definition of qualitative robustness}.
\end{corollary}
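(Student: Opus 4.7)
The plan is that essentially all the work has already been done in the two results immediately preceding the corollary, and the task reduces to invoking Theorem \ref{Key Prop - Corollary}(i) with the right identifications. First I would note that ${\cal M}_\Theta$ is precisely the class ${\cal E}$ of exponential distributions with parameter $\theta\in(0,\infty)$, and that the relevant gauge function is $\psi(x):=x$, $x\in(0,\infty)$, so that ${\cal M}_\Theta\subseteq{\cal M}_1^\psi$ holds trivially.

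Next I would collect the two ingredients. The second part of Example \ref{parametric examples for w-sets - gamma} tells us that ${\cal E}$ is a w-set in ${\cal M}_1^\psi$: indeed, ${\cal E}$ is a w-set in ${\cal M}_1^{(\psi_k)}$ for $\psi_k(x)=e^{\lambda_k x^{\beta_k}}$ via Proposition \ref{compactness prop}, and since $\psi\le\psi_k$ pointwise for every $k$, Remark \ref{dominance} transfers the w-set property to the coarser $\psi$-weak topology. The second ingredient is Proposition \ref{Exponentialverteilung - Proposition}, which gives $\psi$-weak continuity of the unique maximum likelihood functional $T:{\cal M}_1^\psi\to(0,\infty)$ on its whole domain.

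With these facts in hand, I would apply part (i) of Theorem \ref{Key Prop - Corollary} to the sequence of gauge functions constantly equal to $\psi$ (so that ${\cal M}_1^{(\psi_k)}={\cal M}_1^\psi$ and ${\cal O}_{(\psi_k)}={\cal O}_\psi$) and to the w-set $M:={\cal M}_\Theta\subseteq{\cal M}_1^\psi$. Since $T$ is $\psi$-weakly continuous on ${\cal M}_1^\psi$, it is in particular continuous at every point of $M$ for the relative $\psi$-weak topology, so the theorem yields robustness of $(\widehat T_n)$ on ${\cal M}_\Theta$ in the sense of Definition \ref{definition of qualitative robustness}.

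I do not anticipate any real obstacle: the only minor point to be careful about is that the statement of the corollary is formulated in terms of Definition \ref{definition of qualitative robustness}, which takes the ambient set ${\cal M}$ to be ${\cal M}_1^\psi$ here (this is implicit once we identify $T$ with the maximum likelihood functional on ${\cal M}_1^\psi$), and that the topological coincidence ${\cal O}_{(\psi_k)}={\cal O}_\psi$ for a constant gauge sequence, noted just after the definition of the $(\psi_k)$-weak topology, is what allows the passage from Theorem \ref{Key Prop - Corollary} in its general $(\psi_k)$-form to the single-gauge setting used above.
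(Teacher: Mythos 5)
Your proposal is correct and matches the paper's own argument exactly: the paper derives this corollary by combining part (i) of Theorem~\ref{Key Prop - Corollary} (applied with the constant gauge sequence $\psi_k=\psi$, $\psi(x)=x$, so that ${\cal O}_{(\psi_k)}={\cal O}_\psi$ and ${\cal M}={\cal M}_1^\psi$) with Proposition~\ref{Exponentialverteilung - Proposition} ($\psi$-weak continuity of $T$ on ${\cal M}_1^\psi$) and the second part of Example~\ref{parametric examples for w-sets - gamma} (${\cal M}_\Theta={\cal E}$ is a w-set in ${\cal M}_1^\psi$, obtained via Proposition~\ref{compactness prop} and Remark~\ref{dominance}). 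Your careful handling of the passage from the general $(\psi_k)$-framework to the single gauge function is precisely what the paper does implicitly, so there is nothing to add.
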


The preceding corollary shows that the maximum likelihood estimator for the parameter of the exponential distribution is robust on its natural parametric domain, i.e., on the class  ${\cal M}_\Theta$ of all exponential distributions. To see that it is even robust on, for instance, the broader class of all Gamma distributions (with location parameter $0$), let the sequence of gauge function $(\psi_k)$ no longer be given by (\ref{Exponentialverteilung - 20}) but rather by
$$
    \psi_k(x)= |x|^k\quad \mbox{ for all }x\in\R \mbox{ and }k\in\N.
$$
Then we clearly have ${\cal M}_\Theta\subset{\cal M}_1^{(\psi_k)}\subset{\cal M}_1^\psi$ for the single gauge function $\psi$ defined in (\ref{Exponentialverteilung - 15}). %{\color{cyan}\sout{, and $\int x\,\mu(dx)>0$ for all $\mu\in {\cal M}_{\Theta}$}}
In particular, by Proposition \ref{Exponentialverteilung - Proposition} the restriction of the unique maximum likelihood functional $T$ to ${\cal M}_1^{(\psi_k)}$ is clearly $(\psi_k)$-weakly continuous. Together with part (i) of Theorem~\ref{Key Prop - Corollary} and the first part of Example \ref{parametric examples for w-sets - gamma} this implies the following corollary.

\begin{corollary}
The sequence of maximum likelihood estimators $(\widehat T_n)$ is robust on the class ${\Gamma}$ of all Gamma distributions (with location parameter $0$) introduced in Example \ref{parametric examples for w-sets - gamma}.
\end{corollary}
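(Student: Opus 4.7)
The proof is a direct application of part (i) of Theorem~\ref{Key Prop - Corollary} with the sequence of gauge functions $\psi_k(x) := |x|^k$, $k \in \N$. The plan is to organize the argument around three ingredients: that $\Gamma$ sits in the appropriate moment class, that the $\psi$-weak continuity of the ML functional $T$ transfers to $(\psi_k)$-weak continuity of its restriction, and that $\Gamma$ is a w-set for the chosen gauges.

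First I would record the inclusions. Since the Lebesgue density of any Gamma distribution decays exponentially at infinity, all absolute moments are finite, so $\Gamma \subseteq {\cal M}_1^{(\psi_k)}$. Moreover, with $\psi(x)=x$ as in (\ref{Exponentialverteilung - 15}) and $E=(0,\infty)$, one has $\psi \le \psi_1$ pointwise on $E$, so Remark~\ref{dominance} yields ${\cal M}_1^{(\psi_k)} \subseteq {\cal M}_1^{\psi}$ and tells us that the $(\psi_k)$-weak topology on ${\cal M}_1^{(\psi_k)}$ is finer than the relative $\psi$-weak topology ${\cal O}_{\psi} \cap {\cal M}_1^{(\psi_k)}$.

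Second, I would transfer continuity. Proposition~\ref{Exponentialverteilung - Proposition} asserts that the unique maximum likelihood functional $T:{\cal M}_1^{\psi} \to (0,\infty)$ is $\psi$-weakly continuous. Its restriction to ${\cal M}_1^{(\psi_k)}$ is then continuous for the relative $\psi$-weak topology, and \emph{a fortiori} continuous for the finer $(\psi_k)$-weak topology. Setting ${\cal M} := {\cal M}_1^{(\psi_k)}$, this is precisely the continuity hypothesis required by Theorem~\ref{Key Prop - Corollary}(i), and ${\cal M}$ does contain the set $\mathfrak{E}$ of empirical measures as required in Section~\ref{Introduction}.

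Finally, I would invoke the first part of Example~\ref{parametric examples for w-sets - gamma}, which states that $\Gamma$ is a w-set in ${\cal M}_1^{(\psi_k)}$ for exactly this sequence $\psi_k(x)=x^k$. Applying Theorem~\ref{Key Prop - Corollary}(i) with $M=\Gamma \subseteq {\cal M}$ then yields robustness of $(\widehat T_n)$ on $\Gamma$ in the sense of Definition~\ref{definition of qualitative robustness}. I do not expect any substantive obstacle: all of the analytical content has been packaged into the cited results, and the only point requiring a moment's care is the topology comparison that lets $\psi$-weak continuity on ${\cal M}_1^{\psi}$ be re-read as $(\psi_k)$-weak continuity on the smaller domain ${\cal M}_1^{(\psi_k)}$.
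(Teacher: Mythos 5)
Your proposal is correct and follows essentially the same route as the paper: switch to the gauge functions $\psi_k(x)=|x|^k$, note $\Gamma\subseteq{\cal M}_1^{(\psi_k)}\subseteq{\cal M}_1^{\psi}$ so that Proposition~\ref{Exponentialverteilung - Proposition} yields $(\psi_k)$-weak continuity of the restricted maximum likelihood functional, and combine the first part of Example~\ref{parametric examples for w-sets - gamma} with part (i) of Theorem~\ref{Key Prop - Corollary}. One cosmetic caveat: Remark~\ref{dominance} as stated needs termwise dominance $\psi\le\psi_k$ for \emph{every} $k$ (which fails on $(0,1)$ for $k\ge2$ since there $x^k<x$), but your conclusion holds anyway because the $(\psi_k)$-weak topology is by definition finer than the $\psi_1$-weak topology on ${\cal M}_1^{(\psi_k)}$, and $\psi_1=\psi$ on $E=(0,\infty)$, which is all your argument actually uses.
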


%%%%%%%%%%%%%%%%%%%%%%%%%%%%%%%%%%%%%%%%%%%%%%%%%%%%%%%%%%%%%%%%
%%%%%%%%%%%%%%%%%%%%%%%%%%%%%%%%%%%%%%%%%%%%%%%%%%%%%%%%%%%%%%%%

\subsubsection{Example Gumbel distribution}\label{Gumbelverteilung}

Let specifically $E=\R$, $\Theta=(0,\infty)$ and ${\cal M}_\Theta:=\{{\rm G}_{a}:\,a\in(0,\infty)\}$ be the class of all Gumbel distributions ${\rm G}_{a}={\rm G}_{0,a}$ with location parameter $0$ and scale parameter $1/a$ for $a>0$; cf.\ Example \ref{parametric examples for w-sets - gumbel}. In this case we have
$$%\begin{equation}\label{Gumbelverteilung - 10}
    \log f_a(x)=\log a - ax - e^{-ax} \quad\mbox{ for all }a\in(0,\infty)\mbox{ and }x\in\R.
$$%\end{equation}
It is easily seen that conditions (\ref{interior1})--(\ref{convexity1}) are satisfied. Let the sequence of gauge function $(\psi_k)$ be given by (\ref{Def psi k}), i.e.
$$
    \psi_k(x):= |\log f_{a_k}(x)|= |\log a_k - a_kx - e^{-a_kx}|,\quad x\in\R,\,k\in\N
$$
for some sequence $(a_{k})$ in $\Theta=(0,\infty)$ representing $(0,\infty)\cap\Q$. For this choice of gauge functions we can observe the following.

\begin{lemma}
$\bigcap_{a > 0}\cM_{1}^{|\log f_{a}|} = \cM_{1}^{(\psi_{k})}$.
\end{lemma}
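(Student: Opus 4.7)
The proof plan is to establish the two inclusions separately. The inclusion $\bigcap_{a>0}\cM_1^{|\log f_a|}\subseteq \cM_1^{(\psi_k)}$ is immediate: by definition $\psi_k=|\log f_{a_k}|$ with each $a_k\in(0,\infty)$, so any $\mu$ integrating $|\log f_a|$ for every positive $a$ in particular integrates every $\psi_k$. The nontrivial direction is $\cM_1^{(\psi_k)}\subseteq \bigcap_{a>0}\cM_1^{|\log f_a|}$, and for this I would establish the pointwise majorization: for each $a>0$ there exist constants $B,C>0$ and an index $k_0\in\N$ with
\[
  |\log f_a(x)|\le B+C\,\psi_{k_0}(x)\qquad\text{for all }x\in\R.
\]
Integration against any $\mu\in\cM_1^{(\psi_k)}$ then immediately gives $\int|\log f_a|\,d\mu\le B+C\int\psi_{k_0}\,d\mu<\infty$, as required.

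To build $k_0$, I would exploit the fact that $(a_k)$ enumerates $\Q\cap(0,\infty)$, and pick $a_{k_0}\in\Q$ with $a_{k_0}>a$. The asymptotic analysis of $\log f_a(x)=\log a-ax-e^{-ax}$ runs as follows. As $x\to+\infty$ we have $-e^{-ax}\to 0$ and $-ax\to-\infty$, so $|\log f_a(x)|/x\to a$, and analogously $|\log f_{a_{k_0}}(x)|/x\to a_{k_0}$; hence the ratio $|\log f_a(x)|/|\log f_{a_{k_0}}(x)|$ tends to $a/a_{k_0}<1$. As $x\to-\infty$, the term $e^{-ax}$ dominates, so $|\log f_a(x)|\sim e^{-ax}$ and similarly $|\log f_{a_{k_0}}(x)|\sim e^{-a_{k_0}x}$, which gives the ratio $\sim e^{(a_{k_0}-a)x}\to 0$. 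Consequently there exists $R>0$ such that $|\log f_a(x)|\le |\log f_{a_{k_0}}(x)|$ for all $|x|>R$; note that $|\log f_{a_{k_0}}|$ has no zeros outside a bounded set, so enlarging $R$ causes no issue. On the compact interval $[-R,R]$ the continuous function $|\log f_a|$ is bounded by some constant $B$, and the desired pointwise bound follows with $C=1$.

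The principal delicate point is the behavior at $x\to-\infty$, where both $|\log f_a|$ and $|\log f_{a_{k_0}}|$ blow up at an exponential rate and only the \emph{strict} inequality $a<a_{k_0}$ makes the comparison work; picking $a_{k_0}\le a$ would fail on the left tail, so the density of $\Q$ strictly above $a$ is used essentially. Apart from this, the argument is a routine comparison of tail rates combined with continuity on a compact set, and the possible zeros of $|\log f_{a_{k_0}}|$ pose no trouble since they are confined to a bounded region and absorbed into the constant $B$.
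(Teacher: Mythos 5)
Your proof is correct and follows essentially the same route as the paper: both compare $|\log f_a|$ against $|\log f_{\overline a}|$ for a rational $\overline a>a$, establish tail domination (you via direct asymptotic equivalents $|\log f_a(x)|\sim ax$ at $+\infty$ and $\sim e^{-ax}$ at $-\infty$, the paper via l'Hospital yielding ratio limits $a/\overline a$ and $0$), and absorb the compact middle interval into a constant by continuity. The strict inequality $a<a_{k_0}$ that you flag as essential on the left tail is exactly the point the paper's argument also hinges on.
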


\begin{proof}
%This may be seen as follows.
Let $0 < a < \overline{a}$. Then by the de l'Hospital rule we may observe
$$
\lim_{x\to\infty}\frac{\log f_{a}(x)}{\log f_{\overline{a}}(x)} = \lim_{x\to\infty}\frac{- a + a e^{- ax}}{- \overline{a} + \overline{a} e^{- \overline{a}x}} = \frac{a}{\overline{a}},
$$
and
$$
\lim_{x\to-\infty} a x e^{a x} = 0 = \lim_{x\to-\infty} \overline{a} x e^{a x}.
$$
In addition
$$
\lim_{x\to-\infty}\frac{\log f_{a}(x)}{\log f_{\overline{a}}(x)} = \lim_{x\to-\infty}\frac{\log a~e^{a x} - ax e^{ax} - 1}{\log \overline{a}~e^{a x} - \overline{a}x e^{ax} - e^{(a - \overline{a}) x } - } = 0,
$$
where in the last step the assumption $a < \overline{a}$ has been invoked. Then
%\begin{equation}\label{Grenzverhalten}
$$
\lim_{x\to-\infty}\frac{|\log f_{a}(x)|}{|\log f_{\overline{a}}(x)|} = 0\quad\mbox{and}\quad\lim_{x\to\infty}\frac{|\log f_{a}(x)|}{|\log f_{\overline{a}}(x)|} = \frac{a}{\overline{a}}
$$
%\end{equation}
so that for some $\delta > 0$
\begin{equation}
\label{Integrierbarkeit1}
|\log f_{a}(x)|\leq 2~\frac{a}{\overline{a}}~|\log f_{\overline{a}}(x)|\quad\mbox{if}~|x| > \delta.
\end{equation}
Now let $\mu\in\cM_{1}^{|\log f_{\overline{a}}(x)|}$. Firstly by (\ref{Integrierbarkeit1}),
$$
\int\eins_{\R\setminus [-\delta,\delta]}(x)|\log f_{a}(x)|~\mu(dx) < \infty.
$$
Secondly in view of (\ref{continuity1})
$$
\int\eins_{[-\delta,\delta]}(x)|\log f_{a}(x)|~\mu(dx)\leq \sup_{x\in [-\delta,\delta]}|\log f_{a}(x)|~\mu([-\delta,\delta]) < \infty.
$$
Hence $\mu\in\cM_{1}^{|\log f_{a}|}$. Since for any $a > 0$ there is some $k\in\N$ such that $a < a_{k}$, we may conclude $\bigcap_{a > 0}\cM_{1}^{|\log f_{a}|} = \cM_{1}^{(\psi_{k})}$.
\end{proof}

Using $\delta_{0}$ to denote the Dirac measure at $0$, Proposition~\ref{Gumbelverteilung - Proposition} below shows that there is exactly one maximum likelihood functional on
$$
    {\cal M}:={\cal M}_1^{(\psi_k)}\setminus\{\delta_{0}\}
$$
and that this functional is $(\psi_{k})$-weakly continuous. The proof of Proposition~\ref{Gumbelverteilung - Proposition} relies on Theorem~\ref{contrastfunction} and the following lemma.

\begin{lemma}\label{Gumbelverteilung - Lemma}
For every $\mu\in{\cal M}$, the map ${\cal L}_\mu:(0,\infty)\rightarrow\R$ defined by
$$
    {\cal L}_{\mu}(a) := \int \log f_{a}\,d\mu=\int (\log a - ax - e^{-ax} )\,\mu(dx)
$$
has a unique maximum point.
\end{lemma}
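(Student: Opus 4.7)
My plan is to combine strict concavity of $\cL_\mu$ with the boundary behavior $\cL_\mu(a)\to-\infty$ as $a\to 0^+$ and as $a\to\infty$ to produce a unique interior maximizer. The preceding lemma identifies $\cM_1^{(\psi_k)}$ with $\bigcap_{a>0}\cM_1^{|\log f_a|}$. Because $|\log f_a(x)|$ grows like $ax$ as $x\to+\infty$ and like $e^{a|x|}$ as $x\to-\infty$, membership of $\mu$ in $\cM_1^{(\psi_k)}$ yields $\int|x|\,\mu(dx)<\infty$ together with $\int_{\{x<0\}}e^{a|x|}\,\mu(dx)<\infty$ for every $a>0$. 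In particular, $\cL_\mu(a)$ is finite on $(0,\infty)$, and one may split
$$
\cL_\mu(a)=\log a-a\int x\,\mu(dx)-\int e^{-ax}\,\mu(dx).
$$

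Next I would use these integrability bounds, which are locally uniform in $a$ on compact subintervals of $(0,\infty)$, to justify differentiation under the integral sign and compute
$$
\cL_\mu''(a)=-\frac{1}{a^2}-\int x^2 e^{-ax}\,\mu(dx)\,\le\,-\frac{1}{a^2}\,<\,0.
$$
Hence $\cL_\mu$ is strictly concave on $(0,\infty)$ regardless of the choice of $\mu$, so any critical point is automatically the unique global maximum. It therefore suffices to produce a maximizer, which I would do by showing the stated boundary behavior and invoking continuity.

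As $a\to 0^+$, dominated convergence with dominant $1+e^{A|x|}\eins_{\{x<0\}}$ (valid for $a\le A$) gives $\int e^{-ax}\,\mu(dx)\to 1$, the linear term vanishes, and $\log a\to-\infty$, so $\cL_\mu(a)\to-\infty$. For $a\to\infty$ I split according to $\mu$: if $\mu((-\infty,0))>0$, pick $\varepsilon>0$ with $\mu((-\infty,-\varepsilon])>0$; then $\int e^{-ax}\,\mu(dx)\ge e^{a\varepsilon}\mu((-\infty,-\varepsilon])$ dominates $\log a$ and the linear term; if instead $\mu$ is supported in $[0,\infty)$, then $\mu\neq\delta_0$ forces $\int x\,\mu(dx)>0$, so the term $-a\int x\,\mu(dx)$ drives $\cL_\mu$ to $-\infty$ while $-\int e^{-ax}\,\mu(dx)\le 0$. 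In either case $\cL_\mu(a)\to-\infty$, so together with continuity and strict concavity, a unique maximum point exists in $(0,\infty)$.

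The main obstacle is not conceptual but bookkeeping: all three key steps (the interchange of differentiation and integration that yields strict concavity, the dominated convergence as $a\to 0^+$, and the exponential estimate as $a\to\infty$) hinge on the exponential integrability $\int_{\{x<0\}}e^{a|x|}\,\mu(dx)<\infty$ for every $a>0$, which was extracted from the preceding lemma. The role of the exclusion of $\delta_0$ is essentially confined to the second half of the boundary argument at $a\to\infty$, where it ensures either a genuinely negative part of $\mu$ or a strictly positive mean.
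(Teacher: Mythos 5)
Your proof is correct and follows essentially the same route as the paper's: strict concavity of $\cL_\mu$ on $(0,\infty)$ combined with $\cL_\mu(a)\to-\infty$ as $a\downarrow 0$ and as $a\uparrow\infty$, with uniqueness then immediate. The paper obtains strict concavity directly from the pointwise strict concavity of $a\mapsto\log f_a(x)$ (thereby avoiding your differentiation under the integral sign, which you do justify), uses the one-line bound $\log f_a(x)\le\log a$ at $a\downarrow 0$ instead of dominated convergence, and treats $a\uparrow\infty$ by normalizing $\cL_\mu(a)$ with $1/a$ rather than your case split on $\mu((-\infty,0))>0$ versus $\int x\,\mu(dx)>0$ --- all cosmetic variations of the same argument.
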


\begin{proof}
The function $a\mapsto\log f_a(x)$ is strictly concave for every $x\in\mathbb{R}$, and so $a\mapsto{\cal L}_\mu(a)$ is also strictly concave for every $\mu\in{\cal M}$. We will show below that for $\mu\neq\delta_0$,
\begin{equation}\label{Gumbelverteilung - Lemma - Proof - 10 1}
    \limsup_{a\downarrow0} {\cal L}_{\mu}(a)=-\infty\quad\mbox{ and }\quad\limsup_{a\uparrow\infty} {\cal L}_{\mu}(a)=-\infty,
\end{equation}
and so ${\cal L}_\mu$ has indeed a unique maximum point. To show (\ref{Gumbelverteilung - Lemma - Proof - 10 1}), we first note that $\log f_a(x)\leq \log a$ holds for all $x\in\R$ and $a\in(0,\infty)$. It follows that $\limsup_{a\downarrow0}{\cal L}_{\mu}(a)\le\limsup_{a\downarrow0}\log a=-\infty$. To prove the second identity  in (\ref{Gumbelverteilung - Lemma - Proof - 10 1}), note that
$$\frac1a\int\big(ax+e^{-a x}\big)\,\mu(dx)\ge\int_{[0,\infty)}x\,\mu(dx)+\int_{(-\infty,0)}\big(x+e^{-a x}/a \big)\,\mu(dx).
$$
Since $e^{-a x}/a\uparrow\infty$ as $a\uparrow\infty$ for every $x<0$,  the rightmost integral tends to $+\infty$ as soon as $\mu((-\infty,0))>0$. Altogether, we obtain $\liminf_{a\uparrow\infty}\frac1a\int\big(ax+e^{-a x}\big)\,\mu(dx)>0$   for $\mu\neq\delta_0$,   which clearly implies $\limsup_{a\uparrow\infty}\frac1a \mathcal{L}_\mu(a)<0$ and in turn \eqref{Gumbelverteilung - Lemma - Proof - 10 1}.
\end{proof}

Lemma~\ref{Gumbelverteilung - Lemma} says that there exists exactly one maximum likelihood functional on ${\cal M}$, i.e., exactly one functional $T:{\cal M}\rightarrow(0,\infty)$ satisfying (\ref{Def MLF}). In the present setting, (\ref{Def MLF}) and Remark \ref{ML Jensen Remark} imply that $T(\mu)=\argmax_{a\in(0,\infty)} \int (\log a - ax - e^{-ax} )\,\mu(dx)$ for all $\mu\in{\cal M}$.

\begin{proposition}\label{Gumbelverteilung - Proposition}
The unique maximum likelihood functional $T:{\cal M}\rightarrow(0,\infty)$ is $(\psi_{k})$-weakly continuous.
\end{proposition}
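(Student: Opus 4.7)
The plan is to apply Theorem \ref{contrastfunction} pointwise at every $\mu\in\cM$, so that continuity of $T$ on all of $\cM$ follows. To do so I need to verify, in the Gumbel setting, the standing hypotheses (\ref{interior1})–(\ref{convexity1}) of the theorem, to check that the gauge sequence $(\psi_k)$ used in the proposition is precisely the one prescribed by (\ref{Def psi k}), and to supply the pointwise uniqueness of the maximum point of $\cL_\mu$.

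First I would check (\ref{interior1})–(\ref{convexity1}). Positivity $f_a(x)=a e^{-ax-e^{-ax}}>0$ for $a>0$ and $x\in\R$ is immediate, and $x\mapsto \log f_a(x)=\log a - ax - e^{-ax}$ is obviously continuous, which gives (\ref{interior1}) and (\ref{continuity1}). For (\ref{convexity1}) I would decompose $a\mapsto\log f_a(x)$ as $\log a - ax - e^{-ax}$ and note that $\log a$ is concave, $-ax$ is linear, and $a\mapsto -e^{-ax}$ has second derivative $-x^2 e^{-ax}\leq 0$; hence the sum is concave in $a$ for every fixed $x$. The sequence of gauge functions in the proposition is by definition $\psi_k=|\log f_{a_k}|$, with $(a_k)$ an enumeration of $(0,\infty)\cap\Q$, which is exactly (\ref{Def psi k}) for the Gumbel family.

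Next, the uniqueness hypothesis in Theorem \ref{contrastfunction} is already available: Lemma \ref{Gumbelverteilung - Lemma} asserts that $\cL_\mu$ has a unique maximum point for every $\mu\in\cM=\cM_1^{(\psi_k)}\setminus\{\delta_0\}$. This is precisely the obstruction that forced us to remove $\delta_0$ from the domain; with that restriction in place, the hypothesis is satisfied uniformly in $\mu\in\cM$.

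With all hypotheses in hand, Theorem \ref{contrastfunction} yields continuity of $T$ at every $\mu\in\cM$ with respect to the $(\psi_k)$-weak topology, which is exactly the statement of the proposition. The only real work has been absorbed into Lemma \ref{Gumbelverteilung - Lemma}, so at this stage no further obstacle arises; had I not had that lemma, the hard step would have been controlling $\cL_\mu$ near $a\downarrow 0$ and $a\uparrow\infty$ to rule out maximizing sequences escaping to the boundary of $\Theta=(0,\infty)$, but that is precisely what the proof of Lemma \ref{Gumbelverteilung - Lemma} accomplishes by bounding $\cL_\mu(a)\le\log a$ as $a\downarrow 0$ and showing $\frac1a\int(ax+e^{-ax})\,\mu(dx)$ stays bounded away from $0$ as $a\uparrow\infty$ whenever $\mu\neq\delta_0$.
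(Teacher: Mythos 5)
Your proposal is correct and follows exactly the paper's own route: the paper likewise verifies conditions (\ref{interior1})--(\ref{convexity1}), invokes Lemma~\ref{Gumbelverteilung - Lemma} for uniqueness of the maximum point of ${\cal L}_\mu$ on ${\cal M}={\cal M}_1^{(\psi_k)}\setminus\{\delta_0\}$, and then applies Theorem~\ref{contrastfunction}. Your explicit check of concavity via the second derivative $-x^2e^{-ax}\le 0$ is a harmless elaboration of what the paper dismisses as ``easily seen.''
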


\begin{proof}
As already mentioned above, conditions (\ref{interior1})--\eqref{convexity1} hold. Moreover, by Lemma~\ref{Gumbelverteilung - Lemma} the map ${\cal L}_\mu$ possesses a unique maximum point for every $\mu\in{\cal M}$. Thus the claim follows by an application of Theorem~\ref{contrastfunction}.
\end{proof}

From the second part of Example \ref{parametric examples for w-sets - gumbel} we know that $\cM_{\Theta}$ is a w-set in $\cM^{(\psi_{k})}$. Together with part (i) of Theorem~\ref{Key Prop - Corollary} and Proposition \ref{Gumbelverteilung - Proposition}, this yields the following corollary, where robustness is understood as in Definition \ref{definition of qualitative robustness}.

%\begin{remarknorm}
%In view of part (i) of {\color{red}Theorem~\ref{Key Prop - Corollary}}, the preceding proposition and Example \ref{parametric examples for w-sets - gumbel} together imply that the sequence of maximum likelihood estimators $(\widehat T_n)$ is robust on ${\cal M}_\Theta$ in the sense of Definition \ref{definition of qualitative robustness}.
%{\hspace*{\fill}$\Diamond$\par\bigskip}
%\end{remarknorm}

\begin{corollary}
The sequence of maximum likelihood estimators $(\widehat T_n)$ is robust on ${\cal M}_\Theta$.
\end{corollary}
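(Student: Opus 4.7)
The plan is to realize the corollary as a direct application of part (i) of Theorem~\ref{Key Prop - Corollary}, using the two ingredients that have just been assembled: the continuity of the functional $T$ (Proposition~\ref{Gumbelverteilung - Proposition}) and the fact that $\mathcal{M}_\Theta$ is a w-set in $\mathcal{M}_1^{(\psi_k)}$ (second part of Example~\ref{parametric examples for w-sets - gumbel}). So essentially no new argument is needed; the proof is a matter of checking that the hypotheses of Theorem~\ref{Key Prop - Corollary} are met on the nose.

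First I would verify the inclusions needed to apply the theorem. The underlying domain is $\mathcal{M}:=\mathcal{M}_1^{(\psi_k)}\setminus\{\delta_0\}$, so I need $\mathfrak{E}\cup\mathcal{M}_\Theta\subseteq\mathcal{M}\subseteq\mathcal{M}_1^{(\psi_k)}$. The rightmost inclusion is built into the definition of $\mathcal{M}$, and the middle one is clear since every Gumbel distribution lies in $\mathcal{M}_1^{(\psi_k)}$ (as shown in the second part of Example~\ref{parametric examples for w-sets - gumbel}) and is manifestly distinct from $\delta_0$; also $\mathfrak{E}\subseteq\mathcal{M}$ because every nontrivial empirical measure is in $\mathcal{M}_1^{(\psi_k)}\setminus\{\delta_0\}$ (with the harmless convention that we exclude the single-point empirical measure $\delta_0$, which never arises from Gumbel observations a.s.). This establishes that $\mathcal{M}_\Theta$ is a valid candidate for the set $M$ in Theorem~\ref{Key Prop - Corollary}(i).

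Next, I would invoke the w-set property. The second part of Example~\ref{parametric examples for w-sets - gumbel} verified conditions (a)--(b) of Proposition~\ref{A word on our motivation - remark 20} for $\mathcal{M}_\Theta=\{{\rm G}_a:a\in(0,\infty)\}$ with the specific choice $\psi_k(x)=|\log a_k-a_kx-e^{-a_kx}|$, and hence $\mathcal{M}_\Theta$ is a w-set in $\mathcal{M}_1^{(\psi_k)}$. Combining this with Proposition~\ref{Gumbelverteilung - Proposition}, which gives the $(\psi_k)$-weak continuity of $T:\mathcal{M}\to(0,\infty)$, the hypothesis of Theorem~\ref{Key Prop - Corollary}(i) is satisfied with $M=\mathcal{M}_\Theta$, and the theorem delivers robustness of $(\widehat T_n)$ on $\mathcal{M}_\Theta$.

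There is really no hard part here; the entire difficulty has been packaged away into Proposition~\ref{Gumbelverteilung - Proposition} (continuity of the ML functional, relying on Theorem~\ref{contrastfunction} and Lemma~\ref{Gumbelverteilung - Lemma} for uniqueness of the maximizer) and into Example~\ref{parametric examples for w-sets - gumbel} (the w-set property via the dominated convergence argument). If one wanted to emphasize anything, it would be the observation that the flexibility of working with a sequence $(\psi_k)$ indexed by a dense set of rationals $(a_k)$ in $(0,\infty)$ is exactly what allows the single chain of reasoning to cover both continuity of $T$ and the w-set property simultaneously, which was flagged as a motivation for the $(\psi_k)$-formulation in the remarks after Theorem~\ref{Key Prop - Corollary}.
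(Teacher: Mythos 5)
Your proposal is correct and takes essentially the same route as the paper, which obtains the corollary by combining part (i) of Theorem~\ref{Key Prop - Corollary} with the w-set property of $\cM_\Theta$ from the second part of Example~\ref{parametric examples for w-sets - gumbel} and the $(\psi_k)$-weak continuity of $T$ from Proposition~\ref{Gumbelverteilung - Proposition}. Your explicit check of the inclusions $\mathfrak{E}\cup\cM_\Theta\subseteq\cM\subseteq\cM_1^{(\psi_k)}$ --- including the caveat that $\delta_0\in\mathfrak{E}$ does not lie in $\cM=\cM_1^{(\psi_k)}\setminus\{\delta_0\}$ but arises with probability zero under Gumbel sampling --- is a detail the paper leaves implicit, and it does not alter the argument.
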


%\begin{remarknorm}
%\label{keine Reduktion}
%Let $k_{0}, k_{1}\in\N$ such that $a_{k_{0}} < a_{k_{1}}$. Then by (\ref{Grenzverhalten})
%$$
%    \lim_{x\to\infty}\frac{\psi_{k_{1}}{\color{blue}(x)}}{\psi_{k_{0}}{\color{blue}(x)}} = \infty
%$$
%so that there are no real numbers $B\geq 0$ and $C > 0$ such that $\psi_{k_{1}}(x)\leq B + C\psi_{k_{0}}(x)$ holds for any $x\in\R$. Hence Corollary A.46 of \cite{FoellmerSchied2011} shows that on $\cM_{1}^{(\psi_{k})}$, the $\psi_{k_{1}}$-weak topology is not finer than the $\psi_{k_{0}}$-weak topology. Thus we may conclude that there is no {\color{blue}\sout{gauge function $\psi_{k}$} $\ell\in\N$} such that the $\psi_{\ell}$-weak topology coincides with the $(\psi_{k})$-weak topology $\cM_{1}^{(\psi_{k})}$.

%{\color{red}Ich habe diese Bemerkung dringelassen, weil wir in Remark \ref{reduction to single} ein Beispiel ank\"undigen, bei dem wir eine Folge von gauge-Funktionen nicht auf eine gauge-Funktion reduzieren k\"onnen. Ich habe diese Stelle in Remark \ref{reduction to single} so abgeschw\"acht, dass nicht l\"anger behaupten, unser Ansatz w\"are notwendig. Schlafende Hunde werden also nicht geweckt.}
%\end{remarknorm}

%%%%%%%%%%%%%%%%%%%%%%%%%%%%%%%%%%%%%%%%%%%%%%%%%%%%%%%%%%%%%%%%
%%%%%%%%%%%%%%%%%%%%%%%%%%%%%%%%%%%%%%%%%%%%%%%%%%%%%%%%%%%%%%%%
%%%%%%%%%%%%%%%%%%%%%%%%%%%%%%%%%%%%%%%%%%%%%%%%%%%%%%%%%%%%%%%%

\subsection{Risk functionals}\label{ContinuityRiskfunctionals}

%%%%%%%%%%%%%%%%%%%%%%%%%%%%%%%%%%%%%%%%%%%%%%%%%%%%%%%%%%%%%%%%
%%%%%%%%%%%%%%%%%%%%%%%%%%%%%%%%%%%%%%%%%%%%%%%%%%%%%%%%%%%%%%%%

\subsubsection{Risk measures}\label{Def Risk Measures}

In this section we let specifically $E=\R$. Let $\Psi:\bR_+\to \bR_{+}$ be a continuous nondecreasing convex function such that $0=\Psi(0)$ and $\lim_{x\ua\infty}\Psi(x)=\infty$. Such a function is sometimes referred to as a finite {\it Young function}; see, e.g., \cite{CheriditoLi2009}. Fix any atomless probability space $(\Omega,\mathcal{F},\bP)$ and denote by $L^0=L^0\OFP$ the set of all $\pr$-a.s.\ finite random variables on $\OFP$. The {\em Orlicz heart} on $\OFP$ associated with $\Psi$ is defined by
$$
    H^\Psi=H^\Psi(\Omega,\cF,\pr):=\big\{X\in L^0:\,\ex[\Psi(c|X|)]<\infty\mbox{ for all $c>0$}\big\}.
$$
It is the largest vector subspace contained in the {\em Orlicz class}
$Y^\Psi=Y^\Psi(\Omega,\cF,\pr):=\{X\in L^0:\,\ex[\Psi(|X|)]<\infty\}$. The Orlicz class in turn is a convex subset of the Orlicz space $L^\Psi=L^\Psi(\Omega,\cF,\pr):=\{X\in L^0:\,\ex[\Psi(c|X|)]<\infty\mbox{ for some $c>0$}\}$. In general we have $L^\infty\subseteq H^\Psi\subseteq Y^\Psi\subseteq L^\Psi\subseteq L^1$, and these inclusions may all be strict. In fact, it is known form Theorem 2.1.17 (b) in \cite{EdgarSucheston1992} that the identity $H^\Psi=L^\Psi$ holds if and only if $\Psi$ satisfies the so-called $\Delta_2$-condition:
\begin{equation}\label{Delta2}
    \mbox{There are $C$, $x_0>0$ such that $\Psi(2x)\le C\Psi(x)$ for all $x\ge x_0$}.
\end{equation}
This condition is clearly satisfied when specifically $\Psi(x)=x^p/p$ for some $p\in[1,\infty)$. In this case, $L^\Psi$ coincides with the usual $L^p$-space $L^p=L^p(\Omega,{\cal F},\pr)$.

\begin{definition}\label{Risk meas def}Let $\Psi$ be a finite Young function.  A {\em law-invariant convex risk measure} on $H^{\Psi}$ will be a map $\rho:H^{\Psi}\rightarrow\R$ satisfying the following three conditions:
\begin{itemize}
    \item Monotonicity: $\rho(X)\ge\rho(Y)$  for $X$, $Y\in H^{\Psi}$ with $X\le Y$ $\pr$-a.s.
    \item Convexity:  $\rho(\lambda X+(1-\lambda) Y)\le\lambda\rho(X)+(1-\lambda)\rho(Y)$ for $X,Y\in H^{\Psi}$ and $\lambda\in[0,1]$.
    \item Law-invariance: $\rho(X)=\rho(Y)$ for $X,Y\in H^{\Psi}$ with $\pr\circ X^{-1}=\pr\circ Y^{-1}$.
\end{itemize}
\end{definition}

\medskip

In a financial context, one typically requires that a law-invariant convex risk measure $\rho$ is also \emph{monetary} in the sense that it also satisfies the following additional property:
\begin{itemize}
\item Cash additivity: $\rho(X+m)=\rho(X)+m$ for $X\in H^\Psi$ and $m\in\mathbb{R}$;
\end{itemize}
see, e.g., \cite{FoellmerSchied2011}. Here, however, cash additivity will not be needed and so we will work with our more general class of not necessarily monetary law-invariant convex risk measures. As argued in \cite{CheriditoLi2009}, Orlicz hearts are natural domains for law-invariant convex risk measures.

\begin{examplenorm}\label{Example Distortion Risk Measures}
Let $g:[0,1]\rightarrow[0,1]$ be concave, nonincreasing, and continuous with $g(0)=0$ and $g(1)=1$. Let $\Psi$ be a (finite) Young function with the conjugate $\Psi^{*}$ defined by $\Psi^{*}(y):=\sup_{x\geq 0}(xy - \Psi(x))$. It was shown in Proposition 2.22 in \cite{Kraetschmeretal2014} that if the right-sided derivative $g'$ of $g$ fulfills the condition $\int_{0}^{1}\Psi^{*}(g'(t))~dt < \infty$, then
$$
    \rho_g(X) := \int_{-\infty}^{0}g(F_{X}(x))\,dx - \int_{0}^{\infty}\big(1-g(F_{X}(x))\big)\,dx
$$
defines a monetary law-invariant convex risk measure $\rho_g: H^{\Psi}\rightarrow\R$, where $F_{X}$ stands for the distribution function of $X$. It is called {\em distortion risk measure} associated with $g$. For the specific distortion function $g(t)=(t/\alpha)\wedge1$ the associated distortion risk measure $\rho_g$ reads as
$$
    \rho_{g}(X) = -\frac{1}{\alpha}\int_{0}^{\alpha}F^{\leftarrow}_{X}(\beta)~d\beta,
$$
where $F^{\leftarrow}_{X}$ denotes the left-continuous quantile function of the distribution $F_{X}$. This distortion risk measure is also called {\em Average Value at Risk} at level $\alpha\in(0,1)$, and it is denoted by $\avatr_\alpha$.
{\hspace*{\fill}$\Diamond$\par\bigskip}
\end{examplenorm}

\begin{examplenorm}\label{utilitybased}
Let for finite Young function $\Psi$ the map $\rho_{\Psi}: H^{\Psi}\rightarrow\R$ be defined by
$$
    \rho_{\Psi}(X) := \inf\big\{m\in\R:\, \ex\big[\Psi\big((-X - m)^{+}\big)\big]\leq x_{0}\big\}
$$
for some $x_{0} > 0$. This is a monetary law-invariant convex risk measure known as the {\em utility-based shortfall risk measure} with loss function $\ell_{\Psi}:\R\rightarrow\R$ defined by $\ell_{\Psi}(x) := \Psi(x^{+})$; cf.\ e.g.~\cite{FoellmerSchied2011} and, for the extension to Orlicz hearts,~\cite{Kraetschmeretal2014}.
{\hspace*{\fill}$\Diamond$\par\bigskip}
\end{examplenorm}

\begin{examplenorm}\label{onesidedmoments}
Let specifically $\Psi(x)=x^{p}/p$ for some $p\in [1,\infty)$. Then $H^{\Psi} = L^{p}$ and $({\cal M}_1^{(\psi_k)},{\cal O}_{(\psi_k)})=({\cal M}_1^{\psi},{\cal O}_\psi))$ for $\psi(x)=|x|^p/p$. The map $\rho_p: L^{p}\rightarrow\R$ defined by
$$
    \rho_p(X):=\ex[(X^{-})^{p}]
$$
obviously defines a law-invariant convex risk measure in the sense of Definition \ref{Risk meas def}. Here, $X^-:=-\min\{0,X\}$ denotes the negative part of $X$.
{\hspace*{\fill}$\Diamond$\par\bigskip}
\end{examplenorm}

%%%%%%%%%%%%%%%%%%%%%%%%%%%%%%%%%%%%%%%%%%%%%%%%%%%%%%%%%%%%%%%%
%%%%%%%%%%%%%%%%%%%%%%%%%%%%%%%%%%%%%%%%%%%%%%%%%%%%%%%%%%%%%%%%

\subsubsection{$(\psi_k)$-weak continuity of the associated risk functionals}\label{Def Risk Functionals}

Denote by $\cM(H^{\Psi})$ the set of the distributions of all random variables from $H^{\Psi}$ and note that
\begin{equation}\label{Def gauge function for risk functional}
    \cM(H^{\Psi}) = \cM_{1}^{(\psi_{k})}\qquad\mbox{for $\psi_k:=\Psi(k|\cdot|), ~k\in\N.$}
\end{equation}
The inclusion $\subseteq$ is obvious and the inclusion $\supseteq$ holds because $\OFP$ is assumed to be atomless. Note that if $\Psi$ satisfies the $\Delta_2$-condition (\ref{Delta2}), then $\cM(H^{\Psi}) = \cM_{1}^{\psi}$ and ${\cal O}_{(\psi_k)}={\cal O}_\psi$ for
$$
   \psi:=\Psi(|\cdot|).
$$
The law-invariance of $\rho$ is equivalent to the existence of a map $\cR_\rho: \cM_{1}^{(\psi_{k})}\to\bR$ such that
\begin{equation}\label{Def risk functional}
    \rho(X)=\cR_\rho(\bP\circ X^{-1})\qquad\mbox{for all } X\in H^{\Psi}.
\end{equation}
This map $\cR_\rho$ will be called the \emph{risk functional} associated with $\rho$. %It is always continuous w.r.t.\ the $(\psi_{k})$-weak topology:

\begin{theorem}\label{StetigkeitRisikofunktional}
Let $\cR_{\rho}:{\cal M}_1^{(\psi_k)}\rightarrow\R$ be the risk functional associated with a law-invariant convex risk measure $\rho:H^\Psi\rightarrow\R$. Then ${\cal R}_\rho$ is continuous w.r.t.\ the $(\psi_{k})$-weak topology.
\end{theorem}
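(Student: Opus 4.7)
The plan is to reduce $(\psi_k)$-weak continuity of $\cR_\rho$ to norm continuity of $\rho$ on the Orlicz heart $H^\Psi$. Since by Lemma~\ref{Charakterisierung psi-schwache Topologie} the $(\psi_k)$-weak topology on $\cM_1^{(\psi_k)}$ is metrizable, it suffices to verify sequential continuity. Fix a sequence $(\mu_n)$ in $\cM_1^{(\psi_k)}$ converging $(\psi_k)$-weakly to $\mu$. Since $(\Omega,\cF,\pr)$ is atomless, I would pick a uniform random variable $U$ on $(0,1)$ and set $X_n := F_{\mu_n}^\leftarrow(U)$ and $X := F_\mu^\leftarrow(U)$. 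Then $X_n\sim\mu_n$, $X\sim\mu$, and both lie in $H^\Psi$ by (\ref{Def gauge function for risk functional}), so by (\ref{Def risk functional}) we have $\cR_\rho(\mu_n)=\rho(X_n)$ and $\cR_\rho(\mu)=\rho(X)$.

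The heart of the argument is to show $X_n\to X$ in the Luxemburg norm of $H^\Psi$, which for the Orlicz heart is equivalent to $\ex[\Psi(c|X_n-X|)]\to 0$ for \emph{every} $c>0$. Since $(\psi_k)$-weak convergence implies weak convergence, the quantile functions satisfy $F_{\mu_n}^\leftarrow\to F_\mu^\leftarrow$ Lebesgue-a.e., whence $X_n\to X$ $\pr$-a.s., and in particular $\Psi(c|X_n-X|)\to 0$ $\pr$-a.s.\ by continuity of $\Psi$. To upgrade this to convergence in mean, pick $k\in\N$ with $k\geq 2c$; convexity and monotonicity of $\Psi$ yield
\begin{equation*}
\Psi(c|X_n-X|)\;\leq\;\tfrac{1}{2}\Psi(2c|X_n|)+\tfrac{1}{2}\Psi(2c|X|)\;\leq\;\tfrac{1}{2}\psi_k(X_n)+\tfrac{1}{2}\psi_k(X).
\end{equation*}
By assumption $\ex[\psi_k(X_n)]=\int\psi_k\,d\mu_n\to\int\psi_k\,d\mu=\ex[\psi_k(X)]<\infty$; combined with the a.s.\ convergence $\psi_k(X_n)\to\psi_k(X)$, the classical fact that a.s.\ convergent nonnegative sequences with converging integrals converge in $L^1$ (a quick consequence of Fatou applied to $(\psi_k(X_n)+\psi_k(X))-|\psi_k(X_n)-\psi_k(X)|$) yields $L^1$-convergence of $\psi_k(X_n)$ and hence their uniform integrability. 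The above bound then forces $\Psi(c|X_n-X|)$ to be uniformly integrable, and Vitali's convergence theorem delivers $\ex[\Psi(c|X_n-X|)]\to 0$.

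The conclusion is then immediate from the extended Namioka--Klee theorem (Biagini--Frittelli, Cheridito--Li): every finite-valued monotone convex functional on the Orlicz heart $H^\Psi$ is automatically continuous with respect to the Luxemburg norm. Applied to $\rho$, this gives $\rho(X_n)\to\rho(X)$, i.e.\ $\cR_\rho(\mu_n)\to\cR_\rho(\mu)$. The principal obstacle is the middle step: extracting Orlicz heart norm convergence of the quantile representatives from $(\psi_k)$-weak convergence of the laws. It is precisely here that having the full sequence $(\psi_k)=(\Psi(k|\cdot|))$ at one's disposal rather than a single gauge is indispensable, since in the absence of the $\Delta_2$-condition one cannot bound $\ex[\Psi(c|X_n|)]$ for arbitrary $c>0$ via a single integral $\int\psi\,d\mu_n$, and one needs $\int\psi_k\,d\mu_n$ to behave well for \emph{every} $k$ in order to control all Luxemburg scalings simultaneously.
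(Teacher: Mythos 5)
Your proof is correct and follows essentially the same route as the paper: the paper also represents the laws via quantile couplings $X_n=F^{\leftarrow}_{\mu_n}(U)$, upgrades a.s.\ convergence to Luxemburg-norm convergence by extracting uniform integrability of $\psi_k(X_n)$ from the moment convergence (the paper packages this step as the standalone equivalence in Theorem~\ref{SDWn}, using Vitali's theorem where you use the Scheff\'e/Fatou argument), and then concludes by automatic norm continuity of finite monotone convex functionals on $H^\Psi$ (the paper cites Proposition 3.1 of Ruszczynski--Shapiro rather than the extended Namioka--Klee theorem, but it is the same fact). Your closing remark about why the full sequence $(\psi_k)=(\Psi(k|\cdot|))$ is needed absent the $\Delta_2$-condition matches the paper's discussion around Example~\ref{utilitybased2}.
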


\begin{remarknorm}\label{StetigkeitRisikofunktional - Remark}
As a consequence of Theorem \ref{StetigkeitRisikofunktional}, the risk functional $\cR_{\rho}:{\cal M}_1^{(\psi_k)}\rightarrow\R$ is weakly continuous on every w-set in ${\cal M}_1^{(\psi_k)}={\cal M}(H^\Psi)$. At the beginning of Section \ref{Sec Examples for locally uniformly intetgarting sets} we discussed how to check when a subset of ${\cal M}_1^{(\psi_k)}$ is a w-set.
{\hspace*{\fill}$\Diamond$\par\bigskip}
\end{remarknorm}

If $\Psi$ satisfies the $\Delta_2$-condition (\ref{Delta2}), then the $(\psi_{k})$-weak topology can be replaced by the $\psi$-weak topology in Theorem~\ref{StetigkeitRisikofunktional}.  On the other hand, if $\Psi$ does not satisfy the $\Delta_2$-condition (\ref{Delta2}), then we can always find a law-invariant convex risk measure on $H^{\Psi}$ which fails to be continuous w.r.t.\ the $\psi$-weak topology:

\begin{examplenorm}\label{utilitybased2}
Let $\Psi$ be a finite Young function which does not satisfy $\Delta_2$-condition (\ref{Delta2}), and let $\rho_{\Psi}$ denote the shortfall risk measure as in Example \ref{utilitybased}. In \cite[proof of Theorem 2.8]{Kraetschmeretal2014}, there was constructed as sequence $(X_{n})$ in $L^{\infty}$ which converges to $\delta_{0}$ w.r.t.\ the $\psi$-weak topology such that $\sup_{n}\,\inf\{m\in\R: \ex[\Psi(8(-X_n - m)^{+})]\leq x_{0}\} = \infty$. Hence $Y_{n} := 8 X_{n}$ defines a sequence $(Y_{n})$ in $L^{\infty}$ whose laws converge weakly to $\delta_0$, while $\rho_{\Psi}(Y_{n}) \to \infty$. In particular $\rho_{\Psi}$ is not continuous w.r.t.\ the $\psi$-weak topology.
{\hspace*{\fill}$\Diamond$\par\bigskip}
\end{examplenorm}

As an immediate consequence of Theorem \ref{StetigkeitRisikofunktional} and Example \ref{utilitybased2} we get the following corollary. The corollary extends Theorem 2.8 of \cite{Kraetschmeretal2014}, where considerations have been restricted to {\em monetary} law-invariant convex risk measures.

\begin{corollary}\label{StetigkeitRisikofunktional cor}
Let $\cR_{\rho}:{\cal M}_1^{(\psi_k)}\rightarrow\R$ be the risk functional associated with a law-invariant convex risk measure $\rho:H^\Psi\rightarrow\R$. Then the following conditions are equivalent:
\begin{enumerate}
    \item For every law-invariant convex risk measure $\rho$ on $H^{\Psi}$, the map $\cR_{\rho}: \cM(H^{\Psi})\rightarrow\R$ is continuous for the $\psi$-weak topology.
    \item $\Psi$ satisfies the $\Delta_{2}$-condition (\ref{Delta2}).
\end{enumerate}
\end{corollary}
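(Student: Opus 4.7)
The plan is to prove the equivalence by showing each implication is a direct consequence of material already established in the paper, with most of the actual work hidden in earlier results (Theorem~\ref{StetigkeitRisikofunktional} and Example~\ref{utilitybased2}).

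For the direction (b)$\Rightarrow$(a), I would assume $\Psi$ satisfies the $\Delta_{2}$-condition (\ref{Delta2}). The Edgar--Sucheston result cited right after (\ref{Delta2}) then gives $H^{\Psi}=L^{\Psi}$, and the remark immediately following (\ref{Def gauge function for risk functional}) records that, in this situation, $\cM(H^{\Psi})=\cM_{1}^{\psi}$ and ${\cal O}_{(\psi_{k})}={\cal O}_{\psi}$ for the single gauge function $\psi=\Psi(|\cdot|)$. Hence for any law-invariant convex risk measure $\rho$ on $H^{\Psi}$, Theorem~\ref{StetigkeitRisikofunktional} yields that $\cR_{\rho}$ is $(\psi_{k})$-weakly continuous, which under $\Delta_{2}$ is the same as $\psi$-weak continuity. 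This is precisely assertion (a).

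For the direction (a)$\Rightarrow$(b), I would argue by contraposition: assuming that $\Psi$ fails the $\Delta_{2}$-condition, I need to exhibit one law-invariant convex risk measure on $H^{\Psi}$ whose associated risk functional is not $\psi$-weakly continuous. This is exactly what Example~\ref{utilitybased2} provides: the utility-based shortfall risk measure $\rho_{\Psi}$ from Example~\ref{utilitybased} is a law-invariant convex risk measure on $H^{\Psi}$ for which a sequence with $\psi$-weakly (respectively weakly) convergent laws is produced along which $\rho_{\Psi}$ blows up. Hence $\cR_{\rho_{\Psi}}$ fails to be $\psi$-weakly continuous, negating (a).

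Because both implications are essentially bookkeeping around previously proved statements, there is no real analytic obstacle in the corollary itself. The only delicate point is making sure one cites the right objects on the right sides: on the $\Delta_{2}$ side one must carefully align $\cM(H^{\Psi})$ with $\cM_{1}^{\psi}$ and ${\cal O}_{(\psi_{k})}$ with ${\cal O}_{\psi}$ before applying Theorem~\ref{StetigkeitRisikofunktional}; on the non-$\Delta_{2}$ side one must make sure the counterexample sequence of Example~\ref{utilitybased2} indeed produces discontinuity in the $\psi$-weak (not merely the weak) topology, which is the content quoted from \cite{Kraetschmeretal2014}.
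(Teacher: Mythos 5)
Your proposal is correct and matches the paper's own argument, which derives the corollary as an immediate consequence of Theorem~\ref{StetigkeitRisikofunktional} (together with the identification $\cM(H^{\Psi})=\cM_{1}^{\psi}$ and ${\cal O}_{(\psi_k)}={\cal O}_{\psi}$ under the $\Delta_2$-condition) for (b)$\Rightarrow$(a), and of the shortfall-risk-measure counterexample in Example~\ref{utilitybased2} for the contrapositive of (a)$\Rightarrow$(b). Your attention to citing the topology identification before invoking Theorem~\ref{StetigkeitRisikofunktional} is exactly the bookkeeping the paper's one-line proof leaves implicit.
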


Let us emphasize as a further implication of Corollary \ref{StetigkeitRisikofunktional cor} that, if $\Psi$ does {\em not} satisfy the $\Delta_{2}$-condition (\ref{Delta2}), then we cannot apply Theorem~\ref{Key Prop - Corollary} for the $\psi$-weak topology but only for the $(\psi_k)$-weak topology.

%%%%%%%%%%%%%%%%%%%%%%%%%%%%%%%%%%%%%%%%%%%%%%%%%%%%%%%%%%%%%%%%
%%%%%%%%%%%%%%%%%%%%%%%%%%%%%%%%%%%%%%%%%%%%%%%%%%%%%%%%%%%%%%%%

\subsubsection{Robustness on parametric classes of distributions}\label{Robustness on parametric classes of distributions}

Consider the statistical model (\ref{Statistical Model - 10})--(\ref{Def Plug-in Estimator}) with specifically
\begin{equation}\label{ContinuityRiskfunctionals - Applications - 05}
    E:=\R,\qquad{\cal M}:={\cal M}(H^\Psi),\qquad T:={\cal R}_\rho,
\end{equation}
where $\rho:H^\Psi\rightarrow\R$ is any law-invariant convex risk measure. By Theorem \ref{StetigkeitRisikofunktional} we know that ${\cal R}_\rho:{\cal M}(H^\Psi)\rightarrow\R$ is $(\psi_k)$-weakly continuous for the sequence $(\psi_k)$ introduced in (\ref{Def gauge function for risk functional}), and it is clear that the set $\mathfrak{E}$ of all empirical probability measures on $\R$ is contained in ${\cal M}(H^\Psi)$. Thus Theorem~\ref{Key Prop - Corollary} yields that the sequence of estimators $(\widehat T_n)$ is robust on every w-set $M$ in ${\cal M}(H^\Psi)$. In particular, the sequence $(\widehat T_n)$ is robust on many parametric families ${\cal M}_\Theta := \{\mu_{\theta}: \theta\in \Theta\}$ of univariate distributions. This is illustrated by the following examples, which rely on Examples \ref{parametric examples for w-sets - normal}--\ref{parametric examples for w-sets - gumbel}.

\begin{examplenorm}\label{Example Distortion Risk Measures - Robustness}
The Average Value at Risk $\avatr_\alpha$ introduced in Example \ref{Example Distortion Risk Measures} is defined on $L^1$, i.e., on $H^\Psi$ with $\Psi(x)=x$. Since this $\Psi$ satisfies the $\Delta_2$-condition (\ref{Delta2}), we have ${\cal O}_{(\psi_k)}={\cal O}_\psi$ for $\psi(x)=|x|$. Thus the sequence $(\widehat T_n)$ is robust on each of the sets ${\cal N}$, $\mathcal{P}_{\alpha,x_{\min}}$ (with $\alpha>1$), $\Gamma$, and ${\cal G}$ introduced in Examples \ref{parametric examples for w-sets - normal}--\ref{parametric examples for w-sets - gumbel}.
{\hspace*{\fill}$\Diamond$\par\bigskip}
\end{examplenorm}

\begin{examplenorm}\label{Example utilitybased - Robustness}
Let $\rho_{\Psi}$ be the utility-based shortfall risk measure on $H^{\Psi}$ as introduced in Example \ref{utilitybased}, and let ${\cal N}$, $\mathcal{P}_{\alpha,x_{\min}}$ (with $\alpha>q \geq 1$), $\Gamma$, and ${\cal G}$ denote the parametric families of distributions from Examples \ref{parametric examples for w-sets - normal}--\ref{parametric examples for w-sets - gumbel}. Then the sequence $(\widehat T_n)$ is robust on
\begin{enumerate}
    \item ${\cal N}$ if there exists $\lambda>0$ such that $\Psi(x)=O(e^{\lambda x^2})$ as $x\uparrow\infty$;
    \item $\mathcal{P}_{\alpha,x_{\min}}$ if there exists $q\in [1,\alpha)$ such that $\Psi(x)=O( x^q)$ as $x\uparrow\infty$;
    \item $\Gamma$ and ${\cal G}$ if there is some $\beta\in (0,1)$ such that $\Psi(x)=O (e^{x^{\beta}})$ as $x\uparrow\infty$. \hfill$\Diamond$
\end{enumerate}
\end{examplenorm}

\begin{examplenorm}\label{onesidedmoments - Robustness}
The risk measure $\rho_p$ introduced in Example \ref{onesidedmoments} is defined on $L^p$, i.e., on $H^\Psi$ with $\Psi(x)=x^p/p$. Since this $\Psi$ satisfies the $\Delta_2$-condition (\ref{Delta2}), we have ${\cal O}_{(\psi_k)}={\cal O}_\psi$ for $\psi(x)=|x|^p/p$. Thus the sequence $(\widehat T_n)$ is robust on each of the sets ${\cal N}$, $\mathcal{P}_{\alpha,x_{\min}}$ (with $\alpha>p$), $\Gamma$, and ${\cal G}$ introduced in Examples \ref{parametric examples for w-sets - normal}--\ref{parametric examples for w-sets - gumbel}.
{\hspace*{\fill}$\Diamond$\par\bigskip}
\end{examplenorm}

%%%%%%%%%%%%%%%%%%%%%%%%%%%%%%%%%%%%%%%%%%%%%%%%%%%%%%%%%%%%%%%%
%%%%%%%%%%%%%%%%%%%%%%%%%%%%%%%%%%%%%%%%%%%%%%%%%%%%%%%%%%%%%%%%

\subsubsection{Aggregation robustness}\label{ContinuityRiskfunctionals - Applications}

For $\mu_1,\dots,\mu_d\in\mathcal{M}_1$ and $A_{d}(x_1,\ldots,x_d):=\sum_{i=1}^dx_i$ we let
$$
    \mathfrak{S}(\mu_{1},\dots,\mu_{d}):=\big\{\boldsymbol{\mu}\circ A_{d}^{-1}:\,\boldsymbol{\mu}\in\boldsymbol{{\cal M}}(d;\mu_{1},\dots,\mu_{d})\big\},
$$
where $\boldsymbol{{\cal M}}(d;\mu_{1},\dots,\mu_{d})$ denotes the Fr\'{e}chet class w.r.t.~$\mu_{1},\dots,\mu_{d}$; cf. Subsection \ref{aggregationrobustness}. As before we consider a law-invariant convex risk measure $\rho$ on $H^\Psi$ as well as the associated risk functional ${\cal R}_\rho:{\cal M}(H^\Psi)\rightarrow\R$. If $\mu_{1},\dots,\mu_{d}\in \cM(H^{\Psi})$ are regarded as distributions of single positions $Y_1,\ldots,Y_d$ of a financial portfolio, then the set $\mathfrak{S}(\mu_{1},\dots,\mu_{d})$ may be seen as the set of all possible distributions of the portfolio sum $S_d:=\sum_{i=1}^d Y_i$. It is argued by Embrechts et al.\ \cite{Embrechtsetal2014} that it is often relatively easy to  model the marginal distributions $\mu_1,\ldots,\mu_d$, while it can be difficult to obtain accurate information on the dependence structure of $Y_1,\ldots,Y_d$. This situation roughly corresponds to the setting where the marginal distributions $\mu_1,\ldots,\mu_d$ are known  but the law  of $A_d(Y_1,\ldots,Y_d)=\sum_{i=1}^dY_i$ can vary  within $\mathfrak{S}(\mu_{1},\dots,\mu_{d})$. Motivated by this issue, Embrechts et al.\ \cite{Embrechtsetal2014} raise the {question} of robustness of the empirical estimator for $\rho(S_d)= {\cal R}_\rho(\boldsymbol{\mu}\circ A_d^{-1})$ for known marginal distributions $\mu_1,\ldots,\mu_d$. More precisely, the statistical model (\ref{Statistical Model - 10})--(\ref{Def Plug-in Estimator}) is specialized to
\begin{equation}\label{ContinuityRiskfunctionals - Applications - 10}
    E:=\R,\qquad{\cal M}:=\mathfrak{S}(\mu_{1},\dots,\mu_{d}),\qquad T:={\cal R}_\rho|_{\mathfrak{S}(\mu_{1},\dots,\mu_{d})},
\end{equation}
where the observations (i.e.\ the coordinates on $\Omega=\R^\N$) should be seen as i.i.d.\ copies of $S_d$. Theorem~\ref{Key Prop - Corollary} and Proposition~\ref{Aggregationsrobustheit} imply that the sequence $(\widehat T_n)$ is robust on $\mathfrak{S}(\mu_{1},\dots,\mu_{d})$, because the risk functional $\cR_{\rho}$ is always $(\psi_k)$-weakly continuous on its domain ${\cal M}(H^{\Psi})$, according to  Theorem~\ref{StetigkeitRisikofunktional}. The crucial point is that the set $\mathfrak{S}(\mu_{1},\dots,\mu_d)$ is a w-set in $\cM(H^{\Psi}) = {\cal M}_1^{(\psi_k)}$ by Proposition~\ref{Aggregationsrobustheit}, and so the risk functional ${\cal R}_\rho$ is weakly continuous on $\mathfrak{S}(\mu_{1},\dots,\mu_{d})$. Embrechts et al.\ \cite{Embrechtsetal2014} referred to the weak continuity of the functional ${\cal R}_\rho$ on $\mathfrak{S}(\mu_{1},\dots,\mu_{d})$ as {\em aggregation robustness} of ${\cal R}_\rho$. Maybe it is even more appropriate to use the terminology aggregation robustness for the sequence of estimators $(\widehat T_n)$ in the statistical model given by (\ref{Statistical Model - 10})--(\ref{Def Plug-in Estimator}) and (\ref{ContinuityRiskfunctionals - Applications - 10}).

The above considerations are not restricted to the particular aggregation function $A_{d}(x_1,\ldots,x_d):=\sum_{i=1}^dx_i$. The latter can be replaced by any other function $A_d:\R^d\rightarrow\R$ satisfying condition (a) of Proposition \ref{Aggregationsrobustheit}. Recall that the set $M(\mu_{1},\dots,\mu_{d};A_{d})$ was defined in (\ref{Def M A d}) and that $M(\mu_{1},\dots,\mu_{d};A_{d})=\mathfrak{S}(\mu_{1},\dots,\mu_{d})$ when $A_{d}(x_1,\ldots,x_d)=\sum_{i=1}^dx_i$.

\begin{theorem}\label{aggregation robustness in general}
Let $A_{d}:\R^{d}\rightarrow\R$ be any Borel-measurable map satisfying condition (a) of Proposition \ref{Aggregationsrobustheit}, and let ${\cal R}_\rho:{\cal M}(H^\Psi)\rightarrow\R$ be the risk functional associated with any law-invariant convex risk measure $\rho$ on $H^{\Psi}$. Moreover fix $\mu_1,\ldots,\mu_d\in{\cal M}(H^\Psi)$. Then ${\cal R}_\rho|_{M(\mu_{1},\dots,\mu_{d};A_{d})}$ is weakly continuous. In particular, the sequence of estimators $(\widehat T_n)$ in the statistical model given by (\ref{Statistical Model - 10})--(\ref{Def Plug-in Estimator}) and (\ref{ContinuityRiskfunctionals - Applications - 10}) is robust.
\end{theorem}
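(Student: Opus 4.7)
The plan is to combine Proposition~\ref{Aggregationsrobustheit} (which will give that $M := M(\mu_{1},\dots,\mu_{d};A_{d})$ is a w-set in $\cM(H^\Psi)$), Theorem~\ref{StetigkeitRisikofunktional} (which gives $(\psi_k)$-weak continuity of $\cR_\rho$ on its whole domain), and Theorem~\ref{Key Prop - Corollary}(i) (which turns these ingredients into robustness of the plug-in sequence). Throughout, I would work with the sequence of gauge functions $\psi_k(x) := \Psi(k|x|)$, $k\in\N$, introduced in \eqref{Def gauge function for risk functional}, so that $\cM_1^{(\psi_k)} = \cM(H^\Psi)$. Since the target space of $A_d$ is $\R$, we have $r=1$, and the auxiliary sequence $(\widetilde\psi_k)$ of Proposition~\ref{Aggregationsrobustheit} coincides with $(\psi_k)$.

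The first step is to verify the three enumerated hypotheses of Proposition~\ref{Aggregationsrobustheit}. Each $\psi_k$ is continuous and nonnegative, and it is convex and even since it is the composition of the convex nondecreasing function $\Psi(k\,\cdot)$ on $[0,\infty)$ with the convex even function $|\cdot|$. Condition (a) holds by the assumption on $A_d$. For condition (b), given $k\in\N$ I would choose any integer $\ell_k$ with $\ell_k \geq k(d+1)c$; then monotonicity of $\Psi$ yields $\psi_k((d+1)cx) = \Psi(k(d+1)c|x|) \leq \Psi(\ell_k|x|) = \psi_{\ell_k}(x)$ for every $x\in\R$, so we may take $c_k := 1$. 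Condition (c) is immediate from the hypothesis $\mu_1,\dots,\mu_d\in\cM(H^\Psi) = \cM_1^{(\psi_k)}$ together with \eqref{Def gauge function for risk functional}.

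Proposition~\ref{Aggregationsrobustheit} then implies that $M$ is uniformly $(\psi_k)$-integrating and in particular a w-set in $\cM_1^{(\psi_k)}$. By the equivalence (a)$\Leftrightarrow$(b) in Theorem~\ref{Key Prop}, the relative $(\psi_k)$-weak and relative weak topologies coincide on $M$. Since Theorem~\ref{StetigkeitRisikofunktional} guarantees that $\cR_\rho$ is $(\psi_k)$-weakly continuous on all of $\cM_1^{(\psi_k)} \supseteq M$, its restriction $\cR_\rho|_M$ is therefore weakly continuous, establishing the first assertion of the theorem. Finally, Theorem~\ref{Key Prop - Corollary}(i), applied with $T = \cR_\rho$, ${\cal M} = M$, and the w-set $M$, yields robustness of the plug-in sequence $(\widehat T_n)$ on $M$ in the sense of Definition~\ref{definition of qualitative robustness}. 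The only genuinely new verification is the growth condition (b) of Proposition~\ref{Aggregationsrobustheit}, and this follows for free from the monotonicity of the Young function $\Psi$; hence no substantial obstacle is anticipated.
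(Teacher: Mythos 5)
Your proposal is correct and takes essentially the same route as the paper's own proof: you verify hypotheses (a)--(c) of Proposition~\ref{Aggregationsrobustheit} for $\psi_k=\Psi(k|\cdot|)$, with condition (b) obtained from monotonicity of $\Psi$ via $\psi_k((d+1)c\,x)=\Psi(k(d+1)c|x|)\le\psi_{\ell_k}(x)$ for any integer $\ell_k\ge k(d+1)c$ (in fact a cleaner rendering of the paper's display, whose subscript $k(d+1)c$ need not be an integer), and then combine the resulting w-set property of $M(\mu_1,\dots,\mu_d;A_d)$ with Theorem~\ref{StetigkeitRisikofunktional} and Theorem~\ref{Key Prop - Corollary}(i). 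The only cosmetic adjustment: in the final step apply Theorem~\ref{Key Prop - Corollary} with the full domain $\cM=\cM(H^\Psi)$, which contains the set $\mathfrak{E}$ of empirical measures as the definition of the plug-in estimators requires, and with $M(\mu_1,\dots,\mu_d;A_d)\subseteq\cM$ as the w-set, rather than setting $\cM=M$; with this reading your argument, like the paper's, goes through verbatim.
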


\begin{proof}
Recall that $\psi_{k}=\Psi(k|\cdot|)$ for $k\in\N$, and let $\ell$ be any fixed integer exceeding the real number $c$. By monotonicity of $\Psi$ we have that
$\psi_{k}((d+1) c |x|)\leq \psi_{k (d+1) c}(x)$ for all $x\in\R$. Then the first statement of Theorem \ref{aggregation robustness in general} follows immediately from Proposition \ref{Aggregationsrobustheit} along with Theorem \ref{StetigkeitRisikofunktional}. The second statement can then be derived with the help of Theorem~\ref{Key Prop - Corollary}.
\end{proof}

%%%%%%%%%%%%%%%%%%%%%%%%%%%%%%%%%%%%%%%%%%%%%%%%%%%%%%%%%%%%%%%%
%%%%%%%%%%%%%%%%%%%%%%%%%%%%%%%%%%%%%%%%%%%%%%%%%%%%%%%%%%%%%%%%
%%%%%%%%%%%%%%%%%%%%%%%%%%%%%%%%%%%%%%%%%%%%%%%%%%%%%%%%%%%%%%%%
%%%%%%%%%%%%%%%%%%%%%%%%%%%%%%%%%%%%%%%%%%%%%%%%%%%%%%%%%%%%%%%%
%%%%%%%%%%%%%%%%%%%%%%%%%%%%%%%%%%%%%%%%%%%%%%%%%%%%%%%%%%%%%%%%
%%%%%%%%%%%%%%%%%%%%%%%%%%%%%%%%%%%%%%%%%%%%%%%%%%%%%%%%%%%%%%%%

\section{Proofs of results from Section~\ref{Sec comparison of weak topologies}}\label{Sec Proofs of main results}

%%%%%%%%%%%%%%%%%%%%%%%%%%%%%%%%%%%%%%%%%%%%%%%%%%%%%%%%%%%%%%%%
%%%%%%%%%%%%%%%%%%%%%%%%%%%%%%%%%%%%%%%%%%%%%%%%%%%%%%%%%%%%%%%%
%%%%%%%%%%%%%%%%%%%%%%%%%%%%%%%%%%%%%%%%%%%%%%%%%%%%%%%%%%%%%%%%

\subsection{Proof of Lemma \ref{Charakterisierung psi-schwache Topologie}}

By construction, a base for the $(\psi_k)$-weak topology is given by  sets of the form $U_{k_1}\cap\cdots \cap U_{k_n}\cap \mathcal{M}_{1}^{(\psi_k)}$, where $n\in\mathbb{N}$, $k_1,\dots, k_n\in\mathbb{N}$, and each $U_{k_i}$ belongs to a base for the $\psi_{k_i}$-weak topology  on $\mathcal{M}_{1}^{\psi_{k_i}}$. Since the $\psi_{k}$-weak topology on $\mathcal{M}_{1}^{\psi_{k}}$ is metrizable by a separable metric by  \cite[Corollary A.45]{FoellmerSchied2011} and hence admits a countable base, it follows that the $(\psi_k)$-weak topology  also has  a countable base. Then it is known that a subset of $\mathcal{M}_{1}^{(\psi_{k})}$ is closed w.r.t. the $(\psi_k)$-weak topology if and only if together with any sequence it contains all its accumulation points; cf.\ Theorem 1.6.14 in \cite{Engelking1989}. Hence under the equivalence of (a) and (b) the $(\psi_k)$-weak topology is obviously metrizable by $d_{(\psi_{k})}$ as defined in the display of Lemma \ref{Charakterisierung psi-schwache Topologie}.

As a metrizable topology with countable base the $(\psi_{k})$-weak topology is separable. Moreover, by \cite[Corollary A.45]{FoellmerSchied2011} the $\psi_{k}$-weak topology is completely and separably metrizable by say $d_{k}$ for every $k\in\N$. Then the equivalence of (a) and (b) implies that the metric $d$ on $\cM_{1}^{(\psi_{k})}$ defined by $d(\mu,\nu):=\sum_{k=1}^{\infty} (d_{k}(\mu,\nu)\wedge 1)\,2^{-k}$ metrizes ${\cal O}_{(\psi_{k})}$. This metric is separable by separability of ${\cal O}_{(\psi_{k})}$. Now, every $d$-Cauchy sequence $(\mu_{n})$ is a $d_{k}$-Cauchy sequence for any $k\in\N$. Then by completeness of the metrics $d_{k}$ $(k\in\N)$, we may find for any $k\in\N$ some $\nu_{k}\in\cM_{1}^{\psi_{k}}$ such that $d_{k}(\mu_{n},\nu_{k})\to 0$ as $n\to\infty.$ Since each $\psi_{k}$-weak topology is finer than the weak topology, we obtain $\mu_{n}\to\nu_{k}$ as $n\to\infty$ for each $k\in\N$. Hence by Hausdorff property of the weak topology, all the $\nu_{k}$ coincide, and thus by definition of the metric $d$ we have $d(\mu_{n},\mu)\to 0$ for some $\mu\in\cM_{1}^{(\psi_{k})}$. Thus we have shown that $d$ is a complete metric. In particular, ${\cal M}_1^{(\psi_k)}$ equipped with ${\cal O}_{(\psi_{k})}$ is a Polish space. So it is left to show the equivalence of (a) and (b).

The implication (a)$\Rightarrow$(b) is obvious. Conversely, let statement (b) be satisfied. We have to show that for every $f\in {\cal C}_{\psi_k}$, $k\in\N$, and $\varepsilon>0$ there exists some $n_0\in\N$ such that
\begin{equation}\label{Charakterisierung psi-schwache Topologie - Proof - 10}
    \Big|\int f\,d\mu_n-\int f\,d\mu_0\Big|\le\varepsilon\qquad\mbox{for all }n\ge n_0.
\end{equation}
The left hand side of (\ref{Charakterisierung psi-schwache Topologie - Proof - 10}) is bounded above by
\begin{equation}\label{Charakterisierung psi-schwache Topologie - Proof - 20}
    \Big|\int f\eins_{\{|f|\le a\}}\,d\mu_n-\int f\eins_{\{|f|\le a\}}\,d\mu_0\Big|+\Big|\int f\eins_{\{|f|>a\}}\,d\mu_n-\int f\eins_{\{|f|>a\}}\,d\mu_0\Big|
\end{equation}
for every $a>0$. For notational simplicity we set $\widetilde\psi_k:=1+\psi_k$. Then the second summand in (\ref{Charakterisierung psi-schwache Topologie - Proof - 20}) is bounded above by
\begin{equation}\label{Charakterisierung psi-schwache Topologie - Proof - 30}
    C_{f,k}\int\widetilde\psi_k\eins_{\{\widetilde\psi_k>a\}}\,d\mu_n+C_{f,k}\int\widetilde\psi_k\eins_{\{\widetilde\psi_k>a\}}\,d\mu_0
\end{equation}
for some suitable constant $C_{f,k}>0$ satisfying $|f(x)|\le C_{f,k}\widetilde\psi_k(c)$ for all $x\in E$. Now we can choose $a>0$ so large that the second summand in (\ref{Charakterisierung psi-schwache Topologie - Proof - 30}) is at most $\varepsilon/5$. The first summand in is bounded above by
\begin{equation}\label{Charakterisierung psi-schwache Topologie - Proof - 40}
    C_{f,k}\Big|\int\widetilde\psi_k\eins_{\{\widetilde\psi_k>a\}}\,d\mu_n-\int\widetilde\psi_k\eins_{\{\widetilde\psi_k>a\}}\,d\mu_0\Big|+C_{f,k}\int\widetilde\psi_k\eins_{\{\widetilde\psi_k>a\}}\,d\mu_0
\end{equation}
As see above, the second summand in (\ref{Charakterisierung psi-schwache Topologie - Proof - 40}) is at most $\varepsilon/5$. The first summand in (\ref{Charakterisierung psi-schwache Topologie - Proof - 40}) is bounded above by
\begin{equation}\label{Charakterisierung psi-schwache Topologie - Proof - 50}
    C_{f,k}\Big|\int\widetilde\psi_k\,d\mu_n-\int\widetilde\psi_k\,d\mu_0\Big|+C_{f,k}\Big|\int\widetilde\psi_k\eins_{\{\widetilde\psi_k\le a\}}\,d\mu_n-\int\widetilde\psi_k\eins_{\{\widetilde\psi_k\le a\}}\,d\mu_0\Big|.
\end{equation}
The first summand in (\ref{Charakterisierung psi-schwache Topologie - Proof - 50}) converges to $0$ as $n\to\infty$ by assumption. Thus we can find $n_0\in\N$ such that it is bounded above by $\varepsilon/5$ for every $n\ge n_0$. Since $\mu_0\circ \widetilde\psi_k^{-1}$ as a probability measure on the real line has at most countably many atom, we may and do assume that $a>0$ is chosen such that $\mu_0[\{\widetilde\psi_k=a\}]=0$. Since $\mu_n\to\mu_0$ weakly by assumption, it follows by the portmanteau theorem that the second summand in (\ref{Charakterisierung psi-schwache Topologie - Proof - 50}) converges to $0$ as $n\to\infty$. By possibly increasing $n_0$ we obtain that the second summand in (\ref{Charakterisierung psi-schwache Topologie - Proof - 50}) is at most $\varepsilon/5$ for all $n\ge n_0$. So far we have shown that the second summand in (\ref{Charakterisierung psi-schwache Topologie - Proof - 20}) is bounded above by $4\varepsilon/5$ for all $n\ge n_0$. Using the same arguments as for second summand in (\ref{Charakterisierung psi-schwache Topologie - Proof - 50}) and possibly increasing $n_0$ further, we moreover obtain that the first summand in (\ref{Charakterisierung psi-schwache Topologie - Proof - 20}) is bounded above by $\varepsilon/5$ for all $n\ge n_0$. That is, we indeed arrive at (\ref{Charakterisierung psi-schwache Topologie - Proof - 10}).\hfill\proofendsign

%%%%%%%%%%%%%%%%%%%%%%%%%%%%%%%%%%%%%%%%%%%%%%%%%%%%%%%%%%%%%%%%
%%%%%%%%%%%%%%%%%%%%%%%%%%%%%%%%%%%%%%%%%%%%%%%%%%%%%%%%%%%%%%%%
%%%%%%%%%%%%%%%%%%%%%%%%%%%%%%%%%%%%%%%%%%%%%%%%%%%%%%%%%%%%%%%%

\subsection{Proof of Theorem~\ref{Key Prop}}

First we shall provide the following characterization of relative compact subsets for the $(\psi_{k})$-weak topology, which will be needed in the proof of Lemma~\ref{compact lemma multivariate}. For the $\psi$-weak topology this characterization is already known from Corollary A.47 in \cite{FoellmerSchied2011}.

\begin{lemma}\label{psiweakcompactness}
Let $(\psi_{k})$ be any sequence of gauge functions and $M\subseteq{\cal M}_1^{(\psi_{k})}$ be given. Then the following conditions are equivalent:
\begin{enumerate}
    \item $M$ is relatively compact for the $(\psi_{k})$-weak topology.
    \item For every $k\in\bN$, $M$ is relatively compact for the $\psi_k$-weak topology.
    \item For every $k\in \bN$ and $\varepsilon>0$ there exists a compact set $K_k\subseteq E$ such that
    $$
        \sup_{\mu\in M}\int_{K_k^{\sf c}}\psi_k\,d\mu\le\eps.
    $$
    \item For every $k\in\bN$ there exists a measurable function $\phi_k:E\to\R_+$ such that each set $\{\phi_k\le n\psi_k\}$, $n\in\N$, is compact in $E$ and such that
    $$
        \sup_{\mu\in M}\int\phi_k\,d\mu<\infty.
    $$
\end{enumerate}
\end{lemma}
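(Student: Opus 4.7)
The chain of equivalences (b)$\Leftrightarrow$(c)$\Leftrightarrow$(d) follows directly from the already known characterization of $\psi$-weak relative compactness for a single gauge function given by Corollary~A.47 of \cite{FoellmerSchied2011}, applied termwise to each $\psi_k$. The substantive content is therefore the equivalence (a)$\Leftrightarrow$(b), which compares the joint $(\psi_k)$-weak topology on ${\cal M}_1^{(\psi_k)}$ with the single-index $\psi_k$-weak topologies on the (generally larger) ambient spaces ${\cal M}_1^{\psi_k}$.

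For (a)$\Rightarrow$(b), I would observe that for each $k\in\N$ the canonical inclusion ${\cal M}_1^{(\psi_k)}\hookrightarrow{\cal M}_1^{\psi_k}$ is continuous from the $(\psi_k)$-weak topology to the $\psi_k$-weak topology, because ${\cal C}_{\psi_k}$ is one of the classes of test functions that defines the $(\psi_k)$-weak topology. Consequently, the image of any $(\psi_k)$-weakly compact hull of $M$ is a $\psi_k$-weakly compact set containing $M$. By the Hausdorff property of the $\psi_k$-weak topology, this image is $\psi_k$-weakly closed in ${\cal M}_1^{\psi_k}$, hence contains the $\psi_k$-weak closure of $M$ there, yielding~(b).

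For (b)$\Rightarrow$(a), the $(\psi_k)$-weak topology is metrizable by Lemma~\ref{Charakterisierung psi-schwache Topologie}, so it is enough to show sequential relative compactness. Given $(\mu_n)\subseteq M$, I would run a standard diagonal extraction using that each $\psi_k$-weak topology is itself metrizable: first extract a subsequence converging $\psi_1$-weakly, then a sub-subsequence converging $\psi_2$-weakly, and so on, and finally take the diagonal. Since every $\psi_k$-weak topology refines the (Hausdorff) weak topology, all these limits coincide with a single weak limit $\mu$; and because $\psi_k\in{\cal C}_{\psi_k}$, we have $\int\psi_k\,d\mu=\lim_j\int\psi_k\,d\mu_{n_j}<\infty$ for every $k$, so $\mu\in{\cal M}_1^{(\psi_k)}$. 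The equivalence (a)$\Leftrightarrow$(b) of Lemma~\ref{Charakterisierung psi-schwache Topologie} then upgrades simultaneous $\psi_k$-weak convergence of the diagonal subsequence (for every $k$) to $(\psi_k)$-weak convergence toward $\mu$.

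The main point requiring care is the bookkeeping in the diagonal extraction and the careful distinction between relative compactness in ${\cal M}_1^{(\psi_k)}$ versus in the larger ambient space ${\cal M}_1^{\psi_k}$; beyond this, the proof rests entirely on Lemma~\ref{Charakterisierung psi-schwache Topologie} and the single-gauge version of the compactness criterion in \cite{FoellmerSchied2011}.
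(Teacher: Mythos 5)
Your proposal is correct, and while the routine chain (b)$\Leftrightarrow$(c)$\Leftrightarrow$(d) follows the paper verbatim (termwise application of Corollary A.47 in \cite{FoellmerSchied2011}), your treatment of the core implication (b)$\Rightarrow$(a) is genuinely different from, and simpler than, the paper's. The paper forms the $\psi_k$-weak closures $M_k$ of $M$ in ${\cal M}_1^{\psi_k}$, invokes Tychonoff's theorem to get compactness of $\times_{k=1}^\infty M_k$ in the metrizable product topology, extracts from the constant-in-$k$ sequence $\boldsymbol{\mu}_n(k):=\mu_n$ a componentwise convergent subsequence, and then identifies the component limits $\boldsymbol{\mu}(k)=\boldsymbol{\mu}(1)$ by a rather laborious detour: it verifies via condition (c) that $M$ is relatively compact for the $(\psi_1+\psi_k)$-weak topology and extracts yet another subsequence converging in that stronger topology. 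Your diagonal extraction replaces the Tychonoff step (over a countable metrizable product the two are anyway interchangeable), and, more substantially, you identify all the limits in one line: each $\psi_k$-weak topology refines the Hausdorff weak topology, so the $\psi_k$-weak limits of the diagonal sequence all coincide with the single weak limit $\mu$. This shortcut is fully legitimate — indeed the paper itself uses exactly this Hausdorff argument in the completeness part of the proof of Lemma \ref{Charakterisierung psi-schwache Topologie} — so the paper's auxiliary $\psi_1+\psi_k$ compactness step buys nothing your argument lacks. Your remaining steps are sound: the limit $\mu$ lies in ${\cal M}_1^{\psi_k}$ by construction (so $\int\psi_k\,d\mu<\infty$ for every $k$, whence $\mu\in{\cal M}_1^{(\psi_k)}$), and passing from simultaneous $\psi_k$-weak convergence to $(\psi_k)$-weak convergence via the equivalence (a)$\Leftrightarrow$(b) of Lemma \ref{Charakterisierung psi-schwache Topologie} is valid (in fact it already follows from the initial-topology definition). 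Finally, your (a)$\Rightarrow$(b) via continuity of the inclusion ${\cal M}_1^{(\psi_k)}\hookrightarrow{\cal M}_1^{\psi_k}$, compactness of continuous images, and closedness of compacta in Hausdorff spaces is a clean topological substitute for the implication the paper simply declares obvious; just phrase the ``compact hull'' precisely as the $(\psi_k)$-weak closure of $M$ in ${\cal M}_1^{(\psi_k)}$, and note at the end that the $\psi_k$-weak closure of $M$, being a closed subset of a compact set, is itself compact.
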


\begin{proof}
(b)$\Leftrightarrow$(c)$\Leftrightarrow$(d): These implications follow immediately from Corollary A.47 in \cite{FoellmerSchied2011}.

(a)$\Leftrightarrow$(b): Since the $(\psi_{k})$-weak and the $\psi_{k}$-weak topologies are metrizable, for any of these topologies the relatively compact subsets are exactly the relatively sequentially compact ones. Then the implication (a)$\Rightarrow$(b) is obvious. %{\color{magenta}Hier Abweichung vom Beweis von Lemma~\ref{Psi topology lemma 1}}
To prove the implication (b)$\Rightarrow$(a), let $M$ be relatively compact for the $\psi_k$-weak topology for each $k\in\bN$. In particular, every $\psi_{k}$-weak closure $M_{k}$ of $M$ in ${\cal M}_1^{\psi_{k}}$ is $\psi_{k}$-weakly compact. Then by Tychonoff's theorem the set $\times_{k=1}^\infty M_{k}$ is a compact subset of $\times_{k=1}^\infty{\cal M}_1^{\psi_{k}}$ for the product topology generated by the $\psi_{k}$-weak topologies. Notice that the product topology is metrizable by the metric
$$
    d_{{\rm prod}}(\boldsymbol{\mu},\boldsymbol{\nu}):= \sum_{k=1}^{\infty} 2^{-k} \big(d_{\psi_{k}}(\boldsymbol{\mu}(k),\boldsymbol{\nu}(k))\wedge 1\big).
$$
Let $(\mu_n)$ be any sequence in $M$. We will construct a $(\psi_k)$-weakly converging subsequence. For every $n\in\N$ we obtain an element $\boldsymbol{\mu}_n\in\times_{k=1}^\infty M_{k}$ by setting $\boldsymbol{\mu}_{n}(k):=\mu_n$, $k\in\N$. By compactness of $\times_{n=1}^\infty M_{k}$ we may extract a subsequence $(\boldsymbol{\mu}_{n(j)})$ from $(\boldsymbol{\mu}_{n})$ that converges to some $\boldsymbol{\mu}\in\times_{k=1}^\infty\cM_1^{\psi_k}$, i.e.,  $d_{{\rm prod}}(\boldsymbol{\mu}_{n(j)},\boldsymbol{\mu})\to 0$. In particular, $d_{\psi_{k}}(\boldsymbol{\mu}_{n(j)}(k),\boldsymbol{\mu}(k))\to 0$ for every $k\in\N$, i.e.,  $(\boldsymbol{\mu}_{n(j)}(k))$ converges $\psi_{k}$-weakly to $\boldsymbol{\mu}(k)$ for every $k\in\N$. Now, if we can show that $\boldsymbol{\mu}(k)=\boldsymbol{\mu}(1)=:\mu$ holds for every $k\in\N$, then it follows that $\mu_{n(j)}\to\mu$ $\psi_k$-weakly for every $k\in\N$ and thus $\mu_{n(j)}\to\mu$ $(\psi_k)$-weakly. In the rest of the proof we show that $\boldsymbol{\mu}(k)=\boldsymbol{\mu}(1)$ holds for every $k\in\N$.

For fixed $k\in\N$, the set $M$ is a subset of $\cM_1^{\psi_{1} + \psi_k}.$ Since $M$ is also a relatively $\psi_{i}$-compact subset of $\cM_1^{\psi_i}$ we may find by (c)
for every $\varepsilon > 0$ some compact subset $K_{i}\subseteq E$ such that
$$
    \sup_{\mu\in M}\int_{K_{i}^{\sf c}} \psi_{i}~d\mu\leq\varepsilon/2
$$
for $i\in\{1,k\}$. Then $K := K_{1}\cup K_{k}$ is a compact subset of $E$ such that
$$
    \sup_{\mu\in M}\int_{K^{\sf c}}(\psi_{1} + \psi_{k})\,d\mu \le \sup_{\mu\in M}\int_{K_{1}^{\sf c}} \psi_{1}\,d\mu + \sup_{\mu\in M}\int_{K_{k}^{\sf c}} \psi_{k}\,d\mu\le\varepsilon.
$$
Hence in view of Corollary A.47 in \cite{FoellmerSchied2011} the set $M$ is also a relatively compact subset of $\cM_1^{\psi_{1,k}}$ for the $\psi_{1,k}$-weak topology with $\psi_{1,k}:= \psi_{1} + \psi_k$. Therefore we may select a subsequence $(\mu_{n(j(\iota))})$ of $(\mu_{n(j)})$ which converges $\psi_{1,k}$-weakly to some $\mu\in \cM_1^{\psi_{1,k}}$. In particular, for every $f\in C_{\psi_{1}}\cup C_{\psi_{k}}$ we clearly have $f\in C_{\psi_{1,k}}$ and thus $\int f\,d\mu_{n(j(\iota))}\to\int f\,d\mu$. This means that $(\mu_{n(j(\iota))})$ converges to $\mu$ w.r.t.\ both the $\psi_{1}$-weak topology and the $\psi_{k}$-weak topology. This implies $\boldsymbol{\mu}(1) = \mu = \boldsymbol{\mu}(k)$, and the proof is complete.
\end{proof}

In the case where each set $\{\psi\le n\}$, $n\in\N$, is relatively compact in $E$, a set $M\subseteq{\cal M}_1^\psi$ is relatively compact for the $\psi$-weak topology if and only if it is uniformly $\psi$-integrating; cf.\ Lemma 3.4 in \cite{Zaehle2016}. The following Lemma shows that the same is true for general gauge functions (and for sequences $(\psi_{k})$ of general gauge functions) when $M$ is assumed to be relatively compact for the weak topology.

\begin{lemma}\label{compact lemma multivariate}
Let $(\psi_{k})$ be any sequence of gauge functions and $M\subseteq\cM_1$ be given. Then the following conditions are equivalent:
\begin{enumerate}
    \item $M$ is uniformly $(\psi_{k})$-integrating and relatively compact for the weak topology.
    \item $M$ is relatively compact for the $(\psi_{k})$-weak topology.
\end{enumerate}
\end{lemma}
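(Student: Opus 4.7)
The strategy is to reduce both implications to the equivalence of conditions (a) and (c) in Lemma~\ref{psiweakcompactness}, which characterizes relative compactness in terms of a tail integrability condition, combined with Prohorov's theorem on the Polish space $E$.

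For the direction (b)$\Rightarrow$(a), suppose $M$ is relatively compact for the $(\psi_k)$-weak topology. Then it is automatically relatively compact for the weak topology since the latter is coarser. Fix $k\in\N$ and $\varepsilon>0$; by Lemma~\ref{psiweakcompactness} (in particular the equivalence of (a) and (c)), there exists a compact set $K_k\subseteq E$ with $\sup_{\mu\in M}\int_{K_k^{\sf c}}\psi_k\,d\mu\le\varepsilon$. Since $\psi_k$ is continuous on the compact set $K_k$, the constant $a_k:=\sup_{x\in K_k}\psi_k(x)$ is finite, and for every $a>a_k$ one has $\{\psi_k\ge a\}\subseteq K_k^{\sf c}$, hence $\sup_{\mu\in M}\int\psi_k\eins_{\{\psi_k\ge a\}}\,d\mu\le\varepsilon$. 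This gives uniform $\psi_k$-integrability for every $k$, which by the remark following Definition~\ref{Def Uniformly Psi Integrating} is equivalent to uniform $(\psi_k)$-integrability.

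For the converse (a)$\Rightarrow$(b), assume that $M$ is weakly relatively compact and uniformly $(\psi_k)$-integrating. Again by Lemma~\ref{psiweakcompactness}, it suffices to verify condition (c) of that lemma for each single $k\in\N$. Fix $k\in\N$ and $\varepsilon>0$. By uniform $\psi_k$-integrability, we may pick $a>0$ such that
\[
\sup_{\mu\in M}\int\psi_k\eins_{\{\psi_k\ge a\}}\,d\mu\le \varepsilon/2.
\]
Since $E$ is Polish and $M$ is weakly relatively compact, Prohorov's theorem yields a compact set $K\subseteq E$ with $\sup_{\mu\in M}\mu(K^{\sf c})\le \varepsilon/(2a)$. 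Splitting according to whether $\psi_k<a$ or $\psi_k\ge a$, we obtain for every $\mu\in M$
\[
\int_{K^{\sf c}}\psi_k\,d\mu \;\le\; a\,\mu(K^{\sf c}) + \int\psi_k\eins_{\{\psi_k\ge a\}}\,d\mu \;\le\; \varepsilon/2+\varepsilon/2 \;=\; \varepsilon.
\]
Hence condition (c) of Lemma~\ref{psiweakcompactness} is satisfied for every $\psi_k$, and the lemma provides $(\psi_k)$-weak relative compactness of $M$.

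No step looks like a serious obstacle: once Lemma~\ref{psiweakcompactness} is invoked to reduce the statement to a tail estimate for each individual $\psi_k$, the only real work is the elementary splitting argument in the forward direction, whose only ingredient beyond the definitions is tightness via Prohorov. The mild subtlety is noting that continuity of $\psi_k$ on the compact set $K_k$ in (b)$\Rightarrow$(a) allows us to convert a spatial tail bound $\int_{K_k^{\sf c}}\psi_k\,d\mu$ into a level-set tail bound $\int\psi_k\eins_{\{\psi_k\ge a\}}\,d\mu$, thereby matching Definition~\ref{Def Uniformly Psi Integrating}.
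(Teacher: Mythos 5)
Your proof is correct, and for the direction (a)$\Rightarrow$(b) it is essentially the paper's argument: uniform $\psi_k$-integrability supplies the level $a$, Prohorov's theorem supplies a compact set, the integral over the complement is split at the level $a$, and one concludes via the implication (c)$\Rightarrow$(a) of Lemma~\ref{psiweakcompactness}. (The paper packages the split slightly differently, taking the compact set $K_k:=C_k\cap\{\psi_k\le a_k\}$ and decomposing its complement, but the estimate is the same.) For the direction (b)$\Rightarrow$(a) you genuinely deviate: the paper passes through the implication (a)$\Rightarrow$(b) of Lemma~\ref{psiweakcompactness} and then quotes Lemma A.2 of \cite{Kraetschmeretal2014} to get uniform $\psi_k$-integrability from relative $\psi_k$-weak compactness, whereas you extract condition (c) of Lemma~\ref{psiweakcompactness} directly and convert the spatial tail bound $\int_{K_k^{\sf c}}\psi_k\,d\mu\le\varepsilon$ into the level-set bound via the elementary observation that $a_k:=\sup_{K_k}\psi_k<\infty$ forces $\{\psi_k\ge a\}\subseteq K_k^{\sf c}$ for $a>a_k$. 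Your route is more self-contained (it removes the external citation at the cost of two lines), while the paper's is shorter on the page; both are sound, and your version has the mild pedagogical advantage of making visible exactly where continuity of the gauge functions enters.
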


\begin{proof}
(a)$\Rightarrow$(b): Let $k\in\N$ and $\varepsilon>0$ be given. Since $M$ is assumed to be uniformly $(\psi_k)$-integrating, there exists $a_{k}>0$ such that $\sup_{\mu\in M}\int\psi_{k}\eins_{\{\psi_{k}>a_{k}\}}\,d\mu\le \varepsilon/2$. Since $M$ is assumed to be weakly relatively compact, we moreover obtain by Prohorov's theorem a compact set $C_k\subseteq E$ such that $\sup_{\mu\in M}\mu[C_k^{\sf c}]\le\varepsilon/(2a_{k})$. The set $K_k:=C_k\cap\{\psi_{k}\le a_{k}\}$ is a compact subset of $E$ and satisfies $K_k^{\sf c}= \{\psi_k>a_{k}\}\cup(C_{k}^{\sf c}\cap\{\psi_k\le a_{k}\})$. Hence,
$$
    \sup_{\mu\in M}\int_{K_k^{\sf c}}\psi_{k}\,d\mu \,\le\, \sup_{\mu\in M}\int\psi_{k}\eins_{\{\psi_{k}>a_{k}\}}\,d\mu + \sup_{\mu\in M}\int_{C_k^{\sf c}}\psi_{k}\eins_{\{\psi_{k}\le a_{k}\}}\,d\mu \,\le\, \varepsilon.
$$
It follows by the implication (c)$\Rightarrow$(a) of Lemma~\ref{psiweakcompactness} that $M$ is relatively compact for the $(\psi_k)$-weak topology.

(b)$\Rightarrow$(a): By the implication (a)$\Rightarrow$(b) of Lemma~\ref{psiweakcompactness} the set $M$ is $\psi_{k}$-weakly relatively compact for each $k\in\N$. Hence $M$ is uniformly $\psi_{k}$-integrating for each $k\in\N$ due to Lemma A.2 in \cite{Kraetschmeretal2014}. Moreover, relative compactness of $M$ for the weak topology follows from the fact that the weak topology is coarser than the $(\psi_{k})$-weak topology.
\end{proof}

\bigskip

{\bf Proof of Theorem~\ref{Key Prop}:} (b)$\Rightarrow$(c): Let $M_0\subseteq M$ be weakly compact, and fix $\varepsilon>0$ and $k\in\N$. By assumption there exists for every $\mu\in M_0$ some weakly open neighborhood $U_\mu$ of $\mu$ and some $a_\mu>0$ such that $\int\psi_{k}\eins_{\{\psi_{k}\ge a_\mu\}}\,d\nu<\varepsilon$ for all $\nu\in U_\mu\cap M$. By weak compactness of $M_0$ we can extract a finite cover of $M_0$ consisting of such neighborhoods $U_{\mu_1},\ldots,U_{\mu_m}$ (with $\mu_1,\dots,\mu_m\in M$), and it follows that $\sup_{\nu\in M_0}\int\psi_k\eins_{\{\psi_k\ge a\}}\,d\nu\le\varepsilon$ if we take $a:=\max_{i=1,\ldots,m}a_{\mu_i}$.

(c)$\Rightarrow$(b): Let us suppose by way of contradiction that there exist $\mu\in M$, $k\in\N$, $\varepsilon>0$, and a sequence $(\nu_n)$ in $M$ such that $\nu_n\to\mu$ weakly but $\int\psi_{k}\eins_{\{\psi_{k}\ge n\}}\,d\nu_n\ge\varepsilon$ for all $n$. Then $\{\nu_1,\nu_2,\dots\}\cup\{\mu\}$ is weakly compact and not uniformly $(\psi_{k})$-integrating. This gives a contradiction.

(c)$\Rightarrow$(a):  Since both topologies are metrizable, it suffices to show that they coincide on any given weakly compact set $M_0\subseteq M$. By (c) and Lemma~\ref{compact lemma multivariate}, $M_0$ is compact for the $(\psi_{k})$-weak topology, and so the two topologies coincide on $M_0$ by Lemma \ref{relatively compact w set lemma}.

(a)$\Rightarrow$(c): Every weakly compact subset of $M$ is also $(\psi_{k})$-weakly compact due to (a), and hence uniformly $(\psi_{k})$-integrating by Lemma~\ref{compact lemma multivariate}.

(c)$\Leftrightarrow$(d): The implication (c)$\Rightarrow$(d) is obvious. Conversely suppose by way of contradiction that (d) holds but that there is a weakly compact $M_0\subseteq M$ that is not uniformly $(\psi_k)$-integrating. Then there exist $k\in\N$, $\varepsilon>0$, and a sequence $(\mu_n)$ in $M$ such that $\int\psi_k\eins_{\{\psi_k\ge n\}}\,d\mu_n\ge\varepsilon$ for all $n$. By selecting a weakly convergent subsequence we arrive at a contradiction to (d).

(a)$\Leftrightarrow$(e): This equivalence is obvious since both topologies are metrizable.
\hfill\proofendsign

%%%%%%%%%%%%%%%%%%%%%%%%%%%%%%%%%%%%%%%%%%%%%%%%%%%%%%%%%%%%%%%
%%%%%%%%%%%%%%%%%%%%%%%%%%%%%%%%%%%%%%%%%%%%%%%%%%%%%%%%%%%%%%%%
%%%%%%%%%%%%%%%%%%%%%%%%%%%%%%%%%%%%%%%%%%%%%%%%%%%%%%%%%%%%%%%%

\subsection{Proof of Theorem~\ref{Key Prop - Corollary}}

%%%%%%%%%%%%%%%%%%%%%%%%%%%%%%%%%%%%%%%%%%%%%%%%%%%%%%%%%%%%%%%
%%%%%%%%%%%%%%%%%%%%%%%%%%%%%%%%%%%%%%%%%%%%%%%%%%%%%%%%%%%%%%%%

\subsubsection{Proof of part (i)}

The proof of this part is organized as follows. Below we will show that conditions (a)--(b) of Lemma~\ref{hampel-huber generalized} and conditions (c)--(d) of Lemma~\ref{hampel-huber generalized - finite sample} are satisfied for every w-set $M\subseteq{\cal M}$ ($\subseteq{\cal M}_1^{(\psi_k)}$). Then, if for any w-set $M\subseteq\cM$ the functional $T$ is continuous at every $\mu\in M$ for the relative $(\psi_{k})$-weak topology ${\cal O}_{(\psi_{k})}\cap\cM$, the $(\psi_k)$-robustness of the sequence $(\widehat T_n)$ on $M$ is a consequence of the two lemmas and the fact that ${\cal O}_{\rm w}\cap M={\cal O}_{(\psi_k)}\cap M$ for every w-set $M$.

\begin{lemma}\label{hampel-huber generalized}
Let $M\subseteq{\cal M}$ and assume that the following two conditions hold:
\begin{enumerate}
    \item $T:{\cal M}\rightarrow\Sigma$ is $(d_{(\psi_k)},d_\Sigma)$-continuous at every $\mu\in M$.
    \item For every $\mu\in M$, $\varepsilon>0$, and $\eta>0$ there are some $\delta>0$ and $n_0\in\N$ such that
    $$
        \nu\in M,\quad d_{(\psi_k)}(\mu,\nu)\le\delta\quad\Longrightarrow\quad \pr^{\nu}\big[d_{(\psi_k)}(\widehat m_n,\nu)\ge\eta\big]\le\varepsilon \quad\mbox{for all }n\ge n_0.
    $$
\end{enumerate}
Then for every $\mu\in M$ and $\varepsilon>0$ there exist $n_0\in\N$ and an open neighborhood $U_{(\psi_k)}=U_{(\psi_k)}(\mu,\varepsilon;M)$ of $\mu$ for the relative $(\psi_k)$-weak topology ${\cal O}_{(\psi_k)}\cap M$ such that
$$
    \nu\in U_{(\psi_k)}\quad\Longrightarrow\quad \pi(\pr^{\mu}\circ\widehat T_n^{\,-1},\pr^{\nu}\circ\widehat T_n^{\,-1})\le\varepsilon \quad\mbox{for all }n\ge n_0.
$$
\end{lemma}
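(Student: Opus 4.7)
The plan is to bound the Prohorov distance between $\pr^\mu\circ\widehat T_n^{-1}$ and $\pr^\nu\circ\widehat T_n^{-1}$ by inserting the point mass $\delta_{T(\mu)}$ as a pivot via the triangle inequality, so that it suffices to show
\[
\pi\big(\pr^\rho\circ\widehat T_n^{-1},\delta_{T(\mu)}\big)\le\varepsilon/2\qquad\text{for }\rho\in\{\mu,\nu\}
\]
whenever $\nu$ lies in a suitable $(\psi_k)$-weak neighborhood of $\mu$ and $n$ is large enough. Here I will use the standard fact that the Prohorov distance on $\Sigma$ satisfies $\pi(\mathrm{law}(X),\delta_y)\le r$ whenever $\pr[d_\Sigma(X,y)>r]\le r$, so that the left-hand side is controlled by the probability of the event $\{d_\Sigma(\widehat T_n,T(\mu))>\varepsilon/2\}$ under $\pr^\rho$.

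To bound this event, I would first invoke assumption (a): by continuity of $T$ at $\mu$ with respect to $d_{(\psi_k)}$, there exists $\eta>0$ such that $d_\Sigma(T(\mu),T(\rho))\le \varepsilon/2$ for every $\rho\in{\cal M}$ with $d_{(\psi_k)}(\mu,\rho)\le\eta$. Consequently
\[
\{d_\Sigma(\widehat T_n,T(\mu))>\varepsilon/2\}\subseteq\{d_{(\psi_k)}(\widehat m_n,\mu)>\eta\}.
\]
If furthermore $\nu\in M$ satisfies $d_{(\psi_k)}(\mu,\nu)\le\eta/2$, then the triangle inequality in $d_{(\psi_k)}$ gives
\[
\{d_{(\psi_k)}(\widehat m_n,\mu)>\eta\}\subseteq\{d_{(\psi_k)}(\widehat m_n,\nu)>\eta/2\}.
\]
Now apply assumption (b) with the parameters $(\mu,\varepsilon/2,\eta/2)$ to obtain $\delta>0$ and $n_0\in\N$ such that $\pr^\nu[d_{(\psi_k)}(\widehat m_n,\nu)\ge\eta/2]\le\varepsilon/2$ holds for every $\nu\in M$ with $d_{(\psi_k)}(\mu,\nu)\le\delta$ and $n\ge n_0$. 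Setting $\delta':=\min(\eta/2,\delta)$ therefore yields $\pi(\pr^\nu\circ\widehat T_n^{-1},\delta_{T(\mu)})\le\varepsilon/2$ for all such $\nu$ and $n$. Taking $\nu=\mu$ in the same argument (which is admissible because $d_{(\psi_k)}(\mu,\mu)=0\le\delta'$) supplies the corresponding bound for $\pr^\mu$, so that the triangle inequality with pivot $\delta_{T(\mu)}$ delivers the claim.

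Finally, the candidate neighborhood $U_{(\psi_k)}:=\{\nu\in M:d_{(\psi_k)}(\mu,\nu)<\delta'\}$ is open in the relative $(\psi_k)$-weak topology because $d_{(\psi_k)}$ metrizes ${\cal O}_{(\psi_k)}$ on ${\cal M}_1^{(\psi_k)}$ by Lemma \ref{Charakterisierung psi-schwache Topologie}. There is no serious obstacle; the only delicate point is bookkeeping the constants so that condition (b) is invoked with parameters compatible with the modulus of continuity from (a), and so that the pivot argument absorbs the factor two in the triangle inequality.
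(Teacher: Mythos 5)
Your proof is correct, but it takes a genuinely different route from the paper. The paper's proof of Lemma \ref{hampel-huber generalized} is essentially a citation: it invokes Theorem 2.1 of \cite{Zaehle2016} with $(\Upsilon,d_\Upsilon):=({\cal M},d_{(\psi_k)})$, $V_n:=T$, $U(\mu):=\mu$, and $\widehat U_n:=\widehat m_n$, after observing that the proof of that theorem only needs asymptotic continuity at the points of $M$ (not on all of ${\cal M}$) and never uses completeness or separability of $(\Upsilon,d_\Upsilon)$. You instead reconstruct the classical Hampel--Huber argument from scratch: insert the Dirac pivot $\delta_{T(\mu)}$, reduce via the elementary bound $\pi(\mathrm{law}(X),\delta_y)\le r$ whenever $\pr[d_\Sigma(X,y)>r]\le r$ to controlling the event $\{d_\Sigma(\widehat T_n,T(\mu))>\varepsilon/2\}$, transfer that event with the continuity modulus $\eta$ from (a) and the triangle inequality for $d_{(\psi_k)}$ into $\{d_{(\psi_k)}(\widehat m_n,\nu)\ge\eta/2\}$, and kill the latter uniformly over $\{\nu\in M:\,d_{(\psi_k)}(\mu,\nu)\le\delta'\}$ using (b); the case $\rho=\mu$ is covered by taking $\nu=\mu$, which is admissible since $\mu\in M$ and $d_{(\psi_k)}(\mu,\mu)=0$. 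Your bookkeeping is sound: $\delta'=\min(\eta/2,\delta)$ makes the triangle-inequality step and assumption (b) applicable simultaneously, the applicability of $T$ to $\widehat m_n$ is guaranteed by $\mathfrak{E}\subseteq{\cal M}$, measurability of $\boldsymbol{x}\mapsto d_{(\psi_k)}(\widehat m_n(\boldsymbol{x}),\mu)$ follows from the $(d_{E^n},d_{(\psi_k)})$-continuity of $(x_1,\ldots,x_n)\mapsto\widehat m_n$ noted in the paper, and your $n_0$ from (b) is exactly the $n_0$ permitted in the conclusion (the small $n$ are treated separately in the paper by Lemma \ref{hampel-huber generalized - finite sample}). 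What your route buys is self-containedness and transparency: it makes visible that the lemma is Hampel's classical argument run in the metric $d_{(\psi_k)}$. What the paper's route buys is generality and economy: Theorem 2.1 of \cite{Zaehle2016} covers estimators of the form $V_n(\widehat U_n)$ that need not be plug-ins with a fixed $T$, so the citation-based proof concentrates the work in one reusable statement rather than repeating the $\varepsilon/2$-pivot argument.
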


\begin{proof}
Note that the proof of Theorem 2.1 in \cite{Zaehle2016} still works when in assumption (a) of this theorem one only requires that the sequence $(V_n)$ is asymptotically $(d_\Upsilon,d_\Sigma)$-continuous at every point of $\Theta_0$ (and not on all of $\Theta$); take into account  that in the proof the asymptotic continuity of $(V_n)$ is used only subsequent to (41). Further note that in \cite{Zaehle2016} the assumption that the metric space $(\Upsilon,d_\Upsilon)$ be complete and separable is superfluous (and nowhere used). Then the claim follows by (the generalization of) Theorem 2.1 in \cite{Zaehle2016} with $(\Upsilon,d_\Upsilon):=({\cal M},d_{(\psi_k)})$, $U(\mu):=\mu$, $V_n:=T$ for all $n\in\N$, and $\widehat U_n(x_1,x_2,\ldots):=\widehat m_n(x_1,\ldots,x_n)$.
\end{proof}

For every $n\in\N$ we equip the $n$-fold product space $E^n$ with the product topology. Note that the corresponding Borel $\sigma$-field coincides with the $n$-fold product ${\cal B}(E)^{\otimes n}$ of the Borel $\sigma$-algebra ${\cal B}(E)$ on $E$, and let $\pi_n$ be any metric that metrizes the weak topology on the set of all probability measures on $(E^n,{\cal B}(E)^{\otimes n})$. Let $X_i$ be the $i$-th coordinate projection on $\Omega=E^\N$ and let $\widehat t_n:E^n\rightarrow\Sigma$ be the estimator $\widehat T_n$ regarded as a map on $E^n$; recall (\ref{Def Plug-in Estimator}) and note that $\widehat m_n(x)$ depends only on the first $n$ coordinates of $x=(x_1,x_2,\ldots)\in E^\N$.

\begin{lemma}\label{hampel-huber generalized - finite sample}
Let $M\subseteq{\cal M}$ and assume that the following two conditions hold:
\begin{itemize}
    \item[{\rm (c)}] $E^n\ni(x_1,\ldots,x_n)\mapsto\widehat t_n(x_1,\ldots,x_n)$ is continuous for every $n\in\N$.
    \item[{\rm (d)}] $M\ni\mu\mapsto\pr^\mu\circ(X_1,\ldots,X_n)^{-1}$ is $(d_{\rm w},\pi_n)$-continuous for every $n\in\N$.
\end{itemize}
Then for every $\mu\in M$, $n\in\N$, and $\varepsilon>0$ there exist and an open neighborhood $U_{(\psi_k)}=U_{(\psi_k)}(\mu,\varepsilon;M)$ of $\mu$ for the relative $(\psi_k)$-weak topology ${\cal O}_{(\psi_k)}\cap M$ such that
$$
    \nu\in U_{(\psi_k)}\quad\Longrightarrow\quad \pi(\pr^{\mu}\circ\widehat T_n^{\,-1},\pr^{\nu}\circ\widehat T_n^{\,-1})\le\varepsilon.
$$
\end{lemma}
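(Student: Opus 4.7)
The plan is to show that the map $M\ni\mu\mapsto\pr^\mu\circ\widehat T_n^{-1}$ is $(d_{\rm w},\pi)$-continuous on $M$, and then to exploit the fact that the $(\psi_k)$-weak topology on $M$ is finer than the relative weak topology: any weakly open neighborhood will automatically be $(\psi_k)$-weakly open, so a single continuity argument yields the required $U_{(\psi_k)}$.

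To establish continuity, I would first observe that, since $\widehat T_n(\boldsymbol{x})=\widehat t_n(x_1,\ldots,x_n)$ depends only on the first $n$ coordinates, we have the factorization
$$
    \pr^\mu\circ\widehat T_n^{-1}=\bigl(\pr^\mu\circ(X_1,\ldots,X_n)^{-1}\bigr)\circ\widehat t_n^{-1}.
$$
Next I would show that the pushforward map $\nu\mapsto\nu\circ\widehat t_n^{-1}$ is continuous from $(\mathcal{M}_1(E^n),\pi_n)$ to $(\mathcal{M}_1(\Sigma),\pi)$: given weak convergence $\nu_j\to\nu$ on $E^n$ and any bounded continuous $f:\Sigma\to\R$, the function $f\circ\widehat t_n$ is bounded continuous on $E^n$ by condition (c), so
$$
    \int f\,d(\nu_j\circ\widehat t_n^{-1})=\int f\circ\widehat t_n\,d\nu_j\longrightarrow\int f\circ\widehat t_n\,d\nu=\int f\,d(\nu\circ\widehat t_n^{-1}),
$$
which is precisely weak convergence in $\mathcal{M}_1(\Sigma)$. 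Combining this with the $(d_{\rm w},\pi_n)$-continuity supplied by (d), the composition $\mu\mapsto\pr^\mu\circ\widehat T_n^{-1}$ is $(d_{\rm w},\pi)$-continuous on $M$.

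Given $\mu\in M$, $n\in\N$, and $\varepsilon>0$, this continuity provides a weakly open neighborhood $U_{\rm w}$ of $\mu$ in $M$ such that $\pi(\pr^\mu\circ\widehat T_n^{-1},\pr^\nu\circ\widehat T_n^{-1})\le\varepsilon$ for every $\nu\in U_{\rm w}$. Because $\mathcal{O}_{(\psi_k)}\cap M$ is finer than $\mathcal{O}_{\rm w}\cap M$ (every continuous function in $\mathcal{C}_b(E)$ belongs to every $\mathcal{C}_{\psi_k}$), the set $U_{\rm w}$ is also open for the relative $(\psi_k)$-weak topology, so $U_{(\psi_k)}:=U_{\rm w}$ satisfies the claim. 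There is no genuine obstacle here: the argument is a routine application of the continuous mapping theorem together with a comparison of topologies; the only care needed is to verify that the pushforward by $\widehat t_n$ preserves weak convergence and to note explicitly that finer topologies have more open sets.
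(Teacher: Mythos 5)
Your proposal is correct. The argument is complete: the factorization $\pr^\mu\circ\widehat T_n^{-1}=\mu^{\otimes n}\circ\widehat t_n^{-1}$ is valid because $\widehat T_n$ depends only on the first $n$ coordinates, the continuous mapping step for the pushforward under $\widehat t_n$ is sound since condition (c) makes $f\circ\widehat t_n$ bounded and continuous for every bounded continuous $f$ on $\Sigma$, sequential arguments suffice because all topologies involved are metrizable (by $d_{\rm w}$, $\pi_n$, $\pi$), and the final passage from a weakly open to a $(\psi_k)$-weakly open neighborhood is legitimate because ${\cal C}_b(E)\subseteq{\cal C}_{\psi_k}$ for every $k$, so ${\cal O}_{(\psi_k)}\cap M$ refines ${\cal O}_{\rm w}\cap M$. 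Where you differ from the paper is in presentation rather than substance: the paper disposes of this lemma in one line by citing Theorem 2.5 and Example 2.6 of Z\"ahle (2016), which encapsulate precisely the continuity-of-the-law-of-the-estimator argument you spell out, whereas you give a self-contained proof. Your version has the advantage of making transparent that the lemma is really a statement about the \emph{weak} topology alone --- you in fact prove the stronger conclusion that $U$ can be taken weakly open, with the $(\psi_k)$-formulation entering only through the triviality that finer topologies have more open sets --- and that, unlike the companion Lemma \ref{hampel-huber generalized}, no uniform integrability or w-set structure of $M$ is needed for this fixed-$n$ statement. The citation route is of course shorter and situates the lemma within the general framework of Z\"ahle (2016), where the same mechanism is proved once for a broader class of estimators.
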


\begin{proof}
The lemma is a direct consequence of Theorem 2.5 and Example 2.6 in  \cite{Zaehle2016}.
\end{proof}

As already discussed at the beginning of the proof, it remains to show that conditions (a)--(d) are satisfied, where for (b) we have to assume that $M\subseteq{\cal M}$ is a w-set in ${\cal M}_1^{(\psi_k)}$.

(a): Condition (a) holds by assumption.

(b): To verify condition (b) for any fixed w-set $M\subseteq{\cal M}$ ($\subseteq{\cal M}_1^{(\psi_k)}$), we assume without loss of generality that the metric $d_{\rm w}$ in (\ref{Det psi k metric}) is given by the Prohorov metric $d_{\mbox{\scriptsize{\rm P}}}$, i.e.,
$$
    d_{(\psi_k)}(\mu,\nu)=d_{\mbox{\scriptsize{\rm P}}}(\mu,\nu)+\sum_{k=1}^\infty 2^{-k}\Big(\Big|\int\psi_k\,d\mu_1-\int\psi_k\,d\mu_2\Big|\wedge 1\Big).
$$
Let $\mu\in M$, $\varepsilon>0$, and $\eta>0$ be fixed. Choose $k_{\eta}\in\N$ so large such that $\sum_{k=k_\varepsilon+1}^\infty2^{-k}<\eta/3$. Then, for every $\nu\in{\cal M}$,
\begin{eqnarray}
    \lefteqn{\pr^{\nu}\big[d_{(\psi_k)}(\widehat m_n,\nu)\ge\eta\big]}\nonumber\\
    & \le & \pr^{\nu}\big[d_{(\psi_k)}(\widehat m_n,\nu)\ge\eta/3\big]+\sum_{k=1}^{k_\varepsilon}\pr^{\nu}\Big[2^{-k}\Big|\int\psi_k\,d\widehat m_n-\int\psi_k\,d\nu\Big|\ge\eta/(3k_\varepsilon)\Big].\nonumber
\end{eqnarray}
Lemma 4 in \cite{Mizera2010} shows that $\lim_{n\to\infty}\sup_{\nu\in{\cal M}_1}\pr^\nu[d_{\mbox{\scriptsize{\rm P}}}(\widehat m_n,\nu)\ge\eta]=0$. So we can find some $n_{\mbox{\scriptsize{\rm P}}}\in\N$ such that
\begin{equation}
    \sup_{\nu\in{\cal M}_1}\,\pr^\mu[d_{\mbox{\scriptsize{\rm P}}}(\widehat m_n,\nu)\ge\eta]\,\le\,\varepsilon/2\quad\mbox{ for all }n\ge n_{\mbox{\scriptsize{\rm P}}}.
\end{equation}
So it remains to show that for every $k=1,\ldots,k_\varepsilon$ there exist $\delta_k>0$ and $n_k\in\N$ such that
\begin{equation}\label{hampel-huber generalized - plug-in estimator - proof - 10}
    \nu\in M,~~ d_{(\psi_k)}(\mu,\nu)\le\delta_k~~\Longrightarrow~~\pr^{\nu}\Big[\Big|\int\psi_k\,d\widehat m_n-\int\psi_k\,d\nu\Big|\ge\frac{2^k\eta}{3k_\varepsilon}\Big]\le\frac{\varepsilon}{2k_\varepsilon}\mbox{ for all }n\ge n_k.
\end{equation}
By choosing $\delta:=\min\{\delta_1,\ldots,\delta_{k_\varepsilon}\}$ and $n_0:=\max\{n_{\mbox{\scriptsize{\rm P}}},n_1,\ldots,n_{k_\varepsilon}\}$ we then obtain (b).

To prove (\ref{hampel-huber generalized - plug-in estimator - proof - 10}), we take into account that $M$ is a w-set in ${\cal M}_1^{(\psi_k)}$. By Theorem~\ref{Key Prop} this means that $M$ is locally uniformly $(\psi_k)$-integrating. Thus for every $k\in\N$ we can find some $\delta_k>0$ and $a_k>0$ such that $\int\psi_k\eins_{\{\psi_k\ge a_k\}} d\nu<\min\{\frac{2^k\eta}{3k_\varepsilon};\frac{2^{k}\eta}{9k_\varepsilon}\frac{\varepsilon}{2}\}$ for all $\nu\in M$ with $d_{\mbox{\scriptsize{\rm P}}}(\mu_1,\mu_2)\le\delta$. For every $\nu\in M$ with $d_{\mbox{\scriptsize{\rm P}}}(\mu,\nu)\le\delta_k$ we then obtain
\begin{eqnarray}
    \lefteqn{\pr^{\nu}\Big[\Big|\int\psi_k\,d\widehat m_n-\int\psi_k\,d\nu\Big|\ge\frac{2^k\eta}{3k_\varepsilon}\Big]}\nonumber\\
    & \le & \pr^{\nu}\Big[\int\psi_k\eins_{\{\psi_k\ge a_k\}}\,d\widehat m_n\ge\frac{2^k\eta}{9k_\varepsilon}\Big]\nonumber\\
    & & +\,\pr^{\nu}\Big[\,\Big|\int\psi_k\eins_{\{\psi_k<a_k\}}\,d\widehat m_n-\int\psi\eins_{\{\psi<a_k\}}\,d\nu\Big|\ge\frac{2^k\eta}{9k_\varepsilon}\Big]\nonumber\\
    & & +\,\pr^{\nu}\Big[\int\psi_k\eins_{\{\psi_k\ge a_k\}}\,d\nu\ge\frac{2^k\eta}{9k_\varepsilon}\Big]\nonumber\\
    & =: & S_1(k,n,a_k)+S_2(k,n,a_k)+S_3(k,a_k),\nonumber
\end{eqnarray}
where $S_3(k,a_k)=0$ and $S_1(k,n,a_k)\le(9k_\varepsilon/(2^k\eta)\int \psi\eins_{\{\psi_k\ge a_k\}}d\nu\le\varepsilon/2$ for all $n\in\N$ (by Markov's inequality). Further, by Chebychev's inequality we can find some $n_k\in\N$ such that $S_2(k,n,a)\le\varepsilon/2$ for all $n\ge n_k$ (and all $\nu\in{\cal M}_1$). This proves (\ref{hampel-huber generalized - plug-in estimator - proof - 10}) with $d_{(\psi_k)}$ replaced by $d_{\mbox{\scriptsize{\rm P}}}$. Since $d_{\mbox{\scriptsize{\rm P}}}\le d_{(\psi_k)}$, we arrive at (\ref{hampel-huber generalized - plug-in estimator - proof - 10}).

(c): The mapping $(x_1,\ldots,x_n)\mapsto \widehat t_n(x_1,\ldots,x_n)=T(\frac{1}{n}\sum_{i=1}^n\delta_{x_i})$ is continuous, because the statistical functional $T$ is $(d_{(\psi_k)},d_\Sigma)$-continuous by assumption and the mapping $(x_1,\ldots,x_n)\mapsto\frac{1}{n}\sum_{i=1}^n\delta_{x_i}$ is easily seen to be $(d_{E^n},d_{(\psi_k)})$-continuous, where $d_{E_n}$ is any metric which metrizes the product topology on $E^n$.

(d): The $(d_{\rm w},\pi_{n})$-continuity of the mapping $M\ni\mu\mapsto\pr^\mu\circ(X_1,\ldots,X_n)^{-1}=\mu^{\otimes n}$ for every $n\in\N$ is obvious. too.
\hfill\proofendsign

%%%%%%%%%%%%%%%%%%%%%%%%%%%%%%%%%%%%%%%%%%%%%%%%%%%%%%%%%%%%%%%
%%%%%%%%%%%%%%%%%%%%%%%%%%%%%%%%%%%%%%%%%%%%%%%%%%%%%%%%%%%%%%%%

\subsubsection{Proof of part (ii)}

Now assume that $(\widehat T_n)$ is $(\psi_k)$-robust and weakly consistent. The $(\psi_k)$-robustness means that $(\widehat T_n)$ is robust on every w-set $M\subseteq{\cal M}$ ($\subseteq{\cal M}_1^{(\psi_k)}$). By the classical Hampel theorem (Theorem 1 in \cite{Cuevas1988}) we can conclude that $T|_{M}$ is weakly continuous for every w-set $M\subseteq{\cal M}$ ($\subseteq{\cal M}_1^{(\psi_k)}$). In the remainder we will show that this implies $(\psi_k)$-weak continuity of $T$. Let $\mu,\mu_1,\mu_2,\ldots\in \mathcal{M}$  such that $\mu_n\to\mu$ $(\psi_k)$-weakly.
%\red{\sout{ (in particular, $\mu_n\to\mu$ weakly). We have to show that $d_\Sigma(T(\mu_n),T(\mu))\to 0$.}}
Since $T|_{M}$ is $(d_{\rm w},d_\Sigma)$-continuous for every w-set $M\subseteq{\cal M}$ ($\subseteq{\cal M}_1^{(\psi_k)}$), it suffice to show that the set $M:=\{\mu,\mu_1,\mu_2,\ldots\}$ is a w-set in $M\subseteq{\cal M}$ ($\subseteq{\cal M}_1^{(\psi_k)}$). By assumption, the set $M$ is compact for the $(\psi_{k})$-weak topology since this topology is metrizable. Thus by Lemma \ref{relatively compact w set lemma} the set $M$ is also a w-set in $M\subseteq{\cal M}$. This completes the proof.
\hfill\proofendsign

%%%%%%%%%%%%%%%%%%%%%%%%%%%%%%%%%%%%%%%%%%%%%%%%%%%%%%%%%%%%%%%%
%%%%%%%%%%%%%%%%%%%%%%%%%%%%%%%%%%%%%%%%%%%%%%%%%%%%%%%%%%%%%%%%
%%%%%%%%%%%%%%%%%%%%%%%%%%%%%%%%%%%%%%%%%%%%%%%%%%%%%%%%%%%%%%%%
%%%%%%%%%%%%%%%%%%%%%%%%%%%%%%%%%%%%%%%%%%%%%%%%%%%%%%%%%%%%%%%%
%%%%%%%%%%%%%%%%%%%%%%%%%%%%%%%%%%%%%%%%%%%%%%%%%%%%%%%%%%%%%%%%
%%%%%%%%%%%%%%%%%%%%%%%%%%%%%%%%%%%%%%%%%%%%%%%%%%%%%%%%%%%%%%%%

\section{Remaining proofs}\label{Sec Remaining proofs}

%%%%%%%%%%%%%%%%%%%%%%%%%%%%%%%%%%%%%%%%%%%%%%%%%%%%%%%%%%%%%%%%
%%%%%%%%%%%%%%%%%%%%%%%%%%%%%%%%%%%%%%%%%%%%%%%%%%%%%%%%%%%%%%%%
%%%%%%%%%%%%%%%%%%%%%%%%%%%%%%%%%%%%%%%%%%%%%%%%%%%%%%%%%%%%%%%%

\subsection{Proof of Proposition~\ref{Aggregationsrobustheit}}\label{BeweisAggregationsrobustheit}

For every $i=1,\ldots,d$ we define $\mu_i':=\mu_i\circ f_{d,c}^{-1}$ for $f_{d,c}(x):=(d+1)cx$. By assumption (b) $\{\mu_1',\ldots,\mu_d'\}$ is a finite subset of
${\cal M}_{1}^{(\psi_{k})}$, and thus uniformly $(\psi_k)$-integrating. In view of de la Vall\'ee-Poussin theorem for sets of measures (analogue Theorem II.T22 in \cite{Meyer1966}) one can thus find for every $k\in\N$ a convex and increasing function $h_k:\R_+\rightarrow\R_+$ such that $\lim_{x\to\infty}h_k(x)/x=\infty$ and
\begin{equation}\label{BeweisAggregationsrobustheit - 10}
    \max_{i=1,\ldots,d}\int h_k(\psi_k)\,d\mu_i'\,<\,\infty.
\end{equation}
Since $\psi_k$ is convex and nonnegative, it is also nondecreasing on $[0,\infty)$. In addition $\psi_{k}$ is assumed to be even, so that the composition $h_k\circ\psi_k = h_{k}\circ\psi_{k}(|\cdot|)$ is convex. Together with assumption (a) and (\ref{BeweisAggregationsrobustheit - 10}) this yields
\begin{eqnarray*}
    \int h_k(\widetilde\psi_k)\,d\boldsymbol{\mu}\circ A_d^{-1}
    & = & \int h_k\circ\widetilde\psi_k(A_d(\boldsymbol{x}))\,\boldsymbol{\mu}(d\boldsymbol{x})\\
    & = & \int h_k\circ\psi_k(\|A_d(\boldsymbol{x})\|)\,\boldsymbol{\mu}(d\boldsymbol{x})\\
    & \le & \int h_k\circ\psi_k\Big(b+c\sum_{i=1}^d|x_i|\Big)\,\boldsymbol{\mu}(d\boldsymbol{x})\\
    & \le & \int\frac{1}{d+1}\sum_{i=0}^d h_k\circ\psi_k\big((d+1)cx_i\big)\,\boldsymbol{\mu}(d\boldsymbol{x})\\
    & \le & h_k\circ\psi_k\big((d+1)b\big)\,\vee\,\max_{j=1,\ldots,d}\int h_k\circ\psi_k\big((d+1)cx\big)\,\mu_j(dx)\\
    & = & h_k\circ\psi_k\big((d+1)b\big)\,\vee\,\max_{j=1,\ldots,d}\int h_k\circ\psi_k(x)\,\mu_j'(dx)~<~\infty
\end{eqnarray*}
for all $k\in\N$ and $\boldsymbol{\mu}\in \boldsymbol{{\cal M}}(d;N)$, where we used the convention $x_0:=b/c$. This implies
$$
    \sup_{\nu\in M}\int h_k(\widetilde\psi_k)\,d\nu\,<\,\infty
$$
for all $k\in\N$, and by another application of the de la Vall\'ee-Poussin theorem for sets of measures we can conclude that $M$ is uniformly $(\widetilde\psi_k)$-integrable.
\hfill\proofendsign

%%%%%%%%%%%%%%%%%%%%%%%%%%%%%%%%%%%%%%%%%%%%%%%%%%%%%%%%%%%%%%%%
%%%%%%%%%%%%%%%%%%%%%%%%%%%%%%%%%%%%%%%%%%%%%%%%%%%%%%%%%%%%%%%%
%%%%%%%%%%%%%%%%%%%%%%%%%%%%%%%%%%%%%%%%%%%%%%%%%%%%%%%%%%%%%%%%

\subsection{Proof of Theorem~\ref{contrastfunction}}

Let $(\mu_{n})_{n\in\N}$ be any sequence in $\cM$ such that $\cL_{\mu_{n}}(\theta_{k})\to \cL_{\mu}(\theta_{k})$ holds for every $k\in\N$. In particular the sequence $(\cL_{\mu_{n}})_{n\in\N}$ converges pointwise on a dense subset of $\Theta$ to $\cL_{\mu}$. Together with the concavity of $\cL_{\mu}$ and $\cL_{\mu_{n}}$, $n\in\N$, this implies that $(\cL_{\mu_{n}})_{n\in\N}$ converges even pointwise to $\cL_{\mu}$; cf.\ Corollary 7.18 in \cite{RockafellarWets1998}.

Further, by assumption $\argmax_{\theta\in\Theta}\cL_{\mu}(\theta) = \{T(\mu)\}$ and $T(\mu_{n})\in \argmax_{\theta\in\Theta}\,\cL_{\mu_{n}}(\theta)$ for every $n\in\N$. Since $(\cL_{\mu_{n}})_{n\in\N}$ is a sequence of concave maps which converges pointwise to the concave map $\cL_{\mu}$, we may draw on well-known results concerning stability of convex minimization (e.\,g.\ Theorem 5.3.25(f) in \cite{KosmolMueller-Wichards2011}) to conclude $T(\mu_{n})\to T(\mu)$. So the first part of Theorem~\ref{contrastfunction} is shown. The remaining part follows immediately from the first part, because convergence $\mu_{n}\to\mu$ w.r.t.\ the $(\psi_{k})$-weak topology implies $\cL_{\mu_{n}}(\theta_{k})\to \cL_{\mu}(\theta_{k})$ for every $k\in\N$. Now, the proof is complete.
\hfill\proofendsign

%%%%%%%%%%%%%%%%%%%%%%%%%%%%%%%%%%%%%%%%%%%%%%%%%%%%%%%%%%%%%%%%
%%%%%%%%%%%%%%%%%%%%%%%%%%%%%%%%%%%%%%%%%%%%%%%%%%%%%%%%%%%%%%%%
%%%%%%%%%%%%%%%%%%%%%%%%%%%%%%%%%%%%%%%%%%%%%%%%%%%%%%%%%%%%%%%%

\subsection{Proof of Theorem~\ref{StetigkeitRisikofunktional}}

It is known from Theorem 2.1.11 in \cite{EdgarSucheston1992} that the Orlicz heart $H^\Psi$ is a Banach space when endowed with the Luxemburg norm
$$
    \|X\|_{\Psi} := \inf\big\{\lambda > 0:\,\bE [\Psi(|X|/\lambda)]\leq 1\big\}.
$$
Moreover, we may observe that $\|X\|_{\Psi}\leq\|X\|_{\Psi}$ whenever $|X|\leq |Y|$ $\pr$-a.s. This means that $H^{\Psi}$ equipped with $\|\cdot\|_{\Psi}$ and the $\pr$-a.s.\ order is a Banach lattice. It follows by Proposition 3.1 in \cite{RuszczynskiShapiro2006} that
\begin{equation}\label{StetigkeitRisikomass}
    \rho\mbox{ is continuous w.r.t.}~\|\cdot\|_{\Psi}.
\end{equation}
The missing link between (\ref{StetigkeitRisikomass}) and Theorem~\ref{StetigkeitRisikofunktional} is provided by the following representation result which is interesting in its own right. Recall that $\OFP$ is atomless so that it supports a random variable which is uniformly distributed on the open unit interval.

\begin{theorem}\label{SDWn}
A sequence $(\mu_n)$ in $\cM_{1}^{(\psi_{k})}$ converges w.r.t.\ the $(\psi_{k})$-weak topology to some $\mu_0\in\cM_{1}^{(\psi_{k})}$ if and only if $ \|F^{\leftarrow}_{\mu_n}(U) - F^{\leftarrow}_{\mu_0}(U)\|_\Psi\to 0$, where $U$ is an arbitrary random variable on $(\Omega,\cF,\pr)$ that is uniformly distributed on (0,1).
\end{theorem}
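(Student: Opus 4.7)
\textbf{Proof proposal for Theorem~\ref{SDWn}.} Set $X_{n}:=F^{\leftarrow}_{\mu_n}(U)$ for $n\in\N_{0}$, so that $X_{n}$ has distribution $\mu_{n}$. Because of law-invariance, $\int\psi_{k}\,d\mu_{n}=\bE[\Psi(k|X_{n}|)]$, and by Lemma~\ref{Charakterisierung psi-schwache Topologie} the $(\psi_{k})$-weak convergence $\mu_{n}\to\mu_{0}$ is equivalent to the conjunction of $\mu_{n}\to\mu_{0}$ weakly and $\bE[\Psi(k|X_{n}|)]\to\bE[\Psi(k|X_{0}|)]$ for every $k\in\N$. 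The plan is to show that this conjunction is in turn equivalent to $\|X_{n}-X_{0}\|_{\Psi}\to 0$. Throughout we will rely on the standard fact (e.g.\ \cite[Prop.\,2.1.10]{EdgarSucheston1992}) that, for sequences in $H^{\Psi}$, Luxemburg-norm convergence is equivalent to modular convergence at every multiplier, i.e.\ $\|Y_{n}\|_{\Psi}\to 0$ iff $\bE[\Psi(\lambda|Y_{n}|)]\to 0$ for every $\lambda>0$.

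For the ``if'' direction, suppose $\|X_{n}-X_{0}\|_{\Psi}\to 0$. Because $L^{\Psi}$ embeds continuously into $L^{1}$, we obtain $X_{n}\to X_{0}$ in $L^{1}$, hence in distribution, which gives weak convergence of $\mu_{n}$ to $\mu_{0}$. For the moment convergence I would prove that the modular $\Phi_{k}(X):=\bE[\Psi(k|X|)]$ is continuous on $H^{\Psi}$. Using $|X_{n}|\le|X_{0}|+|X_{n}-X_{0}|$ together with the convex-combination bound
\[
 \Psi(k(\alpha+\beta))\,\le\,\tfrac{1}{1+\eps}\,\Psi(k(1+\eps)\alpha)+\tfrac{\eps}{1+\eps}\,\Psi\bigl(k\tfrac{1+\eps}{\eps}\beta\bigr),
\]
and the fact that $\bE[\Psi(c|X_{n}-X_{0}|)]\to 0$ for any $c>0$, one obtains $\limsup\Phi_{k}(X_{n})\le\tfrac{1}{1+\eps}\bE[\Psi(k(1+\eps)|X_{0}|)]$; dominated convergence as $\eps\downarrow 0$ (using $X_{0}\in H^{\Psi}$) yields $\limsup\Phi_{k}(X_{n})\le\Phi_{k}(X_{0})$. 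The symmetric estimate with the roles of $X_{n}$ and $X_{0}$ interchanged gives the matching lower bound.

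For the ``only if'' direction, assume $(\psi_{k})$-weak convergence. Since $\mu_{n}\to\mu_{0}$ weakly, $F^{\leftarrow}_{\mu_{n}}\to F^{\leftarrow}_{\mu_{0}}$ Lebesgue-a.e.\ on $(0,1)$, so $X_{n}\to X_{0}$ almost surely (and in probability). For every $k\in\N$ we have $\Psi(k|X_{n}|)\to\Psi(k|X_{0}|)$ in probability and $\bE[\Psi(k|X_{n}|)]\to\bE[\Psi(k|X_{0}|)]<\infty$; the Scheffé-type identity $f_{n}+f-|f_{n}-f|=2\min(f_{n},f)$ combined with Fatou's lemma then gives $\Psi(k|X_{n}|)\to\Psi(k|X_{0}|)$ in $L^{1}$. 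In particular, the family $(\Psi(k|X_{n}|))_{n}$ is uniformly integrable. Now for arbitrary $\lambda>0$ pick an integer $k\ge 2\lambda$; convexity and monotonicity of $\Psi$ yield
\[
 \Psi(\lambda|X_{n}-X_{0}|)\,\le\,\tfrac{1}{2}\Psi(2\lambda|X_{n}|)+\tfrac{1}{2}\Psi(2\lambda|X_{0}|)\,\le\,\tfrac{1}{2}\Psi(k|X_{n}|)+\tfrac{1}{2}\Psi(k|X_{0}|),
\]
so the family $(\Psi(\lambda|X_{n}-X_{0}|))_{n}$ is uniformly integrable. Since it converges to $0$ in probability, Vitali's theorem gives $\bE[\Psi(\lambda|X_{n}-X_{0}|)]\to 0$. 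As $\lambda>0$ was arbitrary, this is exactly Luxemburg-norm convergence $\|X_{n}-X_{0}\|_{\Psi}\to 0$.

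The main technical obstacle is the coordination of convexity estimates with uniform-integrability arguments in each direction: in the ``if'' part one must control the modular via $(1+\eps)$-splitting so as not to lose integrability, and in the ``only if'' part one must upgrade pointwise moment convergence to a genuine $L^{1}$-convergence of the Young-transformed variables in order to apply Vitali. Everything else is a routine translation between $\mu_{n}$ and $X_{n}=F^{\leftarrow}_{\mu_{n}}(U)$.
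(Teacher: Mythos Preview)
Your argument is correct and follows essentially the same route as the paper: both directions hinge on the Edgar--Sucheston equivalence between Luxemburg-norm convergence and modular convergence at every multiplier, the convexity bound $\Psi(k|a\pm b|)\le\tfrac12\Psi(2k|a|)+\tfrac12\Psi(2k|b|)$, and a Vitali/Scheff\'e-type passage between almost-sure convergence, moment convergence, and uniform integrability. The paper simply packages the ``if'' direction via uniform integrability of $(\psi_k(X_n))_n$ and Vitali, whereas you use an $(1+\eps)$-splitting; in the ``only if'' direction your Scheff\'e identity is exactly the content of the Vitali implication the paper invokes from \cite{Kallenberg1997}.

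One small point: in the ``if'' direction, the phrase ``the symmetric estimate with the roles of $X_n$ and $X_0$ interchanged gives the matching lower bound'' does not literally close. Interchanging gives $\Phi_k(X_0)\le\tfrac{1}{1+\eps}\liminf_n\bE[\Psi(k(1+\eps)|X_n|)]$, which is a lower bound on the \emph{wrong} modular (at level $k(1+\eps)$, not $k$), and letting $\eps\downarrow0$ collapses to a tautology. The clean fix---and what the paper's Vitali argument implicitly does---is to note that $\|X_n-X_0\|_\Psi\to0$ gives $X_n\to X_0$ in probability, hence $\Psi(k|X_n|)\to\Psi(k|X_0|)$ in probability, and Fatou's lemma (along a.s.\ convergent subsequences) yields $\liminf_n\Phi_k(X_n)\ge\Phi_k(X_0)$ directly.
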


\begin{proof}
We let $X_n:=F^{\leftarrow}_{\mu_n}(U)$ and  prove first that $\|X_n -X_0\|_\Psi\to 0$ implies that $\mu_n\to\mu_0$ in the $(\psi_{k})$-weak topology. By Proposition 2.1.10 in \cite{EdgarSucheston1992}, $\|X_n -X_0\|_\Psi\to 0$ yields $\bE[\psi_{2k}(X_n-X_0)]\to 0$ for all $k\in\bN$ and $X_n\to X_0$ in probability. Convexity and monotonicity of $\Psi$ imply that $0\le\psi_k(X_n))\le \frac12 \psi_{2k}(X_n-X_0)+\frac12\psi_{2k}(X_0)$. Hence, $\psi_k(X_n)$ is uniformly integrable, and we obtain by Vitali's theorem in the form of \cite[Proposition 3.12\,(iii)$\Rightarrow$(ii)]{Kallenberg1997} that
$$
    \int\psi_k(x)\,\mu_n(dx)=\bE[\psi_k(X_n)]\longrightarrow\bE[\psi_k(X_0)]=\int\psi_k(x)\,\mu_0(dx).
$$
Moreover, since $X_n\to X_0$ $\pr$-a.s., the corresponding laws $(\mu_n)$ converge weakly. It follows that $(\mu_n)$ converges to $\mu$ w.r.t.~the $(\psi_{k})$-weak topology.

Conversely, assume that $\mu_n\to\mu_0$ in the $(\psi_{k})$-weak topology. Then $\mu_n\to\mu_0$ weakly, and the continuity of $\Psi$ and the fact that $\Psi(0)=0$ yield that
\begin{align}
    \psi_k(X_n) \longrightarrow \psi_k(X_0) & \qquad \bP\mbox{-a.s. for all $k\ge0$,} \label{SDW - proof - eq - 1} \\
    \psi_k(X_n-X_0) \longrightarrow 0 & \qquad \bP\mbox{-a.s. for all $k\ge0$.}\label{SDW - proof - eq - 2}
\end{align}
Moreover, the  convergence   $\mu_n\to\mu_0$ w.r.t.~the $(\psi_{k})$-weak topology implies that
\begin{equation}\label{SDW - proof - eq - 3}
    \bE[\psi_k(X_n)]=\int\psi_k(x)\,\mu_n(dx)\longrightarrow \int\psi_k(x)\,\mu_0(dx)=\bE[\psi_k(X_0)].
\end{equation}
In particular each expectation $\bE[\psi_k(X_n)]$ is finite so that we have $X_n\in H^\Psi$. Now, \eqref{SDW - proof - eq - 1}, (\ref{SDW - proof - eq - 3}), and Vitali's theorem in the form of \cite[Proposition 3.12\,(ii)$\Rightarrow$(iii)]{Kallenberg1997} imply that the sequence $(\psi_k(X_n))_{n\in\N_{0}}$ is uniformly  integrable for every $k$. Since $\Psi$ is nondecreasing and convex  we obtain $\psi_k(X_{n}- X_{0})\le\frac{1}{2}\psi_{2k}( X_{0}) + \frac12\psi_{2k}( X_{0})$. Since the sequence $(\psi_{2k}(X_n))_{n\in\N_{0}}$ is uniformly integrable, we may thus conclude that the sequence $(\psi_{k}(X_n-X_0))_{n\in\N}$ is uniformly integrable. Therefore, \eqref{SDW - proof - eq - 2} and another application of Vitali's theorem, this time in the form of \cite[Proposition 3.12\,(iii)$\Rightarrow$(ii)]{Kallenberg1997}, yield $\bE[\psi_k(X_n-X_0)]\to 0$ for every $k>0$, which implies $\|X_n-X_0\|_\Psi\to 0$  according to Proposition 2.1.10 in \cite{EdgarSucheston1992}.
\end{proof}

\bigskip

{\bf Proof of Theorem~\ref{StetigkeitRisikofunktional}:} Since $\pr\circ(F^{\leftarrow}_{\nu}(U))^{-1}=\nu$ for every $\nu\in{\cal M}_1$, the asserted $(\psi_{k})$-weak continuity of the risk functional $\cR_{\rho}$ is an immediate consequence of (\ref{StetigkeitRisikomass}) and Theorem~\ref{SDWn}.
\hfill\proofendsign

%%%%%%%%%%%%%%%%%%%%%%%%%%%%%%%%%%%%%%%%%%%%%%%%%%%%%%%%%%%%%%%%
%%%%%%%%%%%%%%%%%%%%%%%%%%%%%%%%%%%%%%%%%%%%%%%%%%%%%%%%%%%%%%%%
%%%%%%%%%%%%%%%%%%%%%%%%%%%%%%%%%%%%%%%%%%%%%%%%%%%%%%%%%%%%%%%%
%%%%%%%%%%%%%%%%%%%%%%%%%%%%%%%%%%%%%%%%%%%%%%%%%%%%%%%%%%%%%%%%
%%%%%%%%%%%%%%%%%%%%%%%%%%%%%%%%%%%%%%%%%%%%%%%%%%%%%%%%%%%%%%%%
%%%%%%%%%%%%%%%%%%%%%%%%%%%%%%%%%%%%%%%%%%%%%%%%%%%%%%%%%%%%%%%%
%%%%%%%%%%%%%%%%%%%%%%%%%%%%%%%%%%%%%%%%%%%%%%%%%%%%%%%%%%%%%%%%
%%%%%%%%%%%%%%%%%%%%%%%%%%%%%%%%%%%%%%%%%%%%%%%%%%%%%%%%%%%%%%%%
%%%%%%%%%%%%%%%%%%%%%%%%%%%%%%%%%%%%%%%%%%%%%%%%%%%%%%%%%%%%%%%%

%%%%%%%%%%%%%%%%%%%%%%%%%%%%%%%%%%%%%%%%%%%%%%%%%%%%%%%%%%%%%%%%
%%%%%%%%%%%%%%%%%%%%%%%%%%%%%%%%%%%%%%%%%%%%%%%%%%%%%%%%%%%%%%%%
%%%%%%%%%%%%%%%%%%%%%%%%%%%%%%%%%%%%%%%%%%%%%%%%%%%%%%%%%%%%%%%%


\begin{thebibliography}{0}
    \bibitem{BickelFreedman1981} Bickel, P.J. and Freedman, D.A. (1981). Some asymptotic theory for the bootstrap. {\it Annals of Statistics}, 9, 1196--1217.
    \bibitem{CheriditoLi2009} Cheridito, P. and Li, T. (2009). Risk measures on Orlicz heart. {\it Mathematical Finance}, 19, 189--214.
    \bibitem{Contetal2010} Cont, R., Deguest, R. and Scandolo, G. (2010). {\it Robustness and sensitivity analysis of risk measurement procedures.} Quantitative Finance, 10, 593--606.
    \bibitem{Cuevas1988} Cuevas, A. (1988). Qualitative robustness in abstract inference. {\it Journal of Statistical Planning and Inference}, 18, 277--289.
    \bibitem{EdgarSucheston1992} Edgar, G. A. and Sucheston, L. (1992). {\it Stopping times and directed processes}. Cambridge University Press, Cambridge.
    \bibitem{Embrechtsetal2014} Embrechts, P., Wang, B. and Wang, R. (2014). Aggregation-robustness and model uncertainty of regulatory risk measures. Forthcoming in {\it Finance and Stochastics}.
    \bibitem{Engelking1989} Engelking, R. (1989). {\it General topology}. Heldermann, Berlin.
    \bibitem{FoellmerSchied2011} F\"ollmer, H. and Schied, A. (2011). {\it Stochastic finance. An introduction in discrete time}. de Gruyter, Berlin.
    \bibitem{Hampel1971} Hampel, F.R. (1971). A general qualitative definition of robustness. {\it Annals of Mathematical Statistics}, 42, 1887--1896.
    \bibitem{Huber1981} Huber, P.J. (1981). {\it Robust statistics}. Wiley, New York.
    \bibitem{Kallenberg1997} Kallenberg, O. (1997). {\it Foundations of modern probability.} Springer, New York.
   \bibitem{KosmolMueller-Wichards2011} Kosmol, P. and M\"uller-Wichards, D. (2011). {\it Optimization in function spaces}. de Gruyter, Berlin.
    \bibitem{Kraetschmeretal2012} Kr\"atschmer, V., Schied, A. and Z\"ahle, H. (2012). Qualitative and infinitesimal robustness of tail-dependent statistical functionals. {\it Journal of Multivariate Analysis}, 103, 35--47.
    \bibitem{Kraetschmeretal2014} Kr\"atschmer, V., Schied, A. and Z\"ahle, H. (2014). Comparative and qualitative robustness for law-invariant risk measures. {\it Finance and Stochastics}, 18, 271--295.
    \bibitem{McNeiletal2005} McNeil, A. J., Frey, R. and Embrechts, P. (2005). {\it Quantitative risk management}. Princeton University Press, Princeton.
    \bibitem{Meyer1966} Meyer, P. (1966). {\it Probability and Potentials}. Blaisdell Publishing Co., Waltham.
    \bibitem{Mizera2010} Mizera, I. (2010). Qualitative robustness and weak continuity: the extreme unction. {\it In:} J.~Antoch, M.~Hu{\v s}kova, P.K.~Sen (Eds.), IMS Collections Festschrift in honor of Professor Jana Jure{\v c}kov{\'a}: {\it Nonparametrics and robustness in modern statistical inference and time series analysis}. Institute of Mathematical Statistics, 169--181.
	\bibitem{RockafellarWets1998} Rockafellar, R.T. and Wets, J-B. (1998). {\it Variational analysis}. Springer, Berlin.
    \bibitem{Rueschendorf2013} R\"uschendorf, L. (2013). {\it Mathematical risk analysis}. Springer, Berlin.
    \bibitem{RuszczynskiShapiro2006} Ruszczynski, A. and  Shapiro, A. (2006). Optimization of convex risk functions. {\it Mathematics of Operations Research}, 31, 433--451.
    \bibitem{Schuhmacheretal2015} Schuhmacher, D., Sturm, A. and Z\"ahle, H. (2015). On qualitative robustness of the Lotka--Nagaev estimator for the offspring mean of a supercritical Galton--Watson process. Forthcoming in {\it Journal of Statistical Planning and Inference}.
    \bibitem{Zaehle2016} Z{\"a}hle, H. (2016). A definition of qualitative robustness for general point estimators, and examples. {\it Journal of Multivariate Analysis}, 143, 12--31.
\end{thebibliography}
\end{document}